\documentclass[review,onefignum,onetabnum]{siamart190516}


\usepackage[utf8]{inputenc}
\usepackage{amsmath}
\usepackage{amssymb}
\usepackage{mathtools}

\usepackage{tikz, pgfplots}
\usetikzlibrary{tikzmark,calc}
\usetikzlibrary{decorations.pathmorphing}
\pgfplotsset{width=7cm, compat=1.10}
\usepgfplotslibrary{fillbetween}
\usepgfplotslibrary{external} 
\tikzexternalize
\usepackage{bm}
\usepackage{soul}

\usepackage{hyperref}
\usepackage{booktabs}

\usepackage[square,sort,comma,numbers]{natbib}

\usepackage{lipsum}
\usepackage{amsfonts}
\usepackage{graphicx}
\usepackage{epstopdf}

\usepackage{algorithm2e}

\ifpdf
  \DeclareGraphicsExtensions{.eps,.pdf,.png,.jpg}
\else
  \DeclareGraphicsExtensions{.eps}
\fi


\newsiamremark{remark}{Remark}
\newsiamremark{hypothesis}{Hypothesis}
\crefname{hypothesis}{Hypothesis}{Hypotheses}
\newsiamthm{claim}{Claim}

\headers{Multilevel Stochastic Gradient for Optimal Control}{M. Martin, F. Nobile, and P. Tsilifis}

\title{A multilevel Stochastic Gradient method for {PDE}-constrained Optimal Control Problems with uncertain parameters\thanks{Submitted to SIAM Journal on Optimization, December 26, 2019.\funding{F. Nobile, P. Tsilifis, and M. Martin have received support from the Center for ADvanced MOdeling Science (CADMOS).}}}

\author{Matthieu Martin\thanks{Criteo, Parc Sud Galaxie, 38130 Echirolles, Grenoble, France
  (\email{matthieu.martin@epfl.ch})}
\and Fabio Nobile\thanks{CSQI, Institute of Mathematics,
  {\'E}cole Polytechnique F{\'e}d{\'e}rale de Lausanne, 1015 Lausanne,
  Switzerland 
  (\email{fabio.nobile@epfl.ch}, \email{panagiotis.tsilifis@epfl.ch}, \url{https://www.epfl.ch/labs/csqi/}).}
\and Panagiotis Tsilifis\footnotemark[3]
  }

\usepackage{amsopn}


\newtheorem{assumption}{Assumption}



\newcommand{\R}{\mathbb{R}}
\newcommand{\aaa}{\lambda}
\newcommand{\bbb}{\mu}
\newcommand{\aaab}{{\lambda}}
\newcommand{\bbbb}{{\mu}}
\newcommand{\E}{\mathbb{E}}
\newcommand{\V}{\mathbb{V}ar}

\newcommand\numberthis{\addtocounter{equation}{1}\tag{\theequation}}
\newcommand{\uu}{u^*}
\newcommand{\Erg}{E_{L, \overrightarrow{p}}^{\textrm{RMLMC}}}
\newcommand{\Er}{E_{L_j, \overrightarrow{p}^j}^{\textrm{RMLMC}}}
\newcommand{\Em}{E_{L_j, \overrightarrow{N}_j}^{\textrm{MLMC}}}
\newcommand{\Emi}{E_{L_i, \overrightarrow{N}_i}^{\textrm{MLMC}}}
\newcommand{\Emg}{E_{L, \overrightarrow{N}}^{\textrm{MLMC}}}
\newcommand{\Emeps}{E_{L(\epsilon), \overrightarrow{N}(\epsilon)}^{\textrm{MLMC}}}

\newcommand{\uuh}{u^{h*}}

\newcommand{\Sad}{\mathcal{S}_{ad}}


\DeclareMathOperator*{\argmin}{arg\,min}
\DeclareMathOperator*{\dive}{div}

\usepackage{xcolor}
\definecolor{mycolor1}{rgb}{0.00000,0.44700,0.74100}%
\definecolor{mycolor2}{rgb}{0.85000,0.32500,0.09800}%
\definecolor{darkgreen}{RGB}{0,102,34}
\definecolor{darkblue}{RGB}{57,61,125}

\begin{document}

\maketitle

\begin{abstract}
In this paper, we present a multilevel Monte Carlo (MLMC) version of the Stochastic Gradient (SG) method for optimization under uncertainty, in order to tackle Optimal Control Problems (OCP) where the constraints are described in the form of PDEs with random parameters. The deterministic control acts as a distributed forcing term in the random PDE and the objective function is an expected quadratic loss. We use a Stochastic Gradient approach to compute the optimal control, where the steepest descent direction of the expected loss, at each iteration, is replaced by independent MLMC estimators with increasing accuracy and computational cost. The refinement strategy is chosen a-priori such that the bias and, possibly, the variance of the MLMC estimator decays as a function of the iteration counter. Detailed convergence and complexity analyses of the proposed strategy are presented and asymptotically optimal decay rates are identified such that the total computational work is minimized. We also present and analyze an alternative version of the multilevel SG algorithm that uses a randomized MLMC estimator at each iteration. Our methodology is validated through a numerical example.

\end{abstract}

\begin{keywords}
PDE constrained optimization, optimization under uncertainty, PDE with random coefficients, stochastic approximation, stochastic gradient, multilevel Monte Carlo
\end{keywords}
%
\begin{AMS} 
	35Q93, 65C05, 65N12, 65N30
\end{AMS}


\section{Introduction}


Over the past few years, the use of stochastic optimization algorithms in various scientific computing areas has maintained an increasing momentum \cite{johnson_zhang, zinkevich, konecny} and, particularly, gradient-descent-type algorithms have been routinely used to solve robust optimization problems under uncertainty, that typically involve the optimization of an expected loss function. In their general formulation, these problems are stated as
$$\uu \in \argmin_{u \in U} J(u),$$ 
where the utility or loss functional $J(u)$ involves an expectation, that is  $$J(u)=\E_{\omega}[f(u, \omega)]$$
where $\omega \in \Gamma$ denotes a random elementary event and $f$ a random "loss" or objective functional. The classic Robbins-Monro Stochastic Gradient (SG) algorithm \cite{robbins1951} then writes:
$$u_{j+1}=u_{j}-\tau_j \nabla f(u_j, \omega_j)$$ where the sequence of $ \omega_j$ is independent and identically distributed (iid) and the step-size is usually chosen as $\tau_j=\tau_0/j$, with $\tau_0$ sufficiently large. In particular, when $u \mapsto f(u, \omega)$ is strongly convex for a.e. $\omega \in \Gamma$, with bounded or Lipschitz gradient, the SG algorithm guarantees an algebraic convergence rate on the mean squared error, i.e. $\E[\| u_j-\uu \|^2] \lesssim 1/j$, where $\uu$ denotes the exact optimal control.
In our setting, the evaluation of the loss functional $f(u_j, \cdot)$ involves the solution of a PDE, which can not be done exactly, except in very simple cases and requires a discretization step. This, in turn, induces an error in the evaluation of the loss functional, which can be kept small at the price of a high computational cost.

The multilevel Monte Carlo method has been proven to be very effective in reducing the cost of computing expectations of output functionals of differential models, compared to classic Monte Carlo estimators, by exploiting a hierarchy of discretizations, with increasing accuracy and computational costs. Essentially, the MLMC estimator performs the main bulk of the required computational work on coarse (and cheap) discretizations while a small number of evaluations performed on finer discretizations characterize the correction terms.

The multilevel paradigm has been introduced by Heinrich \cite{Heinrich1998} for parametric integration. It has been extended to weak approximations of stochastic differential equations (SDEs) in \cite{giles:MLMC} and has shown its efficiency, as a tool in numerical computations. 
The applicability of MLMC methods to uncertainty quantifiction problems, involving PDEs with random parameters, has been recently demonstrated along with extensive mathematical analysis \cite{Barth2011,Barth2013,Charrier2013,cliffe2011,Mishra2012,Teckentrup2013,nobile.tesei:multi}. 

In the most favorable cases, it has been shown that the cost $W$ of computing the expected value of some output quantity of a stochastic differential model with accuracy $tol$, scales as $W \lesssim tol^{-2}$, and does not see the cost of solving the problem on fine discretizations.
In this work, we use MLMC within a SG algorithm, by replacing the single realization $\nabla f (u_j, \omega_j)$ by a MLMC estimator, that builds on a hierarchy of finite element (FE) discretizations. In particular, we present a full convergence and complexity analysis of the resulting MLSG algorithm in the case of a quadratic, strongly convex, OCP. By reducing the bias and the variance of the MLMC estimator as a function of the iterations at a proper rate, we are able to recover in some cases an optimal complexity $W \lesssim tol^{-2}$ in the computation of the optimal control, analogous to the one for the computation of a single expectation. This result considerably improves the one in our previous work \cite{MartinKrumscheidNobile} where we have studied the SG method in which a single realization $\nabla f (u_j, \omega_j)$ is taken at each iteration, and computed on progressively finer discretization over the iterations. An alternative way to combine MLMC and Stochastic Gradient, although not in the context of PDE constrained OCPs, has been proposed in \cite{Frikha2016} and also leads to the optimal complexity $W \lesssim tol^{-2}$ in favorable cases. The key idea there is to build a sequence of coupled standard Robbins-Monro algorithms with different discretization levels in order
to construct a multilevel estimator of the optimal control. In this work we propose, instead, to construct a single SG algorithm that uses at each iteration a multilevel estimate for $\E[\nabla f(u, \cdot)]$, with increasing accuracy over the iterations. 

Our approach is similar to the one proposed and analyzed in \cite{ML} although it is particularized to the specific infinite-dimensional PDE-constrained OCP addressed in this work. Our analysis technique differs from the one in \cite{ML}, is applicable to infinite dimensional strongly convex optimization problems and leads to slightly different choices of bias and variance decay rates than those in \cite{ML}, although the final complexity results are comparable.

Furthermore, we also consider a randomized version of the MLSG algorithm, which uses the unbiased multilevel Monte Carlo algorithm proposed in  \cite{RheeGlynn2012, RheeGlynn2015} (see also \cite[Section 2.2]{Giles}). In this randomized version, named RMLSG, the full MLMC estimator is replaced by an estimator that consists of only one correction term that is evaluated on a single sample and is selected randomly, after assigning a probability mass function on the levels $\{\l_j\}_{j\geq 0}$. We show that RMLSG also achieves optimal complexity $W \lesssim tol^{-2}$ in some cases (the same as for MLSG). The main advantage of this randomized version, w.r.t. the one that uses a full MLMC estimator at each iteration, is that it requires less parameters to tune, which is preferable, from a numerical point of view.

We mention also two more recent works proposing MLMC methods for PDE constrained optimal control problems. The work \cite{ali_ullman} addresses the rather different situation of a pathwise optimization, where a different optimal control is computed for each realization of the random coefficients in the PDE by a standard deterministic optimization procedure. The multilevel Monte Carlo estimator is then used to compute the expectation of the (random) optimal control.
The work \cite{vanbarel} considers a very similar quadratic PDE constrained OCP as in this work, with an additional penalty term on the variance of the state. On the other hand, the MLMC estimator is not proposed withing a Stochastic Gradient framework, rather following a Sample Average Approximation approach (see \cite{ShapiroStochasticProgramming}) in which the MLMC estimator is used to approximate upfront the expected loss function and a (deterministic) nonlinear conjugate gradient method is then used to compute the optimal control of the problem thus discretized.


The outline of this paper is as follows. In Section \ref{sec2}, we present the problem setting, and recall some results on the existence and uniqueness of solutions to the optimal control problem, and its finite element approximation. Next, we review the main ingredients of multilevel Monte Carlo estimation. In Section \ref{MLMC-SG}, we introduce the multilevel Monte Carlo Stochastic Gradient method. Our algorithm uses a hierarchy of uniformly refined meshes where the mesh sizes form a geometric sequence. This is justified from the arguments in \cite{HajiNobileSchwerinTempone}. We discuss the optimal strategy to reduce the bias and variance of the MLMC estimator over the iterations and derive a complexity result.
In Section \ref{randomized} we present the randomized version of the MLSG algorithm and derive the corresponding complexity result.
Section \ref{numer} presents a numerical example, where the theoretical results from Sections \ref{MLMC-SG} and \ref{randomized} are validated. Finally, Section \ref{ccl} summarizes our conclusions and future perspectives of this work.


\section{Problem setting}\label{sec2}

We start by introducing the primal problem that will be part of the Optimal Control Problem (OCP) discussed next. We consider the problem of finding the solution $y: D \times \Gamma \rightarrow \R$ of the elliptic random PDE
\begin{equation}
\left\{
\begin{array}{rcll}
-\dive(a(x,\omega)\nabla y(x,\omega)) & = & g(x)+u(x), \hspace{10mm} & x \in D, \hspace{3mm} \omega \in \Gamma, \\
y(x,\omega) & = & 0,  \hspace{10mm} & x \in \partial D, \hspace{3mm} \omega \in \Gamma, 
\end{array}
\right.
\label{eq:primal}
\end{equation}
where $D \subset \R^d$ denotes the physical domain, assumed here to be open and bounded, and $(\Gamma, \mathcal{F}, \mathbb{P})$ is a complete probability space. The diffusion coefficient $a$ is a random field, $g$ is a deterministic source term and $u$ is the deterministic control. The solution of \eqref{eq:primal} for a given control $u$ will be equivalently denoted $y_{\omega}(u)$, or simply $y(u)$ in what follows. Let $U= L^2(D)$  be the set of all admissible control functions and $Y=H_0^1(D)$ the space of the solutions of \eqref{eq:primal} endowed with the norm $\|v\|_{H_0^1(D)}=\| \nabla v \|$ where $\|\cdot \|$ denotes the $L^2(D)$-norm induced by the inner product $\langle \cdot,\cdot \rangle$. The ultimate goal is to determine an optimal control $u^*$, in the sense that:
\begin{equation}
\uu \in \argmin_{u \in U} J(u), \quad \mathrm{s.t.} \quad y_{\omega}(u) \in Y \quad \mathrm{solves} \quad \eqref{eq:primal} \quad \mathrm{almost} \hspace{1mm} \mathrm{surely} \hspace{1mm} \mathrm{(a.s.)} \hspace{1mm} \mathrm{in} \hspace{1mm} \Gamma.
\label{eq:ocp}
\end{equation}
Here, $J(u):=\E[f(u, \cdot)]$ is the objective function with $f(u, \omega)=\frac{1}{2}\|y_{\omega}(u) - z_d\|^2 +\frac{\beta}{2}\|u\|^2$ and $z_d$ is the target function that we would like the state $y$ to approach as close as possible, in a mean squares sense. We use assumptions and results from \cite[Sections 2]{MartinKrumscheidNobile} to guarantee well posedness of \eqref{eq:ocp}.

\begin{assumption}
\label{as:4}
The diffusion coefficient $a \in L^{\infty}(D \times \Gamma)$ is bounded and bounded away from zero a.e. in $D \times \Gamma$, i.e.
\[
\exists \quad a_{\mathrm{min}}, a_{\mathrm{max}} \in \R \hspace{4mm} \mathrm{such} \hspace{1mm} \mathrm{that} \hspace{4mm} 0<a_{\mathrm{min}}\leq a(x,\omega) \leq  a_{\mathrm{max}} \hspace{4mm} \mathrm{a.e.} \hspace{1mm} \mathrm{in} \hspace{1mm} D \times \Gamma.
\]
\end{assumption}

\begin{assumption}
\label{as:1}
The regularization parameter $\beta$ is strictly positive, i.e.~$\beta > 0$, and the deterministic source term is such that $g \in L^2(D)$.
\end{assumption}

In what follows, we denote with $\nabla J(u)$ the $L^2(D)$-functional representation of the Gateaux derivative of $J$, which is defined as
\[
\int_D \nabla J(u) \delta u \hspace{1mm}\mathrm{d}x = \lim_{\epsilon \rightarrow 0} \frac{J(u+\epsilon \delta u)-J(u)}{\epsilon}, \quad \forall \delta u \in L^2(D).
\]
Then existence and uniqueness of the OCP \eqref{eq:ocp} can be stated as follows. 
\begin{theorem}
\label{theoremass}
\label{exist}
Under Assumptions \ref{as:4} and \ref{as:1},  the OCP \eqref{eq:ocp} admits a unique optimal control $\uu \in U$. Moreover
\begin{equation}\label{gradi}
\nabla J(u)=\E[\nabla f(u,\cdot)]\mathrm{\hspace{2mm} with \hspace{2mm}} \nabla f(u,\omega)=\beta u+p_{\omega}(u),
\end{equation}
%
where $p_{\omega}(u)=p$ is the solution of the adjoint problem (a.s. in $\Gamma$)
\begin{equation}
\left\{
\begin{array}{rcll}
-\dive(a(\cdot,\omega)\nabla p(\cdot,\omega)) & = & y(\cdot,\omega)-z_d \hspace{10mm} & \mathrm{in}\hspace{1mm} D, \\
p(\cdot,\omega) & = & 0  \hspace{10mm} &  \mathrm{on}\hspace{1mm} \partial D. 
\end{array}
\right.
\label{eq:dual}
\end{equation}
\end{theorem}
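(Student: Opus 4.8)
The plan is to treat \eqref{eq:ocp} by the direct method of the calculus of variations, exploiting the affine dependence of the state on the control together with the strong convexity induced by the Tikhonov term $\tfrac{\beta}{2}\|u\|^2$, and then to derive the gradient by the standard adjoint calculation.

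First I would establish well-posedness of the primal problem \eqref{eq:primal}. For a.e.\ fixed $\omega$, Assumption \ref{as:4} guarantees that the bilinear form $(v,w)\mapsto\int_D a(\cdot,\omega)\nabla v\cdot\nabla w\,\dd x$ is continuous and coercive on $H_0^1(D)$, uniformly in $\omega$, with constants $a_{\mathrm{max}}$ and $a_{\mathrm{min}}$. Lax--Milgram then yields a unique weak solution $y_\omega(u)\in Y$ satisfying the a priori bound $\|y_\omega(u)\|_{H_0^1(D)}\le C_P a_{\mathrm{min}}^{-1}(\|g\|+\|u\|)$, with $C_P$ the Poincaré constant; this bound is uniform in $\omega$, so $y_\omega(u)\in L^2(\Gamma;Y)$ and the tracking term is finite. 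Since \eqref{eq:primal} is linear in the data $g+u$, the map $u\mapsto y_\omega(u)$ is affine, i.e.\ $y_\omega(u)=y_\omega(0)+S_\omega u$, where $S_\omega:L^2(D)\to Y$ is the bounded linear solution operator associated with the right-hand side $u$.

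Next I would argue existence and uniqueness of the minimizer. As $u\mapsto y_\omega(u)$ is affine, $u\mapsto\tfrac12\|y_\omega(u)-z_d\|^2$ is convex for a.e.\ $\omega$, so after taking expectations the first term of $J$ is convex; adding $\tfrac{\beta}{2}\|u\|^2$ with $\beta>0$ (Assumption \ref{as:1}) makes $J$ strongly convex on $U=L^2(D)$ with modulus $\beta$. Moreover, since the tracking term is nonnegative, $J(u)\ge\tfrac{\beta}{2}\|u\|^2$, so $J$ is coercive, and being convex and continuous it is weakly lower semicontinuous. Since $L^2(D)$ is reflexive, the direct method yields a minimizer, and strong convexity makes it unique; this is $\uu$.

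Finally I would derive \eqref{gradi} via the adjoint. Differentiating in a direction $\delta u$, the Tikhonov term contributes $\beta u$ in its $L^2$-representation. For the tracking term, the directional derivative of the state is $\delta y_\omega=S_\omega\delta u$, solving $-\dive(a\nabla\delta y)=\delta u$, so the derivative of $\tfrac12\|y_\omega(u)-z_d\|^2$ equals $\langle y_\omega(u)-z_d,\delta y_\omega\rangle$. Introducing the adjoint state $p_\omega(u)$ solving \eqref{eq:dual}, testing its weak form against $\delta y_\omega$ and the $\delta y_\omega$-equation against $p_\omega$, the symmetry of the scalar operator $-\dive(a\nabla\cdot)$ gives $\langle y_\omega(u)-z_d,\delta y_\omega\rangle=\langle p_\omega(u),\delta u\rangle$. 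Hence the pointwise gradient is $\nabla f(u,\omega)=\beta u+p_\omega(u)$. The main technical point, which I expect to be the delicate part rather than these formal computations, is to justify interchanging differentiation with the expectation, i.e.\ $\nabla J(u)=\E[\nabla f(u,\cdot)]$: I would dominate the difference quotients using the uniform-in-$\omega$ a priori bounds of the first step (which likewise give $p_\omega(u)\in L^2(\Gamma;Y)$) and invoke dominated convergence.
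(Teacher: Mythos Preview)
Your argument is correct and follows the standard route: Lax--Milgram for the pathwise primal problem with uniform-in-$\omega$ bounds, the direct method together with the $\beta$-strong convexity for existence and uniqueness, and the adjoint calculation for the gradient representation. Note, however, that the paper does not supply its own proof of this theorem; it is stated as a result imported from \cite{MartinKrumscheidNobile} (see the sentence immediately preceding Assumption~\ref{as:4}), so there is no in-paper proof against which to compare yours. The steps you outline are precisely those one would expect in that reference, and the point you single out---justifying the interchange $\nabla\E=\E\nabla$ via uniform a~priori bounds and dominated convergence---is indeed the only place where something beyond formal manipulation is required.
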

We recall also the weak formulation of \eqref{eq:primal}, which reads
\begin{equation}
\label{primmal}
\mathrm{find} \hspace{1mm} y_{\omega} \in Y \hspace{1mm} s.t. \hspace{1mm} b_{\omega}(y_{\omega},v)=\langle g+u,v \rangle  \quad \forall v \in Y \quad \quad \mathrm{for \hspace{1mm} a.e.} \hspace{1mm} \omega \in \Gamma,
\end{equation} 
where  $b_{\omega}(y,v):= \int_D a(x,\omega) \nabla y(x) \nabla v(x) dx$.
Similarly, the weak form of the adjoint problem \eqref{eq:dual} reads:
\begin{equation}
\label{dual}
\mathrm{find} \hspace{1mm} p_{\omega} \in Y \hspace{1mm} s.t. \hspace{1mm}b_{\omega}(v,p_{\omega})=\langle v,y_{\omega}-z_d \rangle  \quad \forall v \in Y \quad \quad \mathrm{for \hspace{1mm} a.e.} \hspace{1mm} \omega \in \Gamma.
\end{equation}
We can thus rewrite the OCP \eqref{eq:ocp} equivalently as:
\begin{equation} \label{eqn:mother1}
\left\{ 
\arraycolsep=1.0pt\def\arraystretch{1}
\begin{array}{l}
\min_{u \in  U}J(u) :=\frac{1}{2}\E[\| y_{\omega}(u)-z_d \|^2]+\frac{\beta}{2}\| u  \|^2 \\
\mathrm{s.t.} \quad y_{\omega}(u) \in Y \quad \mathrm{solves}\\
b_{\omega}(y_{\omega}(u),v)=\langle g+u,v \rangle  \quad \forall v \in Y \quad \quad \mathrm{for \hspace{1mm} a.e.} \hspace{1mm} \omega \in \Gamma.
\end{array}
\right.
\end{equation}
Following \cite{MartinKrumscheidNobile},  we now recall two regularity results about Lipschitz continuity and strong convexity of $f$ in the particular setting of the problem considered here.
\begin{lemma}[Lipschitz continuity]
\label{lemma:Lip}
The random functional $f$ is such that:
\begin{equation}
\label{IL}
\|\nabla f(u,\omega)-\nabla f(v,\omega)\| \leq Lip \| u-v \| \quad \forall u,v \in U \mathrm{\hspace{1mm}and \hspace{1mm} a.e.\hspace{1mm}} \omega \in \Gamma,
\end{equation}
with $Lip=\beta+\frac{C_p^4}{a_{min}^2}$, where $C_p$ is the Poincar\'e constant, $C_p=\sup_{v \in Y\setminus\{0\}} \frac{\| v \|}{\| \nabla v \|}$.
\end{lemma}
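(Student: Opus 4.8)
The plan is to start from the explicit representation of the gradient proved in Theorem \ref{exist}, namely $\nabla f(u,\omega)=\beta u + p_{\omega}(u)$, so that
\[
\nabla f(u,\omega)-\nabla f(v,\omega) = \beta(u-v) + \big(p_{\omega}(u)-p_{\omega}(v)\big).
\]
The triangle inequality then contributes the term $\beta\|u-v\|$ and reduces the claim to controlling the difference of adjoint states $\|p_{\omega}(u)-p_{\omega}(v)\|$ by $\|u-v\|$, with constant $C_p^4/a_{min}^2$ uniformly in $\omega$. Since the control enters everything affinely, I expect this to follow by chaining two a priori estimates along the dependency $u \mapsto y_{\omega}(u) \mapsto p_{\omega}(u)$.

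First I would estimate the state sensitivity. Subtracting the weak formulation \eqref{primmal} written for $u$ and for $v$ shows that $r:=y_{\omega}(u)-y_{\omega}(v)$ solves $b_{\omega}(r,w)=\langle u-v,w\rangle$ for all $w\in Y$, the source $g$ dropping out. Testing with $w=r$, bounding the left side by coercivity $b_{\omega}(r,r)\geq a_{min}\|\nabla r\|^2$ (Assumption \ref{as:4}) and the right side by Cauchy--Schwarz together with the Poincaré inequality $\|r\|\leq C_p\|\nabla r\|$, I obtain $\|\nabla r\|\leq (C_p/a_{min})\|u-v\|$ and hence, applying Poincaré a second time, $\|y_{\omega}(u)-y_{\omega}(v)\|\leq (C_p^2/a_{min})\|u-v\|$.

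Next I would repeat the identical argument at the adjoint level. Subtracting \eqref{dual} for the two states shows that $q:=p_{\omega}(u)-p_{\omega}(v)$ solves $b_{\omega}(w,q)=\langle w,\,y_{\omega}(u)-y_{\omega}(v)\rangle$ for all $w\in Y$, with the target $z_d$ now cancelling; the right-hand side is exactly the state difference just bounded. Testing with $w=q$ and running the same coercivity/Poincaré estimate gives $\|p_{\omega}(u)-p_{\omega}(v)\|\leq (C_p^2/a_{min})\|y_{\omega}(u)-y_{\omega}(v)\|$. Composing the two estimates yields $\|p_{\omega}(u)-p_{\omega}(v)\|\leq (C_p^4/a_{min}^2)\|u-v\|$, and substituting back produces the stated constant $Lip=\beta+C_p^4/a_{min}^2$.

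The argument has no genuine obstacle: both maps are affine in $u$, so the two Lax--Milgram-type estimates simply compose and their constants multiply. The only points needing care are that both bounds hold with constants independent of $\omega$ (guaranteed by the uniform lower bound $a_{min}$), and that keeping the state and adjoint \emph{differences} measured in the $L^2(D)$-norm $\|\cdot\|$ while coercivity naturally lives in the $H^1_0$-seminorm $\|\nabla\cdot\|$ is precisely what accounts for the two extra factors of $C_p$ beyond the single coercivity constant $a_{min}$, leading to the fourth power $C_p^4$ in the final constant.
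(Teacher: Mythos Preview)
Your argument is correct and is exactly the standard one: use the explicit form $\nabla f(u,\omega)=\beta u+p_\omega(u)$, then chain two Lax--Milgram a priori estimates along $u\mapsto y_\omega(u)\mapsto p_\omega(u)$, each contributing a factor $C_p^2/a_{min}$ in the $L^2$-norm. The paper itself does not give a proof of this lemma; it simply recalls it from \cite{MartinKrumscheidNobile}, and your proof is precisely the argument one finds there.
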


\begin{lemma}[Strong convexity] \label{lemma:SC}
The random functional $f$ is such that:
\begin{equation}
\label{ISC}
\frac{l}{2} \| u-v \|^2 \leq \langle \nabla f(u,\omega)-\nabla f(v,\omega) , u-v \rangle \quad \forall u,v \in U \mathrm{\hspace{1mm}and \hspace{1mm} a.e.\hspace{1mm}} \omega \in \Gamma,
\end{equation}
with $l=2 \beta$.
\end{lemma}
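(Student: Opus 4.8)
The plan is to start from the explicit gradient representation \eqref{gradi} proved in Theorem \ref{exist}, namely $\nabla f(u,\omega)=\beta u + p_\omega(u)$, and to split the bilinear pairing into a regularization part and an adjoint part. Writing $\delta u = u-v$, we have
\[
\langle \nabla f(u,\omega)-\nabla f(v,\omega),\, u-v\rangle = \beta\|\delta u\|^2 + \langle p_\omega(u)-p_\omega(v),\, u-v\rangle,
\]
so the regularization term already supplies exactly $\beta\|\delta u\|^2 = \frac{l}{2}\|\delta u\|^2$ with $l=2\beta$. It therefore suffices to show that the adjoint contribution $\langle p_\omega(u)-p_\omega(v),\, u-v\rangle$ is nonnegative, and the claim follows.

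The key observation is that both the primal map $u\mapsto y_\omega(u)$ and the adjoint map $u\mapsto p_\omega(u)$ are affine in $u$, so their increments are governed by homogeneous problems. Setting $\delta y = y_\omega(u)-y_\omega(v)$ and $\delta p = p_\omega(u)-p_\omega(v)$, subtracting the weak formulations \eqref{primmal} written for $u$ and for $v$ shows that $\delta y$ solves $b_\omega(\delta y, w)=\langle \delta u, w\rangle$ for all $w\in Y$ (the terms involving $g$ cancel), while subtracting the adjoint formulations \eqref{dual} shows that $\delta p$ solves $b_\omega(w,\delta p)=\langle w,\delta y\rangle$ for all $w\in Y$ (the terms involving $z_d$ cancel). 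I would then combine these two identities by suitable test-function choices: taking $w=\delta p$ in the primal increment equation gives $b_\omega(\delta y,\delta p)=\langle \delta u,\delta p\rangle$, whereas taking $w=\delta y$ in the adjoint increment equation gives $b_\omega(\delta y,\delta p)=\langle \delta y,\delta y\rangle=\|\delta y\|^2$. Equating the two yields $\langle \delta u,\delta p\rangle = \|\delta y\|^2\geq 0$, which is precisely the needed nonnegativity.

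The main (and essentially only) delicate point is this bookkeeping of test functions together with the symmetry of $b_\omega$, which stems from the symmetry of $a\nabla y\cdot\nabla v$ in its two arguments: it is the symmetry that makes the two occurrences of $b_\omega(\delta y,\delta p)$ coincide so that the cross term can be identified with $\|\delta y\|^2$. Everything else reduces to the linearity of the state and adjoint equations and elementary algebra of the inner product. The resulting estimate is in fact sharp, since the state-tracking part of $f$ contributes the nonnegative quantity $\|\delta y\|^2$ and only the regularization term provides the strictly positive lower bound, explaining the value $l=2\beta$.
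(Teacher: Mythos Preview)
Your proof is correct. The paper does not actually prove Lemma~\ref{lemma:SC} in the text; it merely recalls the result from \cite{MartinKrumscheidNobile}, so there is no in-paper argument to compare against. Your decomposition into the regularization part $\beta\|\delta u\|^2$ and the adjoint part $\langle \delta p,\delta u\rangle$, followed by the cross-testing of the primal and adjoint increment equations to identify $\langle \delta p,\delta u\rangle=\|\delta y\|^2\ge 0$, is precisely the standard argument for this type of quadratic tracking problem and is presumably the one given in the cited reference.
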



\subsection{Finite Element approximation}
In order to compute numerically an optimal control we consider a Finite Element (FE) approximation of the infinite dimensional OCP \eqref{eqn:mother1}.
Let us denote by $\{ \tau_h \}_{h>0}$ a family of regular triangulations of $D$ and  choose $Y^h$ to be the space of continuous piece-wise polynomial functions of degree at most $r$ over $\tau_h$ that vanish on $\partial D$, i.e. $Y^h=\{ y \in C^0(\overline{D}): y\vert_K \in \mathbb{P}_r(K),\quad \forall K \in \tau_h, y \vert_{\partial D}=0 \} \subset Y$, and $U^h=Y^h$. We reformulate the OCP \eqref{eqn:mother1} as a finite dimensional OCP in the FE space:

\begin{equation}\label{eqn:discr2}
\left\{ 
\arraycolsep=1pt\def\arraystretch{1}
\begin{array}{l}
\min_{u^h \in  U^{h}}J^h(u^h) :=\frac{1}{2}\E[\| y^h_{\omega}(u^h)-{z_d} \|^2]+\frac{\beta}{2}\| u^h  \|^2 \\
\mathrm{s.t.\hspace{1mm}} y_{\omega}^h \in Y^h \mathrm{\hspace{1mm} and}\\
b_{\omega}(y^h_{\omega}(u^h),v^h)=\langle u^h+g,v^h \rangle \quad \forall v^h \in Y^h \quad \mathrm{for \hspace{1mm} a.e. \hspace{1mm}} \omega \in \Gamma.
\end{array}
\right.
\end{equation}
Under the following regularity assumption on the domain and diffusion coefficient:
\begin{assumption}
\label{as:2}
The domain $D \subset \R^d$ is polygonal convex and the random field $a\in L^{\infty}(D \times \Gamma)$ is such that $\nabla a \in L^{\infty}(D \times \Gamma)$,
\end{assumption} 
\noindent the following error estimate has been obtained in \cite{MartinKrumscheidNobile}. In order to lighten the notation, we omit the subscript $\omega$ in $y_{\omega}(\cdot)$ and $p_{\omega}(\cdot)$ from now on.
\begin{theorem}
\label{c1}
Let $\uu$ be the solution to the optimal control problem \eqref{eqn:mother1}, and denote by $\uuh$ the solution to its finite element counterpart \eqref{eqn:discr2}. Suppose that $y(\uu), p(\uu) \in L_{\mathbb{P}}^2(\Gamma; H^{r+1}(D))$ and Assumption \ref{as:2} holds; then
\begin{multline}
\label{febound}
\| \uu-\uuh \|^2+  \E[\| y(\uu)-y^h(\uuh) \|^2]+h^2 \E[\| y(\uu)-y^h(\uuh) \|_{H^1_0}^2]\\ 
\leq A_1 h^{2r+2} \left(\E[|y(\uu)|^2_{H^{r+1}}]+\E[| p(\uu) |^2_{H^{r+1}}]\right),
\end{multline}
with a constant $A_1$ independent of $h$.
\end{theorem}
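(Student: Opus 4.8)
The plan is to decouple the three error contributions by first controlling the control error $\|\uu-\uuh\|$ through the $\beta$-strong convexity of $J$ (Lemma~\ref{lemma:SC}) and the optimality systems, and then propagating it to the state errors via standard finite element estimates. I would start from the first-order conditions: since $U=L^2(D)$ is unconstrained, the continuous optimum satisfies $\beta\uu+\E[p(\uu)]=0$ in $L^2(D)$, while the discrete optimum satisfies $\langle \beta\uuh+\E[p^h(\uuh)],v^h\rangle=0$ for all $v^h\in U^h$, with $p^h(\uuh)$ the discrete adjoint associated with the discrete state $y^h(\uuh)$. I would also introduce the auxiliary discrete state and adjoint at the \emph{continuous} optimal control, $\tilde y^h:=y^h(\uu)$ and $\tilde p^h:=p^h(\uu)$, so that $\|y(\uu)-\tilde y^h\|$ and $\|p(\uu)-\tilde p^h\|$ become pure finite element errors for fixed right-hand sides.

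The crux is the control error. Using $\beta\uu=-\E[p(\uu)]$, the $L^2$-orthogonal projection $\Pi_h$ onto $U^h$, and the discrete optimality condition (which annihilates the $U^h$-component $\Pi_h\uu-\uuh$), a direct computation yields the \emph{identity}
\[
\beta\|\uu-\uuh\|^2 = -\,\E\big[\langle p(\uu)-p^h(\uuh),\,\uu-\uuh\rangle\big],
\]
where the terms $\langle\beta\uuh,\uu-\Pi_h\uu\rangle$ and $\langle\E[p^h(\uuh)],\uu-\Pi_h\uu\rangle$ vanish because $\uu-\Pi_h\uu\perp U^h$ while both $\uuh$ and $\E[p^h(\uuh)]$ lie in $U^h$. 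Splitting $p(\uu)-p^h(\uuh)=(p(\uu)-\tilde p^h)+(\tilde p^h-p^h(\uuh))$ and setting $\delta y:=\tilde y^h-y^h(\uuh)$, $\delta p:=\tilde p^h-p^h(\uuh)$, the key observation is that $\delta y$ and $\delta p$ solve the discrete state and adjoint equations with data $\uu-\uuh$ and $\delta y$; testing the first with $\delta p$ and the second with $\delta y$ gives the pathwise identity $\langle\uu-\uuh,\delta p\rangle=\|\delta y\|^2\ge 0$. Hence this contribution has a favourable sign and can be dropped, leaving
\[
\beta\|\uu-\uuh\|^2 \le -\E\big[\langle p(\uu)-\tilde p^h,\,\uu-\uuh\rangle\big] \le \big(\E\|p(\uu)-\tilde p^h\|^2\big)^{1/2}\|\uu-\uuh\|,
\]
so that $\|\uu-\uuh\|\le \beta^{-1}\big(\E\|p(\uu)-\tilde p^h\|^2\big)^{1/2}$.

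It then remains to bound the purely finite element quantity $\E\|p(\uu)-\tilde p^h\|^2$. Introducing the discrete adjoint $\bar p^h$ computed from the \emph{exact} state $y(\uu)$, I would split $p(\uu)-\tilde p^h=(p(\uu)-\bar p^h)+(\bar p^h-\tilde p^h)$, estimate the first by the standard $L^2$ error for the adjoint equation and the second (which solves a discrete problem with data $y(\uu)-\tilde y^h$) by $\|y(\uu)-\tilde y^h\|$. Under Assumption~\ref{as:2} the domain is convex and $\nabla a\in L^\infty$, so the dual problems enjoy $H^2$-regularity and the Aubin--Nitsche duality argument delivers the superconvergent rates $\|p(\uu)-\bar p^h\|\lesssim h^{r+1}|p(\uu)|_{H^{r+1}}$ and $\|y(\uu)-\tilde y^h\|\lesssim h^{r+1}|y(\uu)|_{H^{r+1}}$, giving $\|\uu-\uuh\|^2\lesssim h^{2r+2}\big(\E|y(\uu)|^2_{H^{r+1}}+\E|p(\uu)|^2_{H^{r+1}}\big)$.

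Finally I would propagate this to the state errors through $y(\uu)-y^h(\uuh)=(y(\uu)-\tilde y^h)+\delta y$: the first term is controlled by the finite element estimates ($O(h^{r+1})$ in $L^2$, $O(h^r)$ in $H^1_0$), while $\delta y$, solving the discrete state equation with data $\uu-\uuh$, satisfies $\|\delta y\|_{H^1_0}\le (C_p/a_{\mathrm{min}})\|\uu-\uuh\|$ and $\|\delta y\|\le (C_p^2/a_{\mathrm{min}})\|\uu-\uuh\|$ by coercivity and Poincar\'e, both $O(h^{r+1})$. Collecting the three bounds, and noting that the factor $h^2$ reconciles the $O(h^r)$ $H^1_0$-error with the $O(h^{r+1})$ terms, yields the claim. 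I expect the control-error estimate to be the main obstacle: making the sign-definite coupling $\langle\uu-\uuh,\delta p\rangle=\|\delta y\|^2$ explicit is precisely what avoids an otherwise uncontrolled $\|\uu-\uuh\|^2$ term on the right-hand side, and the superconvergent $O(h^{r+1})$ rate (rather than $O(h^r)$) rests essentially on the convexity of $D$ entering through Aubin--Nitsche duality.
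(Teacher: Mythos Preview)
The paper does not give its own proof of this theorem; it is simply quoted from \cite{MartinKrumscheidNobile}. Your argument is correct and follows the classical route for finite element analysis of linear--quadratic optimal control problems: the central device is the sign-definite identity $\langle \uu-\uuh,\tilde p^h-p^h(\uuh)\rangle=\|\tilde y^h-y^h(\uuh)\|^2\ge 0$, which lets you discard the coupling between control and discrete adjoint and reduce $\|\uu-\uuh\|$ to the pure Galerkin error $\E\|p(\uu)-p^h(\uu)\|^2$, then handled by Aubin--Nitsche duality under the convexity and coefficient regularity of Assumption~\ref{as:2}. This is essentially the Falk-type argument used in \cite{MartinKrumscheidNobile}, so there is no meaningful difference in approach.

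One point worth making explicit in your write-up: the constants hidden in the Aubin--Nitsche and stability bounds depend only on $a_{\min}$, $a_{\max}$ and $\|\nabla a\|_{L^\infty(D\times\Gamma)}$, hence are uniform in $\omega$ by Assumptions~\ref{as:4} and~\ref{as:2}. This uniformity is what allows you to pass from the pathwise estimates to the expectations and to conclude that $A_1$ does not depend on $h$ (or on the realization).
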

Notice that the OCP \eqref{eqn:discr2} can be equivalently formulated in $U$ instead of $U^h$:
\begin{equation}\label{eqn:discr3}
\left\{ 
\arraycolsep=1pt\def\arraystretch{1}
\begin{array}{l}
\min_{u \in  U}J^h(u) :=\frac{1}{2}\E[\| y^h_{\omega}(u)-{z_d} \|^2]+\frac{\beta}{2}\| u  \|^2 \\
\mathrm{s.t.\hspace{1mm}} y_{\omega}^h \in Y^h \mathrm{\hspace{1mm} and}\\
b_{\omega}(y^h_{\omega}(u),v^h)=\langle u+g,v^h \rangle \quad \forall v^h \in Y^h, \quad \mathrm{for \hspace{1mm} a.e. \hspace{1mm}} \omega \in \Gamma.
\end{array}
\right.
\end{equation}
Indeed, if we decompose any $u \in U$ into $u=u_h+w$ with $u_h \in U^h$ and $\langle w,v^h \rangle=0, \quad \forall v^h \in U^h$, it follows that 
\begin{equation}
J^h(u)=\frac{1}{2}\E[\| y^h_{\omega}(u^h)-{z_d} \|^2]+\frac{\beta}{2}\| u^h  \|^2+\frac{\beta}{2}\| w  \|^2,
\end{equation}
so clearly the optimal control $\widetilde{u}^*$ of \eqref{eqn:discr3} satisfies $\widetilde{u}^*=u^{h*}+w^*$ with $w^*=0$ and $u^{h*}$ solution of \eqref{eqn:discr2}, i.e. the optimal control of \eqref{eqn:discr3} is indeed a FE function in $U^h$. For later developments it will be more convenient to consider the formulation \eqref{eqn:discr3} rather than \eqref{eqn:discr2}.\\
Following analogous developments as in \cite{MartinKrumscheidNobile}, it is straightforward to show that $\forall u \in U$
\begin{equation} \label{FEgradient}
\nabla J^h(u)=\E[\nabla f^h(u,\cdot)]\mathrm{\hspace{2mm} with \hspace{2mm}} \nabla f^h(u,\omega)=\beta u+p_{\omega}^h(u) \in U,
\end{equation}
where $p_{\omega}^h(u)$ solves the FE adjoint problem which reads
\begin{equation}
\mathrm{find} \hspace{1mm} p_{\omega}^h(u) \in Y^h \hspace{1mm} s.t. \hspace{1mm}b_{\omega}(v^h,p^h_{\omega}(u))=\langle v^h,y_{\omega}^h(u)-z_d \rangle  \quad \forall v^h \in Y^h, \quad  \mathrm{for \hspace{1mm} a.e.} \hspace{1mm} \omega \in \Gamma,
\end{equation}
and $y_{\omega}^h(u)$ solves the primal problem formulated in \eqref{eqn:discr3}. Moreover, $\nabla f^h(u,\cdot)$ satisfies Lipschitz and convexity properties analogous to those in Lemmas \ref{lemma:Lip} and \ref{lemma:SC}, with the same constants: 
\begin{lemma}[Lipschitz continuity and strong convexity]\label{lemma:Lipconv_h}
For any triangulation $\tau_h$, $h\geq 0$, the random functional $f^h$ satisfies
$$\| \nabla f ^{h}(u,\omega)-\nabla f ^{h}(v,\omega) \| \leq Lip \| u-v \| \quad \forall u,v \in U, \quad\mathrm{for\hspace{1mm} a.e. \hspace{1mm}} \omega \in \Gamma.$$
and 
$$ \frac{l}{2} \| u-v \|^2 \leq \langle u-v,  \nabla f ^{h}(u,\omega)-\nabla f ^{h}(v,\omega) \rangle  \quad \forall u,v \in U, \quad \mathrm{for\hspace{1mm} a.e. \hspace{1mm}} \omega \in \Gamma$$
with constants $Lip$ and $l$ as in Lemmas \ref{lemma:Lip} and \ref{lemma:SC} that are independent of $h$.
\end{lemma}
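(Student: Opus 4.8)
The plan is to reproduce, at the discrete level, the arguments behind Lemmas \ref{lemma:Lip} and \ref{lemma:SC}, exploiting the fact that the FE primal and adjoint problems share the same Galerkin structure as their continuous counterparts. Since $\nabla f^h(u,\omega)=\beta u+p_\omega^h(u)$ depends linearly on $u$ through two linear problems, I would first introduce $\delta y:=y_\omega^h(u)-y_\omega^h(v)$ and $\delta p:=p_\omega^h(u)-p_\omega^h(v)$. By linearity, $\delta y\in Y^h$ solves $b_\omega(\delta y,w^h)=\langle u-v,w^h\rangle$ for all $w^h\in Y^h$, and $\delta p\in Y^h$ solves $b_\omega(w^h,\delta p)=\langle w^h,\delta y\rangle$ for all $w^h\in Y^h$; moreover $\nabla f^h(u,\omega)-\nabla f^h(v,\omega)=\beta(u-v)+\delta p$.

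For the strong convexity estimate I would expand $\langle u-v,\nabla f^h(u,\omega)-\nabla f^h(v,\omega)\rangle=\beta\|u-v\|^2+\langle u-v,\delta p\rangle$ and show the second term is nonnegative by a duality argument. Taking $w^h=\delta p$ in the primal equation gives $\langle u-v,\delta p\rangle=b_\omega(\delta y,\delta p)$, while taking $w^h=\delta y$ in the adjoint equation gives $b_\omega(\delta y,\delta p)=\|\delta y\|^2\geq 0$. Hence $\langle u-v,\nabla f^h(u,\omega)-\nabla f^h(v,\omega)\rangle=\beta\|u-v\|^2+\|\delta y\|^2\geq\beta\|u-v\|^2$, which is the claimed bound with $l=2\beta$. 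The key is that $\delta y,\delta p\in Y^h$ are themselves admissible test functions, so no projection error enters and the constant is exactly the continuous one.

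For the Lipschitz estimate I would bound $\|\delta p\|$ in terms of $\|u-v\|$ by chaining two coercivity-plus-Poincar\'e steps. Testing the adjoint equation with $w^h=\delta p$ and using $a_{min}\|\nabla\delta p\|^2\leq b_\omega(\delta p,\delta p)$ together with $\|\delta p\|\leq C_p\|\nabla\delta p\|$ yields $\|\delta p\|\leq (C_p^2/a_{min})\|\delta y\|$; testing the primal equation with $w^h=\delta y$ the same way gives $\|\delta y\|\leq (C_p^2/a_{min})\|u-v\|$. Composing the two bounds gives $\|\delta p\|\leq (C_p^4/a_{min}^2)\|u-v\|$, and the triangle inequality $\|\nabla f^h(u,\omega)-\nabla f^h(v,\omega)\|\leq\beta\|u-v\|+\|\delta p\|$ then produces the Lipschitz bound with $Lip=\beta+C_p^4/a_{min}^2$.

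The only point requiring care, and the reason the constants are independent of $h$, is the nestedness $Y^h\subset Y$. Because of it, the Poincar\'e inequality holds on $Y^h$ with the same constant $C_p=\sup_{v\in Y\setminus\{0\}}\|v\|/\|\nabla v\|$ (a supremum over a subspace cannot exceed the one over $Y$), and the coercivity bound from Assumption \ref{as:4} holds verbatim for $b_\omega$ restricted to $Y^h$. I therefore expect no genuine obstacle beyond verifying that every test function invoked, namely $\delta y$ and $\delta p$, indeed lies in $Y^h$, which is automatic from the linearity of the discrete problems; the estimates are otherwise word-for-word the continuous ones, and in particular uniform in $h$.
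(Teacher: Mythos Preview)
Your proposal is correct. The paper does not actually supply a proof of this lemma; it is stated immediately after the expression \eqref{FEgradient} for $\nabla f^h$ with only the remark that the properties are ``analogous to those in Lemmas \ref{lemma:Lip} and \ref{lemma:SC}, with the same constants'', those continuous lemmas themselves being recalled without proof from \cite{MartinKrumscheidNobile}. Your argument is exactly the expected one: it mirrors the continuous proof line by line, using that $\delta y,\delta p\in Y^h$ are admissible test functions in the Galerkin formulations and that $Y^h\subset Y$ inherits the Poincar\'e constant $C_p$ and the coercivity lower bound $a_{\min}$ unchanged. Both the duality identity $\langle u-v,\delta p\rangle=b_\omega(\delta y,\delta p)=\|\delta y\|^2$ for strong convexity and the chained stability estimates for the Lipschitz bound are clean and give precisely the stated constants $l=2\beta$ and $Lip=\beta+C_p^4/a_{\min}^2$.
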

We conclude this section by stating an error bound on the FE approximation of the functional $f$ when evaluated at the optimal control  $\uu$.
\begin{lemma} \label{lemma:bias} Let $\uu$ be the solution of the OCP \eqref{eqn:mother1} and assume $y(\uu), p(\uu) \in L^2_{\mathbb{P}}(\Gamma, H^{r+1}(D))$. Then, there exists $C(\uu)>0$ such that, $\forall h>0$:
\begin{equation*}
\E[\| \nabla f ^{h}(\uu,\cdot)-\nabla f (\uu, \cdot) \|^2] \leq C(\uu)h^{2r+2}.
\end{equation*}
\end{lemma}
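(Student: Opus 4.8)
The plan is to observe that the $\beta u$ contributions in \eqref{gradi} and \eqref{FEgradient} cancel, so that the quantity to be estimated is purely an adjoint discretization error:
$$\nabla f^h(\uu,\omega)-\nabla f(\uu,\omega)=p^h_\omega(\uu)-p_\omega(\uu).$$
Hence it suffices to prove the pathwise bound $\|p^h_\omega(\uu)-p_\omega(\uu)\|^2\leq C\, h^{2r+2}\big(|y_\omega(\uu)|^2_{H^{r+1}}+|p_\omega(\uu)|^2_{H^{r+1}}\big)$ for a.e.\ $\omega$, with a deterministic constant $C$; integrating in $\omega$ and invoking the assumption $y(\uu),p(\uu)\in L^2_{\mathbb P}(\Gamma,H^{r+1}(D))$ then yields the claim with $C(\uu)=C\,\E[|y(\uu)|^2_{H^{r+1}}+|p(\uu)|^2_{H^{r+1}}]$.

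The difficulty in estimating $p^h_\omega(\uu)-p_\omega(\uu)$ is that the discrete adjoint $p^h_\omega(\uu)$ is driven by the discrete state $y^h_\omega(\uu)-z_d$, whereas the exact adjoint $p_\omega(\uu)$ is driven by $y_\omega(\uu)-z_d$, so the data of the two problems differ. To separate the two error mechanisms I would introduce the auxiliary function $\widetilde p^h_\omega\in Y^h$ solving the discrete adjoint problem with the \emph{exact} data, i.e.\ $b_\omega(v^h,\widetilde p^h_\omega)=\langle v^h, y_\omega(\uu)-z_d\rangle$ for all $v^h\in Y^h$, and split
$$p^h_\omega(\uu)-p_\omega(\uu)=\big(p^h_\omega(\uu)-\widetilde p^h_\omega\big)+\big(\widetilde p^h_\omega-p_\omega(\uu)\big).$$
The second term is the genuine Galerkin error of the adjoint equation with fixed right-hand side $y_\omega(\uu)-z_d$; since $D$ is convex polygonal and $\nabla a\in L^\infty$ (Assumption \ref{as:2}), the problem enjoys $H^2$ elliptic regularity, and the Aubin--Nitsche duality argument gives the optimal $L^2$ rate $\|\widetilde p^h_\omega-p_\omega(\uu)\|\leq C h^{r+1}|p_\omega(\uu)|_{H^{r+1}}$, using the assumed regularity $p_\omega(\uu)\in H^{r+1}(D)$.

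For the first term, $e^h:=p^h_\omega(\uu)-\widetilde p^h_\omega\in Y^h$ solves $b_\omega(v^h,e^h)=\langle v^h, y^h_\omega(\uu)-y_\omega(\uu)\rangle$ for all $v^h\in Y^h$, which is a purely data-driven perturbation with no further consistency error. Testing with $v^h=e^h$, using coercivity $b_\omega(e^h,e^h)\geq a_{\min}\|\nabla e^h\|^2$, the Cauchy--Schwarz inequality, and the Poincar\'e inequality $\|e^h\|\leq C_p\|\nabla e^h\|$ twice, I obtain the stability estimate $\|e^h\|\leq \tfrac{C_p^2}{a_{\min}}\|y^h_\omega(\uu)-y_\omega(\uu)\|$. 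Thus this term is controlled by the $L^2$ finite element error of the primal state at the fixed control $\uu$, which, again by Assumption \ref{as:2} and Aubin--Nitsche, satisfies $\|y^h_\omega(\uu)-y_\omega(\uu)\|\leq C h^{r+1}|y_\omega(\uu)|_{H^{r+1}}$.

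Combining the two bounds by the triangle inequality gives the desired pathwise estimate, and the proof is completed by squaring and taking expectations. The only genuine obstacle is the data mismatch in the adjoint problem, which is precisely what the intermediate solution $\widetilde p^h_\omega$ resolves; everything else is a standard application of C\'ea/Aubin--Nitsche $L^2$ error estimates, for which the convexity of $D$ and Lipschitz regularity of $a$ (Assumption \ref{as:2}) supply the $H^2$ regularity needed to reach the full order $h^{r+1}$. Note in particular that, unlike Theorem \ref{c1}, all quantities here are evaluated at the \emph{continuous} optimal control $\uu$, so the estimate does not follow from \eqref{febound} and must be derived from the primal and adjoint FE estimates directly.
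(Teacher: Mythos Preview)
Your argument is correct. The cancellation of the $\beta u^*$ terms, the splitting via the auxiliary discrete adjoint $\widetilde p^h_\omega$ with exact data, the stability bound $\|e^h\|\le \tfrac{C_p^2}{a_{\min}}\|y^h_\omega(u^*)-y_\omega(u^*)\|$, and the two Aubin--Nitsche estimates (relying on Assumption~\ref{as:2} for uniform-in-$\omega$ $H^2$ regularity) are all sound, and the passage from the pathwise bound to the expectation is immediate under the integrability hypothesis on $y(u^*),p(u^*)$.

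Regarding comparison with the paper: the paper does not actually prove this lemma here; it simply refers to \cite[Corollary~1]{MartinKrumscheidNobile}. What you have written is precisely the standard argument one expects to underlie that reference, and your closing remark that the estimate is distinct from \eqref{febound} (since here both $p^h$ and $p$ are evaluated at the \emph{same} continuous control $u^*$, not at $u^{h*}$ and $u^*$ respectively) is a useful clarification.
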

\begin{proof}
A proof of this result can be found for example in \cite[Corollary 1]{MartinKrumscheidNobile}.
\end{proof}


\subsection{Multilevel Monte Carlo (MLMC) estimator}
\label{MLMC}
The OCP \eqref{eqn:mother1}, or its FE approximation \eqref{eqn:discr3}, involves computing the expectation of the cost functional $f(u, \omega)$. For this, in this work, we consider a Multilevel Monte Carlo estimator.
The key idea of MLMC \cite{Giles} is to estimate the mean of a random quantity $P := P(\omega)$ (in our case the last functional $f(u, \cdot)$), by simultaneously using Monte Carlo (MC) estimators on several approximations $P_l$, $l=0, \dots, L$ of $P$ of increasing accuracy (in our context these will be F.E. approximations on a hierarchy of computational meshes $\tau_{h_{\ell}}$ with mesh sizes $h_L < \dots < h_0$). Considering $N_l$ iid  realizations $\omega_{l,i}$, $i=1, \dots, N_l$ of the system's random input parameters on each level $0 \leq l \leq L$, the MLMC estimator for the expected value $\E[P]$ is given by
\begin{align} \label{Em}
\Emg [P]&=\frac{1}{N_{0}}\sum_{i=1}^{N_{0}}P_0(\omega_{0,i})+\sum_{l=1}^{L}\frac{1}{N_{l}}\sum_{i=1}^{N_{l}}\left( P_l(\omega_{l,i})-P_{l-1}(\omega_{l,i}) \right)\\
&=\sum_{l=0}^{L}\frac{1}{N_{l}}\sum_{i=1}^{N_{l}}\left( P_l(\omega_{l,i})-P_{l-1}(\omega_{l,i}) \right),
\end{align}
where we have set $P_{-1}=0$, and $\overrightarrow{N}=\{ N_{0}, \dots, N_{{L}} \}$.
We recall the main complexity result from \cite{Giles}. 
Denote by $V_l$ the variance of $P_l-P_{l-1}$ that is $V_l=\V[P_l-P_{l-1}]$ and by $C_l$ the expected cost of generating one realization of $(P_l,P_{l-1})$, $C_l=\E[Cost(P_l(\omega_{l,i}),P_{l-1}(\omega_{l,i}))]$.
If there exist positive constants $q_w, q_s, q_c, c_w, c_s, c_c$ such that $q_w \geq \frac{1}{2} \min(q_s, q_c)$ and
\begin{align}
&\big\vert\E[P_l-P]\big\vert \leq c_w 2^{- q_w l},\\
&V_l \leq c_s 2^{-q_s l},\\
& C_l \leq c_c 2^{q_c l},
\end{align}
\noindent then there exists a positive constant $c_0$ such that for any $\epsilon < e^{-1}$ there are values $L=L(\epsilon)$ and $N_l=N_l(\epsilon)$, $l=0, \dots, L(\epsilon)$, for which the multilevel estimator \eqref{Em} has a mean-squared-error MSE with bound
$$MSE := \E[\big\vert \Emeps[P] -\E[P] \big\vert^2] =  \big\vert \E[P_{L(\epsilon)} - P]\big\vert^2 + \sum_{\ell = 0}^L \frac{V_l}{N_l}$$
smaller or equal to $tol^2$ and an expected computational cost $C=\sum_{l=0}^L C_l N_l$ bounded by
$$C \leq \begin{cases}
      c_0 tol^{-2}, & \text{if}\ q_s>q_c, \\
      c_0 tol^{-2} \left(\log \epsilon \right)^2, & \text{if}\ q_s=q_c,\\
      c_0 tol^{-2-(q_c-q_s)/q_w}, & \text{if}\ q_s<q_c.
    \end{cases}$$
For given variances $V_l$, the optimal number of samples at level $l$ that minimize the cost $C_{l}$, is given by the formula:
$$N_{l}^{*}(\epsilon)=\left\lceil tol^{-2}\sqrt{\frac{V_l}{C_l}}\sum_{k=0}^{L(\epsilon)}\sqrt{V_kC_k}\right\rceil.$$


\section{Multilevel Stochastic Gradient (MLSG) algorithm}
\label{MLMC-SG}

First introduced by Robbins and Monro in 1961, Stochastic Approximation (SA) techniques, such as Stochastic Gradient (SG) \cite{robbins1951, PolJud, ShapiroStochasticProgramming, LeRouxBach, DieuBach} are popular iterative techniques in the machine learning community that are used to minimize an expected loss function $J(u) = \E[f(u)]$ over an admissible set $U$. The classic version of such a method, the so-called Robbins-Monro (RM) method, works as follows. Within the steepest descent algorithm, the exact gradient $\nabla J = \nabla \E[f]= \E[\nabla f]$\footnote{the fact that the gradient and the expectation can be exchanged for the OCP \eqref{eq:ocp} has been proven in \cite{MartinKrumscheidNobile}} is replaced by $\nabla f(\cdot, \omega_j )$, where the random variable $\omega_j$ (i.e. a random point $\omega_j \in \Gamma$ with distribution $\mathbb{P}$) is re-sampled independently at each iteration of the steepest-descent method:
\begin{equation} \label{SAiter}
u_{j+1} = u_j - \tau_j \nabla f(u_j , \omega_j ).
\end{equation}
Here, $\tau_j$ is the step-size of the algorithm and is decreasing as $\tau_0/(j + \alpha)$, where $\alpha$ denotes a shift parameter that is typically chosen in $\mathbb{N}$, with $\alpha = 0$ corresponding to the usual setting. The RM method applied to the OCP \eqref{eqn:mother1} has been analyzed in \cite{MartinKrumscheidNobile}. In this paper, we consider a generalization of this method, in which the "point-wise" gradient $\nabla f(\cdot, \omega_j )$ is replaced by a MLMC estimator, which is independent of the ones used in the previous iterations. Specifically, we introduce a hierarchy of refined meshes  $\tau_{h_0},\tau_{h_1}, \dots $ with    $ h_0>h_1>  \dots $ and corresponding FE spaces $Y^{h_l}$, $l=0, 1, \dots$, and $U^{h_l}=Y^{h_l}$ and use, at each iteration, the MLMC estimator defined in Section \ref{MLMC}, $\Em[P(u_j)]$, where $P(u)=\nabla f (u, \cdot)$ and $P_l(u)=\nabla f^{h_l}(u,\cdot)$. The total number of levels $L_j$ and samples per level $\overrightarrow{N}_j=\{ N_{j,0}, N_{j,1}, \dots, N_{{j,L_j}} \}$ are allowed to depend on $j$. In particular, we require $L_j$ and $N_{j,l}$ to be non decreasing sequences in $j$ and $L_j \rightarrow \infty$ as $j \rightarrow \infty$. For simplicity of exposition, we study only sequences of \textit{nested} meshes of size $h_l= h_0 2^{-l}$.
This implies, in particular, $Y^{h_{l-1}}\subset Y^{h_l}$, $\forall l \geq 1$. In this restricted context, the MLSG optimization procedure reads:
\begin{align*}\label{eq:MLSG_alg}
u_{j+1}&=u_j-\tau_j \Em [\nabla f(u_j,\cdot)],\\
&=u_j-\tau_j\left(\beta u_j+ \Em [p(u_j,\cdot)]\right) \quad \forall j \geq 1, \quad u_1 \in Y^{h_0},\numberthis
\end{align*}
with
\begin{equation} \label{MLMCestim}
\Em [p(u_j,\cdot)]=\sum_{l=0}^{L_j}\frac{1}{N_{j,l}} \sum_{i=1}^{N_{j,l}}\left( p ^{h_l}(u_j,\omega_{l,i}^{j})-p^{h_{l-1}}(u_j,\omega_{l,i}^{j}) \right).
\end{equation}
Notice that $u_j \in Y^{h_{L_{j-1}}}$ for any $j \geq 1$. This can be shown by a simple induction argument noticing that if $u_j \in Y^{h_{L_{j-1}}}$, then $\widehat{\nabla J}_{MLSG}:=\Em[\nabla f (u_j, \cdot)] \in Y^{h_{L_j}}$ which implies $u_{j+1} \in Y^{h_{L_j}}$ thanks to the nestedness assumption, $Y^{h_{p}}\subset Y^{h_{q}}$ if $p \leq q$.
\begin{remark}
In the case where non-nested meshes were used, we would need to introduce suitable interpolation operators $I_{h_p}^{h_q}:Y^{h_p} \rightarrow Y^{h_q}$ with $p<q$ and  modify the MLSG algorithm as follows
\begin{equation*}
u_{j+1}=(1-\beta \tau_j)I^{h_{L_{j}}}_{h_{L_{j-1}}}(u_j)-\tau_j\Em [p(u_j,\cdot)],
\end{equation*}
with
$$\Em [p(u_j,\cdot)]=\sum_{l=0}^{L_j} I_{h_l}^{h_{L_j}}\sum_{i=1}^{N_{j,l}}\frac{1}{N_{j,l}} \left( p ^{h_l}(u_j,\omega_{l,i}^{j})-I_{h_{l-1}}^{h_l}p^{h_{l-1}}(u_j,\omega_{l,i}^{j}) \right).$$
\end{remark}


\subsection{Convergence analysis}
\label{sec:sec31}

Let us denote by 
$$F_j=\sigma(\{ \omega_{l,i}^k\}, \hspace{1mm}k=1, \dots, j-1;\hspace{1mm} l=0, \dots, L_k;\hspace{1mm} i=1, \dots, N_{k,l} )$$ the $\sigma$-algebra generated by all the random variables $\{ \omega_{l,i}^k\}$ up to iteration $j-1$. Notice that the control $u_j$ is measurable with respect to $F_j$, i.e. the process $\{u_j\}$ is $\{F_j\}$-adapted.
We denote the conditional expectation to $F_j$ by $\E[\cdot|F_j]$.
\begin{theorem} 
\label{th10}
For any deterministic or random $F_j$-adapted sequence $\{ L_j,\hspace{1mm} j \geq 0\}$, $\{ N_{j,l} , \hspace{1mm} j \geq 0, \hspace{1mm} l \in \{ 0, \dots, L_j \}\}$, denoting by $u_{j}$ the approximated control obtained at iteration $j$ using the recursive formula (\ref{eq:MLSG_alg}), and $\uu$ the exact control for the continuous OCP \eqref{eqn:mother1}, we have:
\begin{equation}\label{mlmcSGRand}
\E[\| u_{j+1}-\uu \|^2|F_j]  \leq c_j\| u_j-\uu\|^2+ 2\tau_j^2 {\sigma_j^2}+\left( 2 \tau_j^2+\frac{2 \tau_j}{l} \right) {\epsilon_j^2}
\end{equation}
with
$c_j=1-\frac{l}{2}\tau_j+2Lip^2\left( 1+4\sum_{l=0}^{L_j} \frac{1}{N_{j,l}} \right)\tau_j^2$ \\
$\sigma_j^2=\sigma^2(L_j,\overrightarrow{N}_j )=\E[\|\Em [\nabla f (\uu)]-\E[\nabla f^{h_{L_j}} (\uu)|F_j]\|^2|F_j]\quad$ (variance term)\\ 
$\epsilon_j^2=\epsilon^2(L_j)=\|\E [\nabla f^{h_{L_j}} (\uu)|F_j]-\E[\nabla f  (\uu)]\|^2\quad$ (squared bias term).
\end{theorem}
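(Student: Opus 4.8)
The plan is to start from the one-step recursion \eqref{eq:MLSG_alg}, abbreviate the stochastic search direction by $G_j := \Em[\nabla f(u_j,\cdot)]$, and expand the squared error
\[
\|u_{j+1}-\uu\|^2 = \|u_j-\uu\|^2 - 2\tau_j\langle G_j, u_j-\uu\rangle + \tau_j^2\|G_j\|^2 .
\]
Taking $\E[\cdot|F_j]$ and exploiting that the fresh samples $\{\omega_{l,i}^j\}$ are independent of $F_j$ while $u_j$, $L_j$, $N_{j,l}$ are all $F_j$-measurable, the estimator becomes, conditionally, a genuine MLMC estimator at the fixed point $u_j$. The telescoping structure of \eqref{MLMCestim} then identifies the conditional mean $\mu_j := \E[G_j|F_j] = \E[\nabla f^{h_{L_j}}(u_j)|F_j]$, which I will use throughout. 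The whole argument is organized so that the unknown optimum $\uu$ is inserted wherever the definitions of $\sigma_j^2$ and $\epsilon_j^2$ require it.

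For the linear term I would split $\mu_j = \E[\nabla f^{h_{L_j}}(u_j)-\nabla f^{h_{L_j}}(\uu)|F_j] + \E[\nabla f^{h_{L_j}}(\uu)|F_j]$. The strong convexity of Lemma~\ref{lemma:Lipconv_h}, valid at the fixed level $h_{L_j}$ with the $h$-independent constant $l$, bounds the pairing of the first contribution with $u_j-\uu$ from below by $\tfrac{l}{2}\|u_j-\uu\|^2$, while the optimality condition $\E[\nabla f(\uu)]=\nabla J(\uu)=0$ identifies the second contribution as exactly the bias vector of norm $\epsilon_j$. A Cauchy--Schwarz estimate followed by Young's inequality with weight $l/2$ then gives
\[
-2\tau_j\langle\mu_j,u_j-\uu\rangle \leq -\tfrac{l}{2}\tau_j\|u_j-\uu\|^2 + \tfrac{2\tau_j}{l}\epsilon_j^2,
\]
which already produces the $-\tfrac{l}{2}\tau_j$ term inside $c_j$ together with the $\tfrac{2\tau_j}{l}\epsilon_j^2$ contribution.

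For the quadratic term I would use the bias--variance split $\E[\|G_j\|^2|F_j] = \|\mu_j\|^2 + \V[G_j|F_j]$. The mean part is controlled by $\|\mu_j\|\le Lip\|u_j-\uu\| + \epsilon_j$ (Lipschitz continuity of Lemma~\ref{lemma:Lipconv_h} plus the bias identity above), whence $\|\mu_j\|^2 \le 2Lip^2\|u_j-\uu\|^2 + 2\epsilon_j^2$. For the variance I would write $\V[G_j|F_j] = \sum_{l=0}^{L_j} N_{j,l}^{-1}\V[D_l(u_j)|F_j]$ with level increments $D_l(u):=\nabla f^{h_l}(u,\cdot)-\nabla f^{h_{l-1}}(u,\cdot)$, then compare $D_l(u_j)$ with $D_l(\uu)$ via $\V[D_l(u_j)]\le 2\V[D_l(u_j)-D_l(\uu)]+2\V[D_l(\uu)]$; bounding the variance of the difference by its second moment and applying Lipschitz continuity at levels $h_l$ and $h_{l-1}$ yields $\V[D_l(u_j)-D_l(\uu)|F_j]\le 4Lip^2\|u_j-\uu\|^2$. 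Summing over levels reproduces $8Lip^2\bigl(\sum_l N_{j,l}^{-1}\bigr)\|u_j-\uu\|^2 + 2\sigma_j^2$, the second sum being exactly $\sigma_j^2$. Collecting the mean and variance parts gives $\E[\|G_j\|^2|F_j]\le 2Lip^2\bigl(1+4\sum_l N_{j,l}^{-1}\bigr)\|u_j-\uu\|^2 + 2\sigma_j^2 + 2\epsilon_j^2$; multiplying by $\tau_j^2$ and adding the linear-term estimate assembles the claimed inequality with the stated $c_j$, the term $2\tau_j^2\sigma_j^2$, and $(2\tau_j^2+\tfrac{2\tau_j}{l})\epsilon_j^2$.

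The main obstacle is conceptual rather than computational: the variance and bias quantities $\sigma_j^2$ and $\epsilon_j^2$ are anchored at the unknown optimum $\uu$, whereas the algorithm evaluates the estimator at the current iterate $u_j$. The argument therefore hinges on systematically introducing $\uu$ into both the conditional mean and each per-level variance contribution and absorbing the resulting discrepancies into $\|u_j-\uu\|^2$ through the $h$-uniform Lipschitz bound of Lemma~\ref{lemma:Lipconv_h}. Carefully tracking the factor-of-two losses from each application of $\|a+b\|^2\le 2\|a\|^2+2\|b\|^2$ (and the analogous splitting for variances) is precisely what fixes the constants $1$ and $4$ and the weights $2$ appearing in the statement.
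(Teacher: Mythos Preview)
Your proof is correct and follows essentially the same route as the paper: both insert the optimum $\uu$ into the conditional mean and into each per-level variance via the uniform Lipschitz bound and use strong convexity plus Young's inequality for the linear term, arriving at the identical constants. The only cosmetic difference is that you organize the quadratic term through the explicit bias--variance identity $\E[\|G_j\|^2|F_j]=\|\mu_j\|^2+\V[G_j|F_j]$, whereas the paper writes the estimator as $A+B+C$ and expands all ten cross terms of $\|u_j-\uu-\tau_j(A+B+C)\|^2$; the bounds on $\|\mu_j\|^2$ and on the per-level variance coincide term by term with the paper's treatment of $\|A\|^2$, $\langle A,C\rangle$, $\|C\|^2$ and $\|B\|^2$.
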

\begin{proof}Using the optimality condition
$$\E[\nabla f  (\uu)]=0,$$ 
and the fact that the expectation of the MLMC estimator, for any deterministic or random $F_j$-measurable $u \in U$, is
$$\E[\Em [\nabla f (u)]|F_j]=\E[\nabla f^{h_{L_j}}(u)|F_j],$$
we can decompose the error at iteration $j+1$ as
\begin{equation}\label{first10tbis}
\begin{multlined}
\left\| u_{j+1}-\uu \right\|^2 =\Big\| u_j-\uu -\tau_j \underbrace{\left(\Em [\nabla f (u_j, \cdot)]-\E[\nabla f^{h_{L_j}} (u_j, \cdot)|F_j]\right)}_{=B} \\
 - \tau_j \underbrace{\left(\E[\nabla f^{h_{L_j}} (u_j, \cdot)|F_j]-\E[\nabla f^{h_{L_j}} (\uu, \cdot)|F_j]\right)}_{=A} -\tau_j \underbrace{\left(\E[\nabla f^{h_{L_j}} (\uu, \cdot)|F_j]-\E[\nabla f  (\uu, \cdot)]\right)}_{=C}\Big\|^2.
\end{multlined}
\end{equation}
Then, taking the conditional expectation of \eqref{first10tbis} to the $\sigma$-algebra $F_j$, we obtain the 10 following terms
\begin{equation}\label{eq:9teqbis}
\begin{multlined}  
\E[\| u_{j+1}-\uu \|^2|F_j]  =\| u_j-\uu\|^2 + \tau_j^2 \E[\| B \|^2|F_j] +\tau_j^2 \E[\| A \|^2|F_j]+\tau_j^2 \E[\| C \|^2|F_j]\\ \numberthis   -2 \tau_j \langle u_j-\uu,\E[B|F_j] \rangle -2 \tau_j \langle u_j-\uu,\E[A|F_j]\rangle-2 \tau_j \langle u_j-\uu,\E[C|F_j] \rangle \\  +2 \tau_j^2  \E[\langle A, C \rangle |F_j]+2 \tau_j^2 \E[\langle A,B \rangle|F_j]+2 \tau_j^2  \E[\langle B, C \rangle|F_j].
\end{multlined}
\end{equation}
The second term on the right hand side can be bounded using the expression of the variance of the MLMC estimator and the Lipschitz property in Lemma \ref{lemma:Lipconv_h}, as
\begin{align*}
\E[\| B \|^2|F_j] & =\E[\| \Em [\nabla f (u_j, \cdot)]-\E[\nabla f^{h_{L_j}} (u_j, \cdot)|F_j] \|^2|F_j]\\
& =\E\left[\left\| \sum_{l=0}^{L_j}\frac{1}{N_{j,l}} \sum_{i=1}^{N_{j,l}} \underbrace{\left(\nabla f^{h_l} (u_j,\omega_{i,l}^{j})-\nabla f^{h_{l-1}} (u_j,\omega_{i,l}^{j})-\E[\nabla f^{h_l} (u_j,\cdot)-\nabla f^{h_{l-1}} (u_j,\cdot)|F_j]\right)}_{=E_{i,l}(u_j)} \right\|^2|F_j\right]\\
& = \sum_{l=0}^{L_j} \sum_{i=1}^{N_{j,l}}\sum_{l'=0}^{L_j} \sum_{i'=1}^{N_{j,l'}}\frac{1}{N_{j,l}}\frac{1}{N_{j,l'}} \underbrace{\E\left[\langle{E_{i,l}}(u_j),{E_{i',l'}(u_j)}\rangle |F_j\right]}_{=0 \mathrm{\hspace{1mm}if\hspace{1mm}} i \neq i'\mathrm{\hspace{1mm}or\hspace{1mm}} l \neq l'}\\
& = \sum_{l=0}^{L_j} \sum_{i=1}^{N_{j,l}}\frac{1}{N_{j,l}^2} \E\left[\|E_{i,l}(u_j)\|^2 |F_j\right]\\
& = \sum_{l=0}^{L_j} \frac{1}{N_{j,l}} \E\left[\|E_{1,l}(u_j)\|^2 |F_j\right]\\
& \leq 2 \sum_{l=0}^{L_j} \frac{1}{N_{j,l}} \E[\| E_{1,l}(u_j)-E_{1,l}(\uu) \|^2|F_j]+2\sum_{l=0}^{L_j} \frac{1}{N_{j,l}}\E[\| E_{1,l}(\uu) \|^2|F_j] \\
& \leq 8 Lip^2 \left( \sum_{l=0}^{L_j} \frac{1}{N_{j,l}} \right) \| u_j-\uu \|^2+2\underbrace{\E[\| \Em[\nabla f (\uu, \cdot)]-\E[\nabla f^{h_{L_j}}(\uu, \cdot)|F_j] \|^2|F_j]}_{=\sigma_j^2}.
\end{align*}
In the above, for the first inequality we have used the identity $(a+b)^2 \leq 2a^2 + 2b^2$, for any $a, b \in \R$ while for the second we applied the Lipschitz bound on $f^{h_{l}}$.
Finally,
\begin{equation} \label{star1}
\tau_j^2\E[\| B \|^2|F_j] \leq \tau_j^2 8 Lip^2 \left( \sum_{l=0}^{L_j} \frac{1}{N_{j,l}} \right) \| u_j-\uu \|^2+2 \tau_j^2\sigma_j^2.
\end{equation}
Using again the Lischitz property, the third term in (\ref{eq:9teqbis}) can be bounded as
\begin{equation}\label{star2}
\tau_j^2\E[\| A \|^2|F_j] \leq \tau_j^2\E[\|\nabla f^{h_{L_j}} (u_j, \cdot)-\nabla f^{h_{L_j}} (\uu, \cdot)\|^2|F_j]
 \leq \tau_j^2 Lip^2 \| u_j-\uu \|^2,
\end{equation}
where we have exploited Jensen's inequality for conditional expectation to bound $\big\Vert\E[\cdot \vert F_j] \big\Vert^2 \leq \E[\Vert\cdot\Vert^2 \vert F_j]$.
The fourth term $\E[\| C \|^2|F_j]=\epsilon_j^2$ is just the squared bias term whereas the fifth term
$-2 \tau_j \langle u_j-\uu,\E[B|F_j] \rangle $ vanishes since $\E[B\vert F_j] = 0$.
For the sixth term, using strong convexity of $f^h$ uniform in $h$ (see Lemma \ref{lemma:Lipconv_h}), we get
\begin{align*}
-2 \tau_j \langle u_j-\uu,\E[A|F_j]\rangle & = -2 \tau_j \langle u_j-\uu,\E[\nabla f^{h_{L_j}} (u_j, \cdot)-\nabla f^{h_{L_j}} (\uu, \cdot)|F_j]\rangle\\
& \leq -l \tau_j \| u_j-\uu \|^2.
\end{align*}
The seventh term of (\ref{eq:9teqbis}) can be bounded as
\begin{align*}
-2 \tau_j \E\left[ \langle u_j-\uu,C \rangle|F_j \right] & \leq \frac{l \tau_j}{2} \| u_j-\uu \|^2+\frac{2 \tau_j}{l} \E\left[\| C \|^2|F_j \right]=\frac{l \tau_j}{2} \| u_j-\uu \|^2+\frac{2 \tau_j}{l} \epsilon_j^2.
\end{align*}
The eighth term can be decomposed as:
\begin{equation*}
2 \tau_j^2  \E[\langle A,C \rangle|F_j]  \leq \tau_j^2 \E[\| A \|^2|F_j]+\tau_j^2 \epsilon_j^2
\end{equation*}
The ninth and tenth terms vanish since $A$ and $C$ are measurable on $F_j$ and $\E[B|F_j]=0$. Hence
\begin{align*}
2 \tau_j^2 \E[\langle B,A \rangle|F_j]=2 \tau_j^2 \E[\langle B,C \rangle|F_j]=0.
\end{align*}
In conclusion, we have
\begin{equation}\label{bounds}
\E[\| u_{j+1}-\uu \|^2|F_j]  \leq \left( 1-\frac{l}{2}\tau_j+Lip^2\left( 2+8\sum_{l=0}^{L_j} \frac{1}{N_{j,l}} \right)\tau_j^2 \right)\| u_j-\uu\|^2+2 \tau_j^2 \sigma_j^2+\left( 2\tau_j^2+\frac{2}{l}\tau_j \right)\epsilon^2_j.
\end{equation}
\end{proof}
From now on, we consider only deterministic sequences $\{ L_j \}, \{  N_{j,l}\}$, i.e. chosen in advance and not adaptively during the algorithm. In this case the quantities $\sigma_j$ and $\epsilon_j$ defined in Theorem \ref{th10} are deterministic as well. From Theorem \ref{th10}, taking the full expectation $\E[\cdot]$ in \eqref{mlmcSGRand} leads to the recurrence
\begin{equation} \label{RecForm}
a_{j+1}  \leq c_j a_j+ \aaa \tau_j^2 \sigma_j^2+\bbb \tau_j \epsilon_j^2,
\end{equation}
where $a_j$ denotes the MSE $a_j=\E[\| u_j-\uu\|^2]$, $c_j$, $\sigma_j^2$, $\epsilon_j^2$ are defined in Theorem \ref{th10} and $\aaa=2$, $\bbb=2\tau_0+\frac{2}{l}$.

We now derive error bounds on the variance term $\sigma^2(L,\overrightarrow{N} )$ and the bias term $\epsilon^2(L)$ as a function of the total number of levels $L$ and  the sample sizes $\overrightarrow{N}=\{ N_{0}, N_{1}, \dots, N_{{L}} \}.$
\begin{lemma}\label{lemma:biaslemma} For a sufficiently smooth optimal control $\uu$ and primal and dual solutions $y(\uu)$, $p(\uu)$, the bias term $\epsilon^2(L)$ associated to the MLMC estimator \eqref{newrec} with $L$ levels satisfies 
$$\epsilon^2(L)  \leq C(\uu)h_L^{2r+2}$$
for some constant $C(\uu)>0$.
\end{lemma}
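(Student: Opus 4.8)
The plan is to recognize that, for the deterministic sequences $\{L_j\}$ and $\{N_{j,l}\}$ under consideration, the squared bias term from Theorem \ref{th10} collapses to a purely deterministic quantity measuring only the finite-element discretization error at the finest level $L$. First I would recall that the MLMC estimator defined in Section \ref{MLMC} is telescoping, so that its expectation coincides with the expectation of the finest-level approximation, $\E[\Em[\nabla f(\uu)]] = \E[\nabla f^{h_L}(\uu, \cdot)]$ (this is exactly the unbiasedness identity $\E[\Em[\nabla f(u)]|F_j] = \E[\nabla f^{h_{L_j}}(u)|F_j]$ already used in the proof of Theorem \ref{th10}). Consequently, with the level fixed deterministically the conditioning on $F_j$ carries no information and
$$\epsilon^2(L) = \big\| \E[\nabla f^{h_L}(\uu, \cdot)] - \E[\nabla f(\uu, \cdot)] \big\|^2 = \big\| \E\big[\nabla f^{h_L}(\uu, \cdot) - \nabla f(\uu, \cdot)\big] \big\|^2.$$

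The second step is to move the norm inside the expectation. Since $v \mapsto \|v\|^2$ is convex on $U = L^2(D)$, Jensen's inequality for the Bochner expectation gives $\|\E[X]\|^2 \leq \E[\|X\|^2]$ for any integrable $U$-valued random variable $X$. Applying this with $X = \nabla f^{h_L}(\uu, \cdot) - \nabla f(\uu, \cdot)$ yields
$$\epsilon^2(L) \leq \E\big[\| \nabla f^{h_L}(\uu, \cdot) - \nabla f(\uu, \cdot) \|^2\big].$$

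Finally I would invoke Lemma \ref{lemma:bias} directly with mesh size $h = h_L$: under the assumed regularity $y(\uu), p(\uu) \in L^2_{\mathbb{P}}(\Gamma, H^{r+1}(D))$, that lemma provides precisely $\E[\|\nabla f^h(\uu, \cdot) - \nabla f(\uu, \cdot)\|^2] \leq C(\uu)\, h^{2r+2}$, and substituting $h = h_L$ closes the estimate with the same constant $C(\uu)$.

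I do not expect a genuine obstacle here: the result is essentially a two-line consequence of the telescoping (unbiasedness) property of the MLMC estimator together with the already-established $L^2(\Gamma)$ finite-element bound of Lemma \ref{lemma:bias}. The only points requiring minor care are the correct identification of the estimator's expectation with the \emph{finest}-level FE gradient, so that the bias is governed by $h_L$ alone and not by the coarse levels, and the Jensen step that converts the norm of a mean into the mean of a squared norm, thereby matching the $L^2$-in-$\omega$ form in which Lemma \ref{lemma:bias} is phrased.
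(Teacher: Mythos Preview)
Your proposal is correct and follows essentially the same route as the paper: telescoping to identify $\E[\Emg[\nabla f(\uu)]]$ with $\E[\nabla f^{h_L}(\uu,\cdot)]$, Jensen's inequality to pass from $\|\E[\cdot]\|^2$ to $\E[\|\cdot\|^2]$, and then Lemma~\ref{lemma:bias} at $h=h_L$. The paper's proof is the same four-line chain, with the Jensen step left implicit.
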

\begin{proof}
Following the definition of the MLMC estimator, and using its telescoping property, we have:
\begin{align*}
\epsilon^2(L) & =\| \E[\Emg[\nabla f (\uu)]]-\E[\nabla f (\uu)] \|^2\\
& = \|\E[\nabla f ^{h_L}(\uu,\cdot)]-\E[\nabla f (\uu, \cdot)] \|^2\\
& \leq \E[\| \nabla f ^{h_L}(\uu,\cdot)-\nabla f (\uu, \cdot) \|^2]\\
& \leq C(\uu)h_L^{2r+2}.
\end{align*}
where in the last step we have used Lemma \ref{lemma:bias}.
\end{proof}
\begin{lemma}\label{lemma:varlemma} For a sufficiently smooth optimal control $\uu$ and primal and dual solutions $y(\uu)$, $p(\uu)$, the variance term $\sigma^2(L,\overrightarrow{N} )$ associated to the MLMC estimator (\ref{eq:MLSG_alg}) using $L$ levels and $\overrightarrow{N}=\{ N_{0}, N_{1}, \dots, N_{{L}} \}$ sample sizes, satisfies 
$$\sigma^2(L,\overrightarrow{N} )  \leq 2 \widetilde{C}(\uu)\sum_{l=0}^L\frac{h_{l}^{2r+2}}{N_l}$$
for some constant $\widetilde{C}(\uu)>0$, namely $\widetilde{C}(\uu)=\max\{ C(\uu)\left( 1+2^{2r+2} \right), \frac{\E\left[ \left\|\nabla f^{h_0}(\uu)  \right\|^2 \right]}{2 h_0^{2r+2}} \}$ and $C(\uu)$ as in Lemma \ref{lemma:biaslemma}.
\end{lemma}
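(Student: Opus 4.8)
The plan is to reduce the variance term to a sum of per-level variances and then bound each of these via the finite element error estimate of Lemma \ref{lemma:bias}. Since we are now in the deterministic-sequence regime, $\sigma^2(L,\overrightarrow{N})$ is exactly the conditional-variance computation already carried out for the term $B$ in the proof of Theorem \ref{th10}, evaluated at the fixed control $\uu$. Reusing that computation --- in which the cross terms vanish by independence across samples and across levels together with the centering of each summand --- yields the identity $\sigma^2(L,\overrightarrow{N}) = \sum_{l=0}^{L} \frac{1}{N_l}\,\V[\nabla f^{h_l}(\uu)-\nabla f^{h_{l-1}}(\uu)]$, with the convention $\nabla f^{h_{-1}} \equiv 0$ inherited from $P_{-1}=0$. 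The task thus reduces to bounding the single-level variance $V_l := \V[\nabla f^{h_l}(\uu)-\nabla f^{h_{l-1}}(\uu)]$ by $2\widetilde{C}(\uu)\,h_l^{2r+2}$ for each $l$.

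For the levels $l \geq 1$ I would first bound the variance by the second moment, $V_l \leq \E[\|\nabla f^{h_l}(\uu)-\nabla f^{h_{l-1}}(\uu)\|^2]$, then insert $\pm\,\nabla f(\uu)$ and apply $\|a+b\|^2 \leq 2\|a\|^2+2\|b\|^2$ to split the increment into the two consecutive finite element errors $\nabla f^{h_l}(\uu)-\nabla f(\uu)$ and $\nabla f^{h_{l-1}}(\uu)-\nabla f(\uu)$. Lemma \ref{lemma:bias} bounds each of these by $C(\uu)\,h_l^{2r+2}$ and $C(\uu)\,h_{l-1}^{2r+2}$ respectively, and the geometric mesh relation $h_{l-1}=2h_l$ gives $h_{l-1}^{2r+2}=2^{2r+2}h_l^{2r+2}$. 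Combining yields $V_l \leq 2C(\uu)(1+2^{2r+2})\,h_l^{2r+2} \leq 2\widetilde{C}(\uu)\,h_l^{2r+2}$, which accounts for the first argument of the maximum defining $\widetilde{C}(\uu)$.

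The coarsest level $l=0$ must be treated separately, and this is the only genuinely delicate point: there is no coarser mesh, so the $\pm\,\nabla f(\uu)$ splitting is unavailable and Lemma \ref{lemma:bias} does not apply directly to $\nabla f^{h_0}(\uu)$, whose second moment does not vanish under refinement. Here I would simply bound $V_0 = \V[\nabla f^{h_0}(\uu)] \leq \E[\|\nabla f^{h_0}(\uu)\|^2]$ and absorb this raw second moment into the target form by requiring $2\widetilde{C}(\uu)\,h_0^{2r+2} \geq \E[\|\nabla f^{h_0}(\uu)\|^2]$; this is precisely why $\widetilde{C}(\uu)$ is defined as the maximum with $\E[\|\nabla f^{h_0}(\uu)\|^2]/(2h_0^{2r+2})$. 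Summing the per-level estimates $V_l/N_l \leq 2\widetilde{C}(\uu)\,h_l^{2r+2}/N_l$ over $l=0,\dots,L$ then gives the claimed bound. I expect the only real subtlety to be this coarsest-level bookkeeping; everything else is a direct reuse of the variance decomposition from Theorem \ref{th10} combined with Lemma \ref{lemma:bias}.
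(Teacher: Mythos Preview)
Your proposal is correct and follows essentially the same route as the paper: decompose $\sigma^2(L,\overrightarrow{N})$ into the sum $\sum_{l=0}^L V_l/N_l$ by independence, bound $V_l$ for $l\geq 1$ by inserting $\pm\nabla f(\uu)$ and applying Lemma~\ref{lemma:bias} together with $h_{l-1}=2h_l$, and handle $l=0$ separately via the raw second moment, which forces the second argument in the maximum defining $\widetilde{C}(\uu)$. The only cosmetic difference is that the paper applies the $(a+b)^2\leq 2a^2+2b^2$ split to the centered quantities before dropping the centering, whereas you drop the centering first; the resulting bound is identical.
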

\begin{proof}
We use again the notation ${E_{i,l}(\uu)}=\nabla f^{h_l} (\uu,\omega_{i,l})-\nabla f^{h_{l-1}} (\uu,\omega_{i,l})-\E[\nabla f^{h_l} (\uu,\cdot)-\nabla f^{h_{l-1}} (u^*,\cdot)]$, with $\omega_{i,l}$ iid with distribution $\mathbb{P}$ on $\Gamma$ and denote $V_l=\E\left[\|E_{i,l}(\uu)\|^2\right]$. For $l=0, \dots, L$, $i=1, \dots, N_l$, we have
\begin{align*}
\sigma^2(L,\overrightarrow{N} )  =&\E \| \Emg[\nabla f (\uu, \cdot)]-\E[\nabla f^{h_L} (\uu, \cdot)] \|^2\\
 =& \E \| \sum_{l=0}^{L}\frac{1}{N_l} \sum_{i=1}^{N_l} E_{i,l}(\uu) \|^2\\
 =& \sum_{l,l'}\sum_{i,i'}\frac{1}{N_l N_{l'}}\E \langle E_{i,l}(\uu),E_{i',l'}(\uu) \rangle\\
 =& \sum_{l=0}^L\sum_{i=1}^{N_l}\frac{1}{N_l^2}\E[ \| E_{i,l}(\uu) \|^2]=\sum_{l=0}^L\frac{V_l}{N_l}
 \end{align*}
 with the following bounds for the quantity $V_0$
\begin{align*}
V_0 & \leq \E\left[ \left\| \nabla f^{h_0}(\uu, \cdot)-\E\left[ \nabla f^{h_0}(\uu, \cdot) \right] \right\|^2 \right]\\
&\leq \E\left[ \left\| \nabla f^{h_0}(\uu, \cdot) \right\| ^2\right],
\end{align*} 
and $V_l$, for $l \geq 1$:
\begin{align*}
V_l \leq& 2\E [\|  \nabla f^{h_l} (\uu,\cdot)-\nabla f (\uu,\cdot)-\E[\nabla f^{h_l} (\uu, \cdot)-\nabla f (\uu, \cdot)] \|^2] \\
&+2\E\left[\left\|\nabla f^{h_{l-1}} (\uu,\cdot)-\nabla f (\uu,\cdot)-\E\left[\nabla f^{h_{l-1}} (\uu, \cdot)-\nabla f (\uu, \cdot)\right] \right\|^2\right]\\
 \leq& 2\left(\E\left[ \|  \nabla f^{h_l} (\uu,\cdot)-\nabla f (\uu,\cdot) \|^2] +\E[\|\nabla f^{h_{l-1}} (\uu,\cdot)-\nabla f (\uu,\cdot) \|^2\right]\right)\\
 \leq& 2 C(\uu) \left( h_l^{2r+2}+h_{l-1}^{2r+2}\right)=2 C(\uu)\left( 1+2^{2r+2} \right)h_l^{2r+2}.
 \end{align*} 
Then combining the last two bounds, we reach:
\begin{align*}
\sigma^2(L,\overrightarrow{N} ) \leq& \sum_{l=0}^L\frac{2}{N_l}\widetilde{C}(\uu) h_{l}^{2r+2}
\end{align*}
with $\widetilde{C}(\uu)=\max\{ C(\uu)\left( 1+2^{2r+2} \right), \frac{\E\left[ \left\|\nabla f^{h_0}(\uu)  \right\|^2 \right]}{2 h_0^{2r+2}} \}$.
\end{proof}
From Lemmas \ref{lemma:biaslemma} and \ref{lemma:varlemma} we see that the bias $\epsilon_j^2=\epsilon^2(L_j)$ and the variance $\sigma^2_j=\sigma^2(L_j, \overrightarrow{N}_j)$ terms go to zero if $L_j, N_{j,l}\rightarrow \infty$ as $j \rightarrow \infty$. The rate at which $L_j$ and $N_{j,l}$ should diverge to $\infty$ as $j\rightarrow \infty$ should be chosen so as to minimize the total computational cost to achieve a prescribed accuracy.

We express here a final bound obtained on the MSE $a_j$ after $j$ iterations when using the MLSG iterative scheme in (\ref{eq:MLSG_alg}). Specifically, in the next Lemma we assume algebraic convergence rates for the two terms $\tau_j^2 \sigma_j^2\sim (j+\alpha)^{-\eta_1}$ and $\tau_j \epsilon_j^2\sim (j+\alpha)^{-\eta_2}$ and discuss in Lemma \ref{lemma:thmglobal} how the parameters $(\eta_1, \eta_2)$ should be chosen, to obtain the best complexity, while guaranteeing a given MSE.


\begin{lemma} \label{lemma:assumptionconvergence}
Assuming that we can choose $L_j$ and $\overrightarrow{N}_j$ such that the bias and variance terms decay as 
\begin{equation} \label{convepsilon}
C(\uu)h_{L_j}^{2r+2} \sim \epsilon_0^2 (j+\alpha)^{-\eta_2+1}, \quad \eta_2 \in ]1,\frac{\tau_0 l}{2}+1],
\end{equation}
\begin{equation}\label{convsigma}
\sum_{l=0}^{L_j}\frac{2}{N_{j,l}}\widetilde{C}(\uu) h_{l}^{2r+2}  \sim \sigma_0^2 (j+\alpha)^{-\eta_1+2}, \quad \eta_1 \in ]2,\frac{\tau_0 l}{2}+1],
\end{equation}
respectively, with $\sigma_0^2>0$, $\epsilon_0^2>0$ constants, and taking $\tau_j=\tau_0/(j+\alpha)$ with $\tau_0 l>2$, then we can bound the MSE $a_j=\E[\| u_j-\uu \|^2]$ for the MLSG scheme \eqref{eq:MLSG_alg} after $j\geq 2$ iterations as
\begin{equation} \label{rec2t}
  a_{j}\leq 
       C \begin{cases}
      (j+\alpha)^{1-\min\{\eta_1, \eta_2\}} \Big/(1 + \frac{\tau_0 l}{2} - \min\{\eta_1,\eta_2\}) , & \text{if}\ \min\{\eta_1,\eta_2\} < \frac{\tau_0 l}{2}+1 \\
      \log(j+\alpha)(j+\alpha)^{-\frac{\tau_0 l}{2}}, & \text{if}\ \eta_1 =\eta_2 = \frac{\tau_0 l}{2}+1
    \end{cases}
\end{equation}
with $C$ independent of $j$, $\eta_1$ and $\eta_2$.
\end{lemma}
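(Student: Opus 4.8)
The plan is to collapse \eqref{rec2t} to a single scalar recursion of Chung type and then close it by induction. First I would substitute the postulated decay rates \eqref{convepsilon}--\eqref{convsigma} into the recurrence \eqref{RecForm}. Since $\tau_j=\tau_0/(j+\alpha)$, Lemmas \ref{lemma:biaslemma} and \ref{lemma:varlemma} give $\tau_j^2\sigma_j^2\lesssim(j+\alpha)^{-\eta_1}$ and $\tau_j\epsilon_j^2\lesssim(j+\alpha)^{-\eta_2}$, so the two forcing terms in \eqref{RecForm} combine into one bound $K(j+\alpha)^{-\eta}$ with $\eta:=\min\{\eta_1,\eta_2\}$ and $K$ depending only on $\tau_0,l,\sigma_0^2,\epsilon_0^2$ (one simply uses $(j+\alpha)^{-\eta_i}\le(j+\alpha)^{-\eta}$ for $j+\alpha\ge 1$). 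In parallel I would analyze the contraction factor, writing $c_j=1-\tfrac{\tau_0 l}{2(j+\alpha)}+R_j$ with $R_j=2Lip^2\bigl(1+4\sum_{l=0}^{L_j}N_{j,l}^{-1}\bigr)\tau_j^2$. Because $h_{L_j}=h_02^{-L_j}$ and \eqref{convepsilon} force $L_j=O(\log(j+\alpha))$, while each $N_{j,l}^{-1}\le 1$, one has $\sum_l N_{j,l}^{-1}=O(\log(j+\alpha))$ and hence $R_j=O(\log(j+\alpha)(j+\alpha)^{-2})=o((j+\alpha)^{-1})$; this is the crucial fact that lets the leading contraction rate $p:=\tau_0 l/2>1$ survive unchanged.

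With these reductions the claim becomes: for the scalar recursion $a_{j+1}\le(1-p/(j+\alpha)+R_j)a_j+K(j+\alpha)^{-\eta}$ with $1<\eta\le p+1$, prove \eqref{rec2t}. I would argue by induction on $j$, starting from a base index $j_0$ chosen large enough that $(j+\alpha)R_j\le\delta$ for a prescribed small $\delta$. In the subcritical regime $\eta<p+1$ I take the ansatz $a_j\le C_{\mathrm{eff}}(j+\alpha)^{1-\eta}$ with $C_{\mathrm{eff}}=C/(1+p-\eta)$. Using the Bernoulli inequality $(1+(j+\alpha)^{-1})^{-(\eta-1)}\ge 1-(\eta-1)(j+\alpha)^{-1}$ to lower-bound $(j+1+\alpha)^{1-\eta}$, the inductive step reduces, after multiplying through by $(j+\alpha)^{\eta}$, to the algebraic inequality $K+(j+\alpha)R_j\,C_{\mathrm{eff}}\le C_{\mathrm{eff}}(p-(\eta-1))$; since $p-(\eta-1)=1+\tfrac{\tau_0 l}{2}-\eta>0$, this holds once $C\ge K/(1-\delta/(1+p-\eta))$, which fixes precisely the claimed constant and its denominator $1+\tau_0 l/2-\eta$.

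For the critical case $\eta_1=\eta_2=p+1$ (the only way $\eta=p+1$ can occur, given $\eta_1,\eta_2\le p+1$), the pure power ansatz degenerates because $p-(\eta-1)=0$, so I switch to the logarithmically corrected ansatz $a_j\le C\log(j+\alpha)(j+\alpha)^{-p}$. Repeating the inductive step with $\log(j+1+\alpha)\ge\log(j+\alpha)$ and the same Bernoulli estimate, the resonant terms of order $(\log(j+\alpha))(j+\alpha)^{-p-1}$ cancel and one is left needing $K+(j+\alpha)R_j\,C\log(j+\alpha)\le C$; since $(j+\alpha)R_j\log(j+\alpha)=O((\log(j+\alpha))^2(j+\alpha)^{-1})\to 0$, this holds for $C$ slightly larger than $K$ and $j$ large. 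In both regimes the initial transient $2\le j\le j_0$ is absorbed by enlarging $C$, using that the finitely many early values $a_j$ are finite and that the quantities entering the base case stay bounded over the admissible range of $\eta$, which also yields the claimed uniformity of $C$ in $(\eta_1,\eta_2)$.

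The step I expect to be the main obstacle is the careful control of $c_j$: one must verify that the quadratic correction $R_j$ is genuinely $o((j+\alpha)^{-1})$ rather than a true first-order perturbation, since otherwise the effective rate would drop below $\tau_0 l/2$ and both the exponent and the sharp constant $1/(1+\tau_0 l/2-\eta)$ would be lost. This hinges on the logarithmic growth of $L_j$ and hence of $\sum_l N_{j,l}^{-1}$. An alternative, equivalent route would avoid the induction by unrolling the recursion as $a_j\le\bigl(\prod_{k=j_0}^{j-1}c_k\bigr)a_{j_0}+\sum_{k=j_0}^{j-1}\bigl(\prod_{m=k+1}^{j-1}c_m\bigr)K(k+\alpha)^{-\eta}$ and estimating $\prod_k c_k\asymp(j+\alpha)^{-p}$ together with the resulting sum; but this demands the same quantitative handling of $R_j$ and of the boundary resonance, so the induction seems the cleaner path to the sharp constant.
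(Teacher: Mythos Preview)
Your reduction is correct and the key technical observation---that $\sum_{l=0}^{L_j}N_{j,l}^{-1}\le L_j+1=O(\log(j+\alpha))$ so that the quadratic remainder $R_j$ in $c_j$ is $o((j+\alpha)^{-1})$---is exactly the hinge the paper uses as well. The route, however, is different: the paper does precisely what you list as the ``alternative, equivalent route'' at the end. It unrolls the recursion into $a_{j+1}\le \bigl(\prod_{i=1}^j c_i\bigr)a_1+\sum_{i=1}^j(\lambda\tau_i^2\sigma_i^2+\mu\tau_i\epsilon_i^2)\prod_{k=i+1}^j c_k$, bounds each partial product by taking logarithms (using $\log c_k\le -\tfrac{\tau_0 l}{2(k+\alpha)}+\widehat c\,\tfrac{\log(k+1)}{(k+\alpha)^2}$ and the summability of the second piece to a constant $M$), obtains $\prod_{k=i+1}^j c_k\le e^M\bigl(\tfrac{i+1+\alpha}{j+1+\alpha}\bigr)^{\tau_0 l/2}$, and then the denominator $1+\tfrac{\tau_0 l}{2}-\eta$ drops out of the integral bound $\sum_{i=1}^j(i+\alpha)^{\tau_0 l/2-\eta}\le (j+\alpha)^{\tau_0 l/2-\eta+1}/(\tfrac{\tau_0 l}{2}-\eta+1)$. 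The critical case is handled by the same sum with exponent $-1$ giving the logarithm.

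What each approach buys: the unrolling makes the uniformity of $C$ in $(\eta_1,\eta_2)$ essentially automatic, because $e^M$ and the leading constants are visibly $\eta$-free and the only $\eta$-dependence is the explicit factor from the integral. Your Chung-type induction is arguably cleaner conceptually, but the uniformity claim needs more care than you sketch: the threshold $j_0$ for which $(j+\alpha)R_j\le\delta$ must satisfy $\delta<1+\tfrac{\tau_0 l}{2}-\eta$, so as $\eta\uparrow \tfrac{\tau_0 l}{2}+1$ you are forced to take $\delta\downarrow 0$ and hence $j_0\to\infty$, and absorbing the lengthening transient while keeping $C$ bounded uniformly is not free. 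This is fixable (e.g.\ by first proving a crude $\eta$-independent bound $a_j\le C'$ for all $j$, then restarting the induction), but the paper's unrolling sidesteps the issue entirely.
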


\begin{proof}
We first notice that, as $N_{j,l}\geq 1$, under assumption \eqref{convepsilon},
\begin{equation}\label{sumNm1}
\sum_{l=0}^{L_j} \frac{1}{N_{j,l}} \leq L_j+1 \leq \widetilde{c} \log(j+1),
\end{equation}
for some constant $\widetilde{c}>0$.
From \eqref{RecForm} we obtain by induction
\begin{align*}
a_{j+1} \leq & c_j a_j + \aaa \tau_j^2 \sigma_j^2+\bbb \tau_j \epsilon_j^2 \\
\leq & c_j c_{j-1} a_{j-1} + c_j (\aaa \tau_{j-1}^2 \sigma_{j-1}^2+\bbb\tau_{j-1} \epsilon_{j-1}^2)+ \aaa\tau_j^2 \sigma_j^2+\bbb\tau_j \epsilon_j^2 \\
\lesssim & \cdots\\
\lesssim & \underbrace{\Big( \prod_{i=1}^j c_i \Big)}_{=\kappa_{0,j}} a_1 + \underbrace{\sum_{i=1}^j (\aaa\tau_i^2 \sigma_i^2+\bbb\tau_i \epsilon_i^2) \prod_{l=i+1}^j c_l}_{=\mathcal{B}_j}.\numberthis \label{eq:recbound}
\end{align*}
with $\aaa=2$ and $\bbb=2\tau_0+\frac{2}{l}$. For the first term $\kappa_{0,j}$, computing its logarithm, we have, 
\[
\log(\kappa_{0,j})=\sum_{i=1}^j \log(1-\frac{ \tau_0 l}{2(i+\alpha)}+ Lip^2\left(2+8\widetilde{c}\log(i+1)\right)\frac{\tau_0^2}{(i+\alpha)^{2}}) 
\leq \sum_{i=1}^j \frac{- \tau_0 l}{2(i+\alpha)} +\widehat{c}\sum_{i=1}^{j} \frac{\log(i+1)}{(i+\alpha)^{2}},
\]
with $\widehat{c}=Lip^2 \tau_0^2 \left( \frac{2}{\log 2}+ 8 \widetilde{c} \right)$. Thus
\[
\log(\kappa_{0,j}) \leq -\frac{\tau_0 l}{2} \log\left(j+1+\alpha\right) + \frac{\tau_0 l}{2}\log(\alpha+1) + M,\mathrm{\hspace{2mm}with\hspace{2mm}} M= \widehat{c}\sum_{i=1}^{\infty} \frac{\log(i+1)}{(i+\alpha)^{2}} < +\infty,
\]
where we have used the bound $\sum_{i=1}^j\frac{1}{i+\alpha} \leq \int_{1}^{j+1}\frac{1}{x+\alpha}dx$ and we finally get $\kappa_{0,j} \leq C_1 \left(j+\alpha\right)^{- \frac{\tau_0 l}{2}}$ and $C_1 = (\alpha+1)^{\tau_0 l /2}e^{M}$.
For the second term in \eqref{eq:recbound} we have:
\[
\mathcal{B}_j \leq \sum_{i=1}^j \left(\aaa \tau_0^2 \sigma_0^2 (i+\alpha)^{-\eta_1}+\bbb \tau_0 \epsilon_0^2 (i+\alpha)^{-\eta_2}\right) \underbrace{\prod_{k=i+1}^j c_k}_{=\kappa_{i,j}}.\]
For the term $\kappa_{i,j}$ we can proceed as above:
\begin{align*}
\log(\kappa_{i,j}) &\leq \sum_{k=i+1}^j \Big(-\frac{\tau_0 l}{2(k+\alpha)}+ \widehat{c}\frac{\log(k+1)}{(k+\alpha)^2}\Big) \\
&\leq - \frac{\tau_0 l}{2} \left(\log(j+1+\alpha)-\log(i+1+\alpha)\right)+M,
\end{align*}
which shows that
\begin{align*}
\kappa_{i,j} & \leq (j+1+\alpha)^{-\frac{\tau_0 l}{2}} (i+1+\alpha)^{\frac{\tau_0 l}{2}} e^M.
\end{align*}
It follows, in the case $\min\{\eta_1,\eta_2\}<\frac{\tau_0l}{2}+1$ and using $(i+1+\alpha)^{\tau_0l/2}\leq \left(2(i+\alpha)\right)^{\tau_0l/2}$ for $i \geq 1$, that 
\begin{align*}
\mathcal{B}_j & \leq  (j+1+\alpha)^{- \frac{\tau_0 l}{2}} e^M \sum_{i=1}^{j} \left(\aaa\tau_0^2 \sigma_0^2 (i+\alpha)^{\frac{\tau_0 l}{2}-\eta_1}+\bbb\tau_0 \epsilon_0^2 (i+\alpha)^{\frac{\tau_0 l}{2}-\eta_2}\right)2^{\tau_0 l/2}  \\
& \leq C_2 (j+\alpha)^{1-\min\{\eta_1,\eta_2\}} \big/(1 + \frac{\tau_0 l}{2} - \min\{\eta_1,\eta_2\}),
\end{align*}
with $C_2 = e^{M}\left(\lambda\tau_0^2\sigma_0^2 + \mu\tau_0\epsilon_0^2\right)2^{\tau_0l / 2}$,
whereas in the case $\eta_1=\eta_2=\frac{\tau_0l}{2}+1$ we have
\begin{align*} 
\mathcal{B}_j &\leq  \exp(M) (j+1+\alpha)^{- \frac{\tau_0 l}{2}} \sum_{i=1}^j  \left(\aaa\tau_0^2 \sigma_0^2 + \bbb\tau_0 \epsilon_0^2 \right)2^{\tau_0 l/2}(i+\alpha)^{-1}\\
&\leq  \exp(M)(j+1+\alpha)^{- \frac{\tau_0 l}{2}}\left(\aaa\tau_0^2 \sigma_0^2  + \bbb\tau_0 \epsilon_0^2 \right)\left(\log(j+1+\alpha) - \log(1+\alpha)\right)2^{\tau_0 l/2}\\
& \leq C_2 \log(j+\alpha) (j+\alpha)^{-\frac{\tau_0 l}{2}},
\end{align*}
for $j+\alpha \geq 2$, where we have exploited the fact that the function $\log x^{-\tau_0 l/2}$ is decreasing for $x\geq e^{2/\tau_0l}$.
Observing that the first term $\kappa_{0,j}a_1$ is always dominated by the second term $\mathcal{B}_j$, we obtain the final result with $C=\max\{C_1 a_1, C_2\}$.

\end{proof}
We introduce now some computational cost assumptions in order to assess the performance of the MLMC estimator introduced in Section \ref{MLMC}. Let us assume that the primal and dual problems can be solved using a triangulation with mesh size $h$, in computational time $C_h \lesssim h^{-d \gamma}$. Here, $\gamma \in [1,3]$ is a parameter representing the efficiency of the linear solver used (e.g. $\gamma=3$ for a direct solver and $\gamma=1$ up to a logarithmic factor for an optimal multigrid solver), while $d$ is the dimension of the physical space, $D \subset \R^d$. We can estimate the computational cost $C_l$ to generate one realization of $\nabla f^{h_l}(\uu, \cdot)-\nabla f^{h_{l-1}}(\uu, \cdot)$ by:
$$C_l=Cost(\nabla f^{h_l},\nabla f^{h_{l-1}}) \leq 2 Cost(\nabla f^{h_l})\lesssim h_l^{- \gamma d}\lesssim  2^{l \gamma d}.$$
On the other hand, in the particular context presented in this work and using the results in Lemma \ref{lemma:varlemma}, the variance $V_l$ at level $l$ can be bounded as
$$V_l=\E[\| \nabla f^{h_l}(\uu, \cdot)-\nabla f^{h_{l-1}}(\uu, \cdot) \|^2] \leq 2 \widetilde{C}(\uu)h_l^{2r+2}\lesssim2^{-l(2r+2)}. $$
Hence the costs and variances match the assumptions in Section \ref{MLMC} with $q_s=2r+2$ and $q_c=\gamma d$. For a problem in dimension $d \leq 3$ solved with an optimal solver ($\gamma=1$ up to log-terms) and using piecewise linear finite elements ($r=1$), we fall in the case $q_s>q_c$ and we should expect the MLMC estimator with optimal sample sizes to achieve an optimal complexity $tol^{-2}$ at each iteration. We show in Theorem \ref{corr:complexity2} that this optimality is preserved when MLMC is used within a stochastic gradient method to solve the OCP \eqref{eqn:mother1} when the rates $\eta_1, \eta_2$ are properly chosen. We start with a preliminary result.


\begin{lemma}\label{lemma:thmglobal}
For $\gamma$ such that the cost $C_l$ of computing one realization of $\nabla f^{h_l}(u, \cdot)-\nabla f^{h_{l-1}}(u, \cdot)$ is bounded by $C_l \lesssim 2^{l \gamma d}$, and choosing
\begin{itemize}
\item the step-size $\tau_j=\frac{\tau_0}{j+\alpha}$ with $\tau_0>\frac{2}{l}$; 
\item the sequence of levels $\{L_j\}_j$ such that
$$L_j=\left\lceil \frac{-1}{\log(2)} \log\left( \frac{1}{h_0}\left(\frac{\epsilon_0^2 (j+\alpha)^{1-\eta_2}}{ C(\uu)}\right)^{\frac{1}{2r+2}} \right)    \right\rceil,$$ for some $\eta_2 \in ] 1,  \frac{\tau_0l}{2}+1]$ and $\epsilon_0^2 < h_0^{2r+2}C(u^*)$, so that the bias term in \eqref{convepsilon} satisfies $\epsilon_j^2 \leq \epsilon_0^2 (j+\alpha)^{1-\eta_2}$, for all $j$;
\item the sequence of sample sizes $\{N_{j,l}\}_{\{j,l\}}$ as
$$N_{j,l}=\left\lceil \Upsilon(L_j) 2^{-l\frac{2r+2+\gamma d}{2}} (j+\alpha)^{\eta_1-2}\right\rceil$$
with $\Upsilon(L_j)=2 \widetilde{C}(\uu)\sigma_0^{-2} h_0^{2r+2}\sum_{k=0}^{L_j}2^{-k\frac{2r+2-\gamma d}{2}}$ for some $\eta_1 \in ] 2, \frac{\tau_0l}{2}+1 ]$ so that the variance term in \eqref{convsigma} satisfies $\sigma_j^2 \leq \sigma_0^2 (j+\alpha)^{2-\eta_1}$, for all $j$;
\end{itemize}
then, 
using the MLMC estimator $\Emi$ in (\ref{eq:MLSG_alg}) at each iteration $i=1, \dots, j$, the total required computational work $W_j$ to compute $u_j$, is bounded by:
\begin{eqnarray}
W_j \lesssim \frac{2^\phi}{\phi}(j+\alpha)^{\phi}\left(1 + (\eta_2-1)\log(j+\alpha)\right)^{\delta}
\end{eqnarray}
with 
\begin{eqnarray}
\phi = \left\{\begin{array}{ll} \max\{\eta_1-2, (\eta_2-1)\frac{\gamma d}{2r+2}\} + 1,\ \delta = 0, & \text{for}\ 2r+2 > \gamma d \\
\max\{\eta_1-1, \eta_2\},\ \delta = 2, & \text{for}\ 2r+2 = \gamma d \\ 
\max\{\eta_1-\eta_2-1 + \frac{(\eta_2-1)\gamma d}{2r+2}, \frac{(\eta_2-1)\gamma d}{2r+2}\} + 1,\ \delta = 0, & \text{for}\ 2r+2 < \gamma d \end{array} \right.,
\end{eqnarray}
where the hidden constant does not depend on 
$\eta_1$, $\eta_2$.
\end{lemma}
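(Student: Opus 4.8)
The plan is to write the total work as the accumulated cost of running the MLMC estimator $\Emi$ at every iteration, $W_j = \sum_{i=1}^j \sum_{l=0}^{L_i} N_{i,l}\, C_l$, and then to estimate this double sum separately in the three regimes $2r+2 \gtrless \gamma d$. First I would insert the cost bound $C_l \lesssim 2^{l\gamma d}$ and deal with the ceiling in the definition of $N_{i,l}$ through $\lceil x\rceil \leq x+1$; this splits the per-iteration cost $W_i^{\mathrm{iter}}:=\sum_{l=0}^{L_i}N_{i,l}C_l$ into a \emph{natural} part $\sum_{l=0}^{L_i} \Upsilon(L_i)\,2^{-l(2r+2+\gamma d)/2}(i+\alpha)^{\eta_1-2}\,2^{l\gamma d}$ and a \emph{ceiling overhead} $\sum_{l=0}^{L_i}2^{l\gamma d}$ coming from the levels on which $N_{i,l}$ is forced down to $1$.

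For the natural part, substituting the definition of $\Upsilon(L_i)$ and cancelling exponents reduces the $l$-sum, up to the constant $2\widetilde{C}(\uu)\sigma_0^{-2}h_0^{2r+2}$, to $(i+\alpha)^{\eta_1-2}$ times the square of the geometric-type sum $\Sigma_i:=\sum_{l=0}^{L_i}2^{-l(2r+2-\gamma d)/2}$. The key is to evaluate $\Sigma_i$ in the three regimes: it is bounded by a constant when $2r+2>\gamma d$, equals $L_i+1$ when $2r+2=\gamma d$, and grows like $2^{L_i(\gamma d-2r-2)/2}$ when $2r+2<\gamma d$. For the ceiling overhead, the dominant term of the geometric sum $\sum_{l=0}^{L_i}2^{l\gamma d}$ is $\lesssim 2^{L_i\gamma d}$.

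Both $\Sigma_i$ and the overhead are then controlled through the explicit choice of $L_i$: from its defining formula one reads off $2^{L_i}\sim (i+\alpha)^{(\eta_2-1)/(2r+2)}$, whence $2^{L_i\gamma d}\sim (i+\alpha)^{(\eta_2-1)\gamma d/(2r+2)}$ and $L_i+1 \lesssim 1+(\eta_2-1)\log(i+\alpha)$. Feeding these into the two parts and retaining the larger contribution yields, in each regime, a per-iteration bound of the form $W_i^{\mathrm{iter}}\lesssim (i+\alpha)^{\phi-1}\bigl(1+(\eta_2-1)\log(i+\alpha)\bigr)^{\delta}$ with $\phi-1$ and $\delta$ exactly as in the statement: the $\max$ inside $\phi$ records which of the natural part and the overhead dominates, and $\delta=2$ arises precisely in the balanced case $2r+2=\gamma d$ through the factor $(L_i+1)^2=\Sigma_i^2$.

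Finally I would sum over $i$. Since $\phi-1>0$ in every regime (using $\eta_1>2$ and $\eta_2>1$), the monotone sum is controlled by an integral, $\sum_{i=1}^j (i+\alpha)^{\phi-1}\le \int_1^{j+1}(x+\alpha)^{\phi-1}\,\dd x \le (j+1+\alpha)^{\phi}/\phi \le \tfrac{2^{\phi}}{\phi}(j+\alpha)^{\phi}$, which is exactly the source of the prefactor $2^{\phi}/\phi$; pulling the slowly varying logarithmic factor out of the sum produces the $(1+(\eta_2-1)\log(j+\alpha))^{\delta}$ term. The main obstacle I anticipate is the bookkeeping in the regime $2r+2<\gamma d$, where both the natural part (per-iteration exponent $\eta_1-\eta_2-1+(\eta_2-1)\gamma d/(2r+2)$, obtained by multiplying $\Sigma_i^2\sim(i+\alpha)^{(\eta_2-1)(\gamma d-2r-2)/(2r+2)}$ by $(i+\alpha)^{\eta_1-2}$) and the ceiling overhead (exponent $(\eta_2-1)\gamma d/(2r+2)$) contribute genuinely competing powers of $(i+\alpha)$, so one must take their $\max$ and verify it reproduces the stated $\phi$ after the final $+1$ from the $i$-summation; a secondary care point is keeping every hidden constant independent of $\eta_1,\eta_2$, so that the whole dependence on the decay rates is exposed through $2^{\phi}/\phi$ and $\delta$.
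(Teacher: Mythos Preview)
Your proposal is correct and mirrors the paper's own proof almost step by step: the same split of $\sum_{l}N_{i,l}C_l$ via $\lceil x\rceil\le x+1$ into a ``natural'' part $(i+\alpha)^{\eta_1-2}\Sigma_i^2$ and a ceiling overhead $\lesssim 2^{\gamma d L_i}$, the same three-regime evaluation of $\Sigma_i$, the same substitution $2^{L_i}\sim(i+\alpha)^{(\eta_2-1)/(2r+2)}$, and the same integral comparison producing the $2^{\phi}/\phi$ prefactor. The only cosmetic difference is that the paper writes out the per-iteration bound $\widehat{W}_j$ first and then sums, while you phrase the whole thing as a double sum from the start; the arithmetic and the handling of each regime are identical.
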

\begin{proof}
For the MLMC estimator $E_{L_j, \overrightarrow{N}_j}^{\textrm{MLMC}}$ with optimal $\overrightarrow{N}_j$ and $L_j$, the computational cost $\widehat{W}_j$ at iteration $j$ can be bounded as
\begin{align*} \label{costsingle}
\widehat{W_j} =\sum_{l=0}^{L_j}C_lN_{j,l} & \lesssim \sum_{l=0}^{L_j} 2^{l \gamma d}\left( 1+\left(2^{-l \frac{2r+2+\gamma d}{2}}(j+\alpha)^{\eta_1-2}\sum_{k=0}^{L_j} 2^{-k \frac{2r+2-\gamma d}{2}}\right)\right)\\
&\lesssim (j+\alpha)^{\eta_1-2}\left(\sum_{k=0}^{L_j}2^{-k\frac{2r+2-\gamma d}{2}}\right)^2+\sum_{l=0}^{L_j}2^{l \gamma d}\\
&\lesssim (j+\alpha)^{\eta_1-2}  \left(\sum_{k=0}^{L_j} 2^{-k \left(\frac{2r+2-\gamma d}{2}\right)}\right)^2+2^{\gamma d L_j},
\end{align*}
with hidden constant independent of $\eta_1, \eta_2$.
The first inequality just recalls that $N_{j,l}$ must be an integer, so the optimal value is rounded up to the nearest integer and can not be smaller than 1. Next, we consider the following cases separately: \\
a) $2r+2 > \gamma d$:
\begin{align*}
\widehat{W}_j & \lesssim (j+\alpha)^{\eta_1-2}+2^{\gamma d L_j}\\
& \lesssim (j+\alpha)^{\eta_1-2}+(j+\alpha)^{\frac{(\eta_2-1)\gamma d}{2r+2}}
\end{align*}
and the total cost for computing $u_j$ is
$$W_j=\sum_{i=1}^{j} \widehat{W_i} \lesssim \frac{2^{\phi}}{\phi}{(j+\alpha)}^{\phi},$$
with $\phi = \max \{ \eta_1-2,(\eta_2-1)\frac{\gamma d}{2r+2} \}+1$.\\
b) $2r+2 = \gamma d$:
\begin{align*}
\widehat{W}_j & \lesssim (j+\alpha)^{\eta_1-2} (L_j+1)^2 + 2^{\gamma d L_j} \\
& \lesssim (j+\alpha)^{\max\{\eta_1-1,\eta_2 \} - 1}\left(1 + \log (j+\alpha)^{\eta_2 - 1}\right)^2,
\end{align*}
which gives a total cost 
$$W_j =\sum_{i=1}^{j} \widehat{W_i} \lesssim \frac{2^\phi}{\phi}(j+\alpha)^{\phi}\left(1 + \log (j+\alpha)^{\eta_2-1}\right)^2,$$
with $\phi = \max\{\eta_1-1, \eta_2\}$.\\
c) $2r+2 < \gamma d$:
\begin{align*}
\widehat{W}_j & \lesssim (j+\alpha)^{\eta_1-2} 2^{L_j(\gamma d -2r - 2)} + 2^{\gamma d L_j} \\
& \lesssim (j+\alpha)^{\eta_1-2}(j+\alpha)^{(\eta_2-1)\frac{\gamma d -2r -2}{2r+2}} + (j+\alpha)^{\frac{(\eta_2-1)\gamma d}{2r+2}} \\
& \lesssim (j+\alpha)^{\phi - 1},
\end{align*}
with $\phi = \max\{\eta_1-\eta_2-1 + \frac{(\eta_2-1)\gamma d}{2r+2}, \frac{(\eta_2-1)\gamma d}{2r+2}\} + 1$ and the total cost becomes 
$$ W_j = \sum_{i=1}^j \widehat{W}_i \lesssim \frac{2^{\phi}}{\phi}(j+\alpha)^{\phi},$$
which completes the proof.
\end{proof}

We are now ready to analyze the asymptotic complexity of the MLSG scheme \eqref{eq:MLSG_alg} as well as the optimal choice of parameters $(\eta_1, \eta_2)$ within the set $\mathcal{S}=]2,\frac{\tau_0 l}{2}+1]\times ]1,\frac{\tau_0 l}{2}+1]$. For this we choose the interation count $j$ so that the bound on the MSE $a_j= \E[\| u_j-\uu \|^2]$ given in Lemma \ref{lemma:assumptionconvergence} is smaller than $tol^2$, i.e. setting $\psi(\eta_1,\eta_2)=\min\{\eta_1,\eta_2\}-1$ and $\mathcal{S}_{ad}:=\mathcal{S}\setminus \left\{\left(\frac{\tau_0 l}{2}+1,\frac{\tau_0 l}{2}+1\right)\right\}$, we take
        \begin{align*}
          j + \alpha &\sim \left(\frac{tol^{2} (\frac{\tau_0 l}{2}-\psi(\eta_1,\eta_2))}{C}\right)^{-\frac{1}{\psi(\eta_1,\eta_2)}}, && (\eta_1,\eta_2)\in \mathcal{S}_{ad},\\
          j+\alpha &\sim  \left(\frac{\tau_0 l}{8C} \frac{tol^{2}}{\log(tol^{-1})}\right)^{-\frac{2}{\tau_0 l}}, && \eta_1=\eta_2= \frac{\tau_0 l}{2}+1.
        \end{align*}
        
        With this choice of $j$ we have the following bound on the overall cost from Lemma \ref{lemma:thmglobal}:
        \begin{itemize}
          \item in the case $(\eta_1,\eta_2)\in \mathcal{S}_{ad}$
          \begin{equation}\label{eq:cost_sad}
          \overline{\mathcal{W}}(tol;\eta_1,\eta_2) = tol^{-\frac{2\phi(\eta_1,\eta_2)}{\psi(\eta_1,\eta_2)}} B(\eta_1,\eta_2)\left(C(\eta_1,\eta_2)+\frac{\eta_2-1}{\psi(\eta_1,\eta_2)}\log(tol^{-2})\right)^\delta,
          \end{equation}
          with $B(\eta_1,\eta_2)=\frac{2^\phi}{\phi}\left(\frac{\frac{\tau_0 l}{2}-\psi}{C}\right)^{-\frac{\phi}{\psi}}$ and $C(\eta_1,\eta_2)=1-\frac{\eta_2-1}{\psi(\eta_1,\eta_2)}\log(\frac{\frac{\tau_0 l}{2}-\psi}{C})$.
        \item in the case $\eta_1=\eta_2=\frac{\tau_0 l}{2}+1$
          \begin{equation} \label{eq:cost_corner}
            \overline{\mathcal{W}}(tol;\eta_1,\eta_2) = \left(tol^{-2} \log(tol^{-1})\right)^{\frac{\phi(\eta_1,\eta_2)}{\psi(\eta_1,\eta_2)}}\tilde{B}\left(\tilde{C} + \log(tol^{-2}\log(tol^{-1}))\right)^\delta,
            \end{equation}
            with $\tilde{B} = \frac{2^\phi}{\phi}\left(\frac{\tau_0 l}{8C}\right)^{-\frac{\phi}{\psi}}$ and $\tilde{C} = 1- \log(\frac{\tau_0 l}{8C})$.
          \end{itemize}
        We now say that a choice $(\eta_1^*,\eta_2^*)$ is \emph{asymptotically optimal} if
        \begin{equation}\label{eq:optimality}
        \liminf_{tol\to 0} \frac{\overline{\mathcal{W}}(tol;\eta_1,\eta_2)}{\overline{\mathcal{W}}(tol;\eta_1^*,\eta_2^*)} \ge 1, \qquad \forall (\eta_1,\eta_2)\in\mathcal{S}.
        \end{equation}
        The next theorem provides an asymptotically optimal choice of $(\eta_1,\eta_2)$ and states the resulting complexity of the MLSG scheme \eqref{eq:MLSG_alg}.
        



\begin{theorem}\label{corr:complexity2}
With the same notation and assumptions as in Lemma \ref{lemma:thmglobal}, given $\tau_0 > 2 / l$, the following choices of $(\eta_1, \eta_2)$ are asymptotically optimal when $tol \to 0$, according to definition \eqref{eq:optimality}, leading to the corresponding computational work $\mathcal{W}(tol)$ of the MLSG scheme (\ref{eq:MLSG_alg}), required to reach $MSE = O(tol^2)$:\\
a1) if $2r+2 > \gamma d$ and $\tau_0 > \frac{2}{l}\frac{2r+2}{2r+2-\gamma d}$, choose 
\begin{equation*}
\eta_1 = \eta_2 = 1 + \frac{2r+2}{2r+2 - \gamma d},
\end{equation*}
leading to $$\mathcal{W}(tol) \lesssim tol^{-2}.$$
a2) if $2r+2 > \gamma d$ and $\frac{2}{l} < \tau_0 < \frac{2}{l}\frac{2r+2}{2r+2-\gamma d}$, choose 
\begin{eqnarray*}
\begin{array}{ll} \eta_1 = \eta_2 = \frac{\tau_0 l}{2} + 1, \end{array}
\end{eqnarray*}
leading to 
$$\mathcal{W}(tol) \lesssim \left(tol^{-2}\log tol^{-1} \right)^{\frac{2}{\tau_0 l}+ \frac{\gamma d}{2r+2}}.$$
b) if $2r+2 = \gamma d$, choose 
\begin{eqnarray*}
\begin{array}{ll}\eta_1 = \eta_2 =  \frac{\tau_0l}{2} + 1\end{array},
\end{eqnarray*}
leading to $$\mathcal{W}(tol) \lesssim \left(tol^{-2}\log tol^{-1}\right)^{1 + \frac{2}{\tau_0l}} \left( \log \left( tol^{-2} \log (tol^{-1})\right)\right)^2.$$
c) if $2r+2 < \gamma d$, choose 
\begin{eqnarray*}
\begin{array}{ll}\eta_1 = \eta_2 =  \frac{\tau_0 l}{2} + 1\end{array},
\end{eqnarray*}
leading to $$\mathcal{W}(tol) \lesssim tol^{-2\left(\frac{2}{\tau_0l-2} + \frac{\gamma d}{2r+2}\right)}.$$
\end{theorem}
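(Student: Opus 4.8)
The plan is to read the complexity directly off the two cost expressions \eqref{eq:cost_sad} and \eqref{eq:cost_corner} derived just above the statement. In both, the leading power of $tol^{-1}$ is governed entirely by the ratio $\phi(\eta_1,\eta_2)/\psi(\eta_1,\eta_2)$, where $\psi=\min\{\eta_1,\eta_2\}-1$ and $\phi$ is the regime-dependent exponent from Lemma \ref{lemma:thmglobal}; the factor $\delta$ and the switch to \eqref{eq:cost_corner} only affect polylogarithmic corrections. Hence verifying the asymptotic optimality condition \eqref{eq:optimality} reduces to a purely deterministic minimization: among all $(\eta_1,\eta_2)\in\mathcal{S}$, select those minimizing $\phi/\psi$, and among those the one carrying the fewest polylogarithmic factors. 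First I would record that, in each of the three regimes, $\phi$ is a maximum of two affine functions of $(\eta_1,\eta_2)$, hence piecewise linear.

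The first reduction I would carry out is to the diagonal $\eta_1=\eta_2=:\eta$. The point is that for a fixed value $\psi=m$ the numerator $\phi$ is minimized by taking $\eta_1=\eta_2=m+1$: raising either coordinate above the common minimum can only enlarge the active branch of the $\max$ defining $\phi$, as is immediate from the explicit affine branches listed in Lemma \ref{lemma:thmglobal}. On the diagonal $\psi=\eta-1$ and $\phi$ collapses to a single-variable piecewise-linear function, so minimizing $\phi/\psi$ over $\eta-1\in\,]1,\tfrac{\tau_0 l}{2}]$ becomes elementary.

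I would then treat the three regimes, writing $\rho=\tfrac{\gamma d}{2r+2}$. For $2r+2>\gamma d$ ($\rho<1$) the diagonal objective equals $\rho+1/(\eta-1)$ to the left of the kink $\eta-1=\tfrac{1}{1-\rho}=\tfrac{2r+2}{2r+2-\gamma d}$ and equals the constant $1$ to its right; thus if the kink is admissible, i.e.\ $\tau_0>\tfrac{2}{l}\tfrac{2r+2}{2r+2-\gamma d}$ (case a1), the minimal exponent is $1$, attained at the kink, and \eqref{eq:cost_sad} with $\phi/\psi=1$, $\delta=0$ gives $\mathcal{W}\lesssim tol^{-2}$; otherwise (case a2) the objective is strictly decreasing on the whole interval, its minimum sits at the right endpoint $\eta=\tfrac{\tau_0 l}{2}+1$ (the corner), where $\phi/\psi=\rho+\tfrac{2}{\tau_0 l}$, and \eqref{eq:cost_corner} produces the stated polylogarithmic rate. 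In the critical case $2r+2=\gamma d$ and the case $2r+2<\gamma d$ the diagonal objective is again of the form $c+1/(\eta-1)$, strictly decreasing, so its minimum is pushed to the corner $\eta_1=\eta_2=\tfrac{\tau_0 l}{2}+1$; here I would substitute the corresponding $\phi$, $\psi$ and the value of $\delta$ into \eqref{eq:cost_corner}, invoking the logarithmic branch of the MSE bound in Lemma \ref{lemma:assumptionconvergence} that is active at the corner.

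The final step is to confirm \eqref{eq:optimality}. Since the chosen point realizes the minimal value $(\phi/\psi)^*$ over $\mathcal{S}$, for any competitor the cost ratio scales like $tol^{-2(\phi/\psi-(\phi/\psi)^*)}$ up to polylogarithmic factors, with nonnegative exponent, so the $\liminf$ is $\ge 1$. The hard part, I expect, will be precisely this endpoint/corner analysis: at $\eta_1=\eta_2=\tfrac{\tau_0 l}{2}+1$ one leaves $\mathcal{S}_{ad}$, so the clean formula \eqref{eq:cost_sad} must be replaced by the degenerate one \eqref{eq:cost_corner}, and one has to check that no off-diagonal boundary competitor — which carries a log-free algebraic cost $tol^{-2\phi/\psi}$ — asymptotically beats the corner's polylogarithmically inflated rate $\bigl(tol^{-2}\log tol^{-1}\bigr)^{(\phi/\psi)^*}$. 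Carrying out this comparison of exponents separately in each regime, and keeping track of which points actually attain the infimum versus only approach it, is the delicate bookkeeping on which the statement rests.
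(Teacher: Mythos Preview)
Your proposal is correct and follows essentially the same route as the paper: reduce the asymptotic optimality question to minimizing the exponent ratio $\phi/\psi$, observe that on each level set $\psi=c$ the numerator $\phi$ is minimized on the diagonal $\eta_1=\eta_2$, collapse to a one-variable problem, and then treat the three regimes separately, handling the corner $\eta_1=\eta_2=\tfrac{\tau_0 l}{2}+1$ via the degenerate cost formula \eqref{eq:cost_corner}. The paper carries out the same steps, using a geometric isoline argument (Figures \ref{fig:isocurves_i} and \ref{isocurves_ii}) for the diagonal reduction where you argue algebraically; both are fine.

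One point worth sharpening in your write-up: in case a1 the set of minimizers of $\phi/\psi$ is not a single point but a two-dimensional region $\Sigma_1$ (everything ``at and to the right of the kink''), and on this region $\delta=0$ throughout, so the tiebreaker in \eqref{eq:optimality} is not a polylogarithmic factor but the constant $B(\eta_1,\eta_2)=\tfrac{2^\phi}{\phi}\bigl(\tfrac{\tau_0 l/2-\psi}{C}\bigr)^{-1}$. The paper checks explicitly that $B$ is minimized at the kink $\bar\eta=1+\tfrac{2r+2}{2r+2-\gamma d}$, which is what singles out this specific value as the asymptotically optimal choice rather than any other point in $\Sigma_1$; your phrase ``among those the one carrying the fewest polylogarithmic factors'' does not capture this, and without the $B$-comparison the liminf in \eqref{eq:optimality} could in principle be a constant strictly below $1$ for some competitor with the same exponent.
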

\begin{proof}
We want to find the best choice $(\eta_1, \eta_2) \in \Sad$ that minimizes the total computational work $\mathcal{W}(tol; \eta_1, \eta_2)$ for $(\eta_1, \eta_2) \in \Sad$. This translates to solving the optimization problem
\begin{equation}\label{eq:cost_a1}
\inf\limits_{ (\eta_1, \eta_2) \in \Sad} tol^{-\frac{2\phi(\eta_1,\eta_2)}{\psi(\eta_1,\eta_2)}} B(\eta_1,\eta_2)\left(C(\eta_1,\eta_2)+\frac{\eta_2-1}{\psi(\eta_1,\eta_2)}\log(tol^{-2})\right)^\delta.
\end{equation}
We simplify this optimization problem by minimizing only the leading exponent 
\begin{equation}
\inf\limits_{(\eta_1,\eta_2) \in \Sad} \frac{\phi(\eta_1, \eta_2)}{\psi(\eta_1, \eta_2)},
\end{equation}
This can be equivalently written as
\begin{equation} \label{eq:phioverpsi}
\inf\limits_{c \in \left]0,\frac{\tau_0 l}{2}\right[} \inf\limits_{\substack{(\eta_1, \eta_2) \in \Sad \\ \psi(\eta_1, \eta_2)=c}}\frac{\phi(\eta_1, \eta_2)}{c}.
\end{equation}
In cases \emph{a1)} and \emph{a2)} the problem reads
\begin{equation}
\label{eq:cost_a1a2}
\inf\limits_{c \in \left]0,\frac{\tau_0 l}{2}\right[} \inf\limits_{\substack{(\eta_1, \eta_2) \in \Sad \\ \min \{\eta_1, \eta_2 \}-1=c}}\frac{1+\max\{ \eta_1-2,(\eta_2-1)\frac{\gamma d}{2r+2} \}}{c}.
\end{equation}

\begin{figure}[!ht]
\centering
\includegraphics[width = 0.6\textwidth]{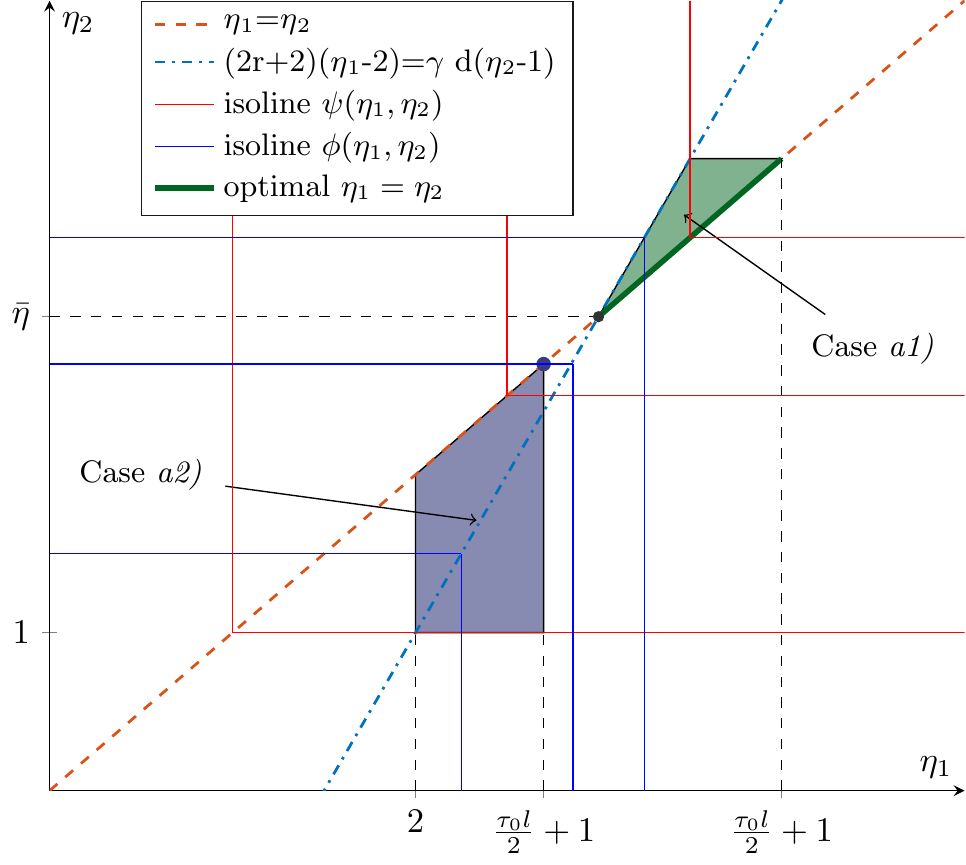}
\caption{Isocurves for problem \eqref{eq:phioverpsi}, cases \emph{a1)} and \emph{a2)} on the plane $(\eta_1, \eta_2)$.}
\label{fig:isocurves_i}
\end{figure}

Figure \ref{fig:isocurves_i} shows the isolines of the function $(\eta_1, \eta_2) \mapsto \psi(\eta_1,\eta_2)$ in solid red, which are L-shaped with corners on the bisectrice $\eta_1=\eta_2$ (red dashed line), as well as the isolines of the function $(\eta_1, \eta_2) \mapsto \phi(\eta_1,\eta_2)$ in solid blue, which have corners on the line
$$\eta_1-2=(\eta_2-1)\frac{\gamma d}{2r+2} \quad \Leftrightarrow \quad \eta_2=\underbrace{\frac{2r+2}{\gamma d}}_{>1}\eta_1+\underbrace{1-2\frac{2r+2}{\gamma d}}_{<-1}$$
(blue dashed line). The line crosses the bisectrice at $(\bar{\eta},\bar{\eta})$ (black ``$\bullet$") with $\bar{\eta} = 1 + \frac{\gamma d}{2r+2}$, and we distinguish the two different cases \emph{a1), a2)} based on whether the upper boundary of $\Sad$, that is $\frac{\tau_0 l}{2} + 1$ lies above or below $\bar{\eta}$ which yields 
$\tau_0 >\frac{2}{l}\frac{\gamma d}{2r+2}$ or $\tau_0 < \frac{2}{l}\frac{\gamma d}{2r+2}$ respectively. 
For the inner minimization, we see that on an isoline $\psi(\eta_1, \eta_2)=c$, the minimum of $\phi(\eta_1,\eta_2)$ is always achieved when $\eta_1=\eta_2$ (the minimizer is not unique and corresponds to a whole horizontal segment when $c<\bar{\eta}-1$, or a vertical segment when $c>\bar{\eta}-1$, see Figure \ref{fig:isocurves_i}). Finally \eqref{eq:cost_a1a2} can be rewritten as
\begin{equation} \label{prob40}
\inf\limits_{\eta \in \left]2,\frac{\tau_0 l}{2}+1\right[} \frac{1+\max\left\{ \eta-2, (\eta-1)\frac{\gamma d}{2r+2} \right\}}{\eta-1}=\inf\limits_{\eta \in \left]2,\frac{\tau_0 l}{2}+1\right[} \max\left\{ 1, \frac{1+(\eta-1)\frac{\gamma d}{2r+2}}{\eta-1} \right\}.
\end{equation}

Now, in case \emph{a1)} with $\frac{\tau_0 l}{2} + 1 > \bar{\eta}$ (i.e. $\tau_0 > \frac{2}{l}\frac{2r+2}{2r+2-\gamma d}$), then the optimum equals $1$ and is achieved by any $\eta \in \left[ \bar{\eta}, \frac{\tau_0l}{2}+1\right[$ (solid dark green). Using the fact that $\frac{\phi}{\psi} = 1$ and $\delta = 0$, we get $\mathcal{W}(tol) \lesssim \overline{\mathcal{W}}(tol;\eta_1, \eta_2) \lesssim tol^{-2}$ in the area $\Sigma_1 = \left\{(\eta_1, \eta_2): \bar{\eta} \leq \eta_1 < \frac{\tau_0l}{2}+1, \eta_1\leq\eta_2\leq\min\{1 + (\eta_1-2)\frac{2r+2}{\gamma d}, \frac{\tau_0l}{2}+1\}\right\}$ (green area in Fig. \ref{fig:isocurves_i}) and $\mathcal{W}(tol;\eta_1,\eta_2) \lesssim tol^{-\alpha}$, $\alpha > 2$ in $\Sad\setminus\Sigma_1$. Moreover, in $\Sigma_1$, the hidden constant $B(\eta_1, \eta_2) = \frac{2^{\eta_1-1}}{\eta_1-1}\frac{c}{\frac{\tau_0 l}{2}+1-\eta_1}$ is minimized for $\eta_1 = \eta_2 = \bar{\eta}$. Hence, $(\bar{\eta}, \bar{\eta})$ is the only asymptotically optimal choice, with complexity bound $\mathcal{W}(tol; \bar{\eta}, \bar{\eta}) \lesssim tol^{-2}$. From \eqref{eq:cost_corner} it is easy to check that $(\eta_1, \eta_2) = (\frac{\tau_0l}{2}+1, \frac{\tau_0l}{2}+1)$ is not an asymptotically optimal solution either.

On the other hand, in case \emph{a2)} with $2 < \frac{\tau_0l}{2} + 1 < \bar{\eta}$ that is, $\frac{2}{l} < \tau_0 < \frac{2}{l}\frac{2r+2}{2r+2-\gamma d}$, the infimum is achieved when $\eta \to \frac{\tau_0l}{2}+1$ (dark blue ``{\color{darkblue}$\bullet$}"). However, in the limit $\eta \to \frac{\tau_0l}{2}+1$, we have to use the expression for $\mathcal{W}(tol;\eta_1, \eta_2)$ given in \eqref{eq:cost_corner} with $\frac{\phi}{\psi} = \frac{2}{\tau_0l} + \frac{\gamma d}{2r+2}$ to obtain $\mathcal{W}(tol; \frac{\tau_0 l}{2}+1, \frac{\tau_0 l}{2}+1) \lesssim \left(tol^{-2} \log tol^{-1} \right)^{\frac{2}{\tau_0l} + \frac{\gamma d}{2r+2}}$, whereas $\mathcal{W}(tol, \eta_1, \eta_2) \lesssim tol^{-\alpha}$, $\alpha > 2$ for any $(\eta_1,\eta_2) \in \Sad$. Hence $\eta_1 = \eta_2 = \frac{\tau_0l}{2} + 1$ is the only asymptotically optimal solution. \\

For case \emph{b)} we have $\phi(\eta_1, \eta_2) = \max\{\eta_1-1, \eta_2\}$ and the optimization of the leading exponent reads
\begin{equation}
\inf\limits_{c \in \left]0,\frac{\tau_0 l}{2}\right[} \inf\limits_{\substack{(\eta_1, \eta_2) \in \Sad \\ \min \{\eta_1, \eta_2 \}-1=c}} \frac{\max\{\eta_1 - 1, \eta_2\}}{c}
\end{equation}
where the isolines of $\phi(\eta_1, \eta_2)$ have now corners on the line $\eta_2 = \eta_1 - 1$ while those of $\psi(\eta_1, \eta_2)$ are as before and the minimum is achieved on the horizontal line that joins the two lines. Taking again $\eta_1 = \eta_2 = \eta$ gives 
\begin{equation}\label{eq:prob_caseb}
\inf\limits_{\eta \in ]2, \frac{\tau_0l}{2}+1[ } \frac{\eta}{\eta-1} = 1 + \inf\limits_{\eta \in ]2, \frac{\tau_0l}{2}+1[ } \frac{1}{\eta-1}
\end{equation}
and the infimum is attained as $\eta \to \frac{\tau_0 l}{2} + 1$. Again, in the limit point, we use the MSE bound $a_j \lesssim \log(j + \alpha)(j + \alpha)^{-\frac{\tau_0 l}{2}}$ and the corresponding expression for the cost \eqref{eq:cost_corner} is 
\begin{equation}
\mathcal{W}(tol; \frac{\tau_0 l}{2} +1, \frac{\tau_0l}{2}+1) \lesssim \left(tol^{-2}\log tol^{-1}\right)^{1 + \frac{2}{\tau_0l}} \left(\tilde{C} + \log \left( tol^{-2} \log (tol^{-1})\right)\right)^2,
\end{equation}
whereas $\mathcal{W}(tol;\eta_1,\eta_2) \lesssim tol^{-\alpha}\left(\log tol^{-1}\right)^2$ with $\alpha > 2\left(1 + \frac{2}{\tau_0 l}\right)$ for $(\eta_1, \eta_2)\in \Sad$, hence $\eta_1 = \eta_2 = \frac{\tau_0 l}{2} + 1$ is the asymptotically optimal solution.

\begin{figure}[!h]
\centering
\includegraphics[width = 0.6\textwidth]{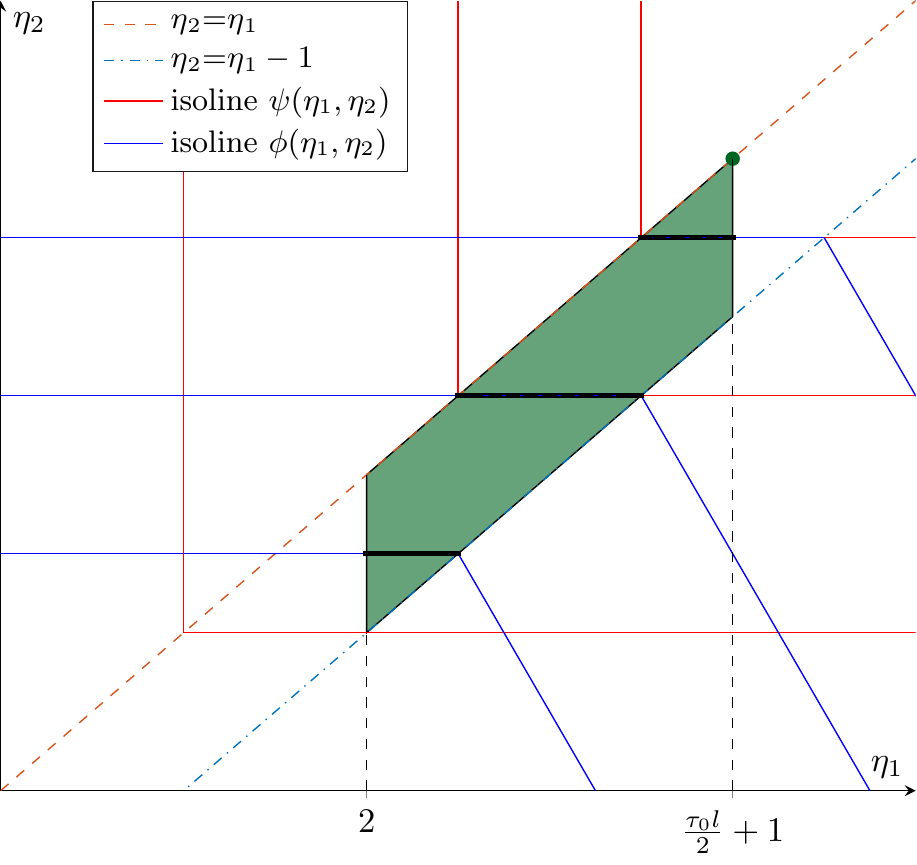}
\caption{Isocurves for problem \eqref{eq:phioverpsi}, \emph{c)} on the plane $(\eta_1, \eta_2)$.}
\label{isocurves_ii}
\end{figure}
At last, in case \emph{c)} the minimization problem becomes 
\begin{equation}
\inf\limits_{c \in \left]0,\frac{\tau_0 l}{2}\right[} \inf\limits_{\substack{(\eta_1, \eta_2) \in \Sad \\ \min \{\eta_1, \eta_2 \}-1=c}}\frac{1 + \max\{ \eta_1-\eta_2 - 1 + \frac{(\eta_2-1)\gamma d}{2r+2}, \frac{(\eta_2-1)\gamma d}{2r+2}\}}{c}.
\end{equation} 
Figure \ref{isocurves_ii} shows the isolines of the functions $(\eta_1, \eta_2) \mapsto \psi(\eta_1,\eta_2)$ and $(\eta_1, \eta_2) \mapsto \phi(\eta_1,\eta_2)$ as before, in red and blue respectively. The ones corresponding to $\phi(\eta_1, \eta_2)$ have corners on the line $ \eta_1 = \eta_2 + 1$ (blue dashed line). It is clear again, that for the $\psi(\eta_1, \eta_2) = c$, the minization of $\phi(\eta_1, \eta_2)$ is achieved on the horizontal segment that joins the lines $\eta_1 = \eta_2$ and $\eta_1 = \eta_2 + 1$ (black solid lines). By setting $\eta_1 = \eta_2$ we rewrite 
\begin{equation}\label{eq:prob45}
\inf\limits_{\eta_1 \in \left]2,\frac{\tau_0 l}{2}+1\right[} \frac{1 + \frac{(\eta_1 - 1)\gamma d}{2r+2} }{\eta_1 - 1} = \frac{\gamma d}{2r+2} + \inf\limits_{\eta_1 \in \left]2,\frac{\tau_0 l}{2}+1\right[}\frac{1}{\eta_1-1}
\end{equation}
which reaches the minimum value $\frac{\gamma d}{2r+2} + \frac{2}{\tau_0 l}$ at $\eta_1 \to \frac{\tau_0 l}{2}+1$. Using again the alternative estimate for the MSE bound, we obtain the cost expression
\begin{equation}
\mathcal{W}(tol;\frac{\tau_0 l}{2}+1, \frac{\tau_0 l}{2}+1) \lesssim \left(tol^{-2}\log tol^{-1}\right)^{1 + \frac{2}{\tau_0l}},
\end{equation}
whereas $\mathcal{W}(tol; \eta_1, \eta_2) \lesssim tol^{-\alpha}$, $\alpha > 2\left(1 + \frac{2}{\tau_0 l}\right)$ for any $(\eta_1, \eta_2) \in \Sad$, hence $\eta_1 = \eta_2 = \frac{\tau_0 l}{2} + 1$ is the asymptotically optimal solution.
 

\end{proof}
\begin{remark}
In the cases a2), b), c), one may conclude that the best complexity is achieved by taking $\tau_0 \to +\infty$. However, a closer inspection to the MSE bound given in Lemma \ref{lemma:assumptionconvergence} reveals that the hidden constant $C := C(\alpha, \tau_0)$ behaves as $C(\alpha, \tau_0) \lesssim \max\{\alpha+1, 2\}^{\tau_0l / 2}$ and grows exponentially in $\tau_0$. This leads to a trivial optimization of the parameters $(\alpha, \tau_0)$ which, however, would depend on the desired tolerance. This is not very convenient in practice and we do not pursue further this path.

\end{remark}

\subsection{Implementation of MLSG} \label{implementation}
Here we present an effective implementation of the MLSG algorithm, in the case $2r+2>\gamma d$.\\ \\ 
\begin{algorithm}[H]
\caption{MLSG algorithm}

\KwData{}
Choose $\eta \geq \frac{2(2r+2)-\gamma d}{2r+2-\gamma d}$, $\tau_0>\frac{2(\eta -1)}{l}, \alpha, h_0, \sigma_0, \epsilon_0$,\\
 generate the sequence $L_j=\left\lceil \frac{-1}{\log(2)} \log\left( \frac{1}{h_0}\left(\frac{\epsilon_0^2 j^{1-\eta}}{ C(\uu)}\right)^{\frac{1}{2r+2}} \right)    \right\rceil$,\\ 
 compute $N_{j,l}=\left\lceil\sigma_0^{-2} j^{\eta-2}2 \widetilde{C}(\uu)h_0^{2r+2}2^{-l\frac{2r+2+\gamma d}{2}}\sum_{k=0}^{L_j}2^{-k\frac{2r+2-\gamma d}{2}}\right\rceil.$\\ 
\textbf{Initialize } 
 $u=0$\;
 \For{$j \geq 1$}{ 
 \textbf{initialize } $\widehat{p}=0$\;
 generate $\prod_{l=0}^{L_j} N_{j,l}$ iid realizations of the random field $a_{l,i}^j=a(\cdot, \omega_{l,i}^j)$, $i=1,\dots, N_{j,l}$, $l=0,\dots, L_j$.\\
 \For{$l=0,\dots, L_j$}{
 \textbf{initialize } $\widehat{p_l}=0$\;
 \For{$i=1,\dots, N_{j,l}$}{
 solve primal problem by FE on mesh $h_{l-1}$ and realization\\ $a_{i,l}^j \rightarrow y^{h_{l-1}}(a_{i,l}^j,u)$ \\
 solve dual problem by FE on mesh $h_{l-1}$ and realization\\ $a_{i,l}^j\rightarrow p^{h_{l-1}}(a_{i,l}^j,y^{h_{l-1}})$\\
 solve primal problem by FE on mesh $h_l$ and realization\\ $a_{i,l}^j\rightarrow y^{h_l}(a_{i,l}^j,u)$ \\
 solve dual problem by FE on mesh $h_l$ and realization\\ $a_{i,l}^j\rightarrow p^{h_l}(a_{i,l}^j,y^{h_l})$\\
 update $\widehat{p_l} \leftarrow \widehat{p_l}+\frac{1}{N_{i,l}}\left( p^{h_l}(a_{i,l}^j,u)- p^{h_{l-1}}(a_{i,l}^j,u)\right)$\\
 }
 update $\widehat{p} \leftarrow \widehat{p}+\widehat{p_l}$\\
 }
 $\widehat{\nabla J} \leftarrow \beta u+ \widehat{p}$\\\
 $u \leftarrow u-\frac{\tau_0}{j+\alpha} \widehat{\nabla J}$\\
 }
\label{alg:algMLSG}
\end{algorithm}
The MLSG algorithm requires estimating the constants $C(\uu)$ and $\widetilde{C}(\uu)$ introduced in Lemmas \ref{lemma:biaslemma} and \ref{lemma:varlemma}. This could be done by a screening phase replacing the optimal control $\uu$ by e.g. the initial control $u_1$. Hence, for instance, for $M$ iid initial random inputs $\widetilde{\omega}_j, j=1, \dots, M$ distributed as $\mathbb{P}$ on $\Gamma$, we could estimate
$$\widehat{E}_l=\frac{1}{M}\sum_{j=1}^M\left\| \nabla f^{h_l}(u_1,\widetilde{\omega}_j)-\nabla f^{h_{l-1}}(u_1,\widetilde{\omega}_j) \right\|^2$$
and then approximate the constant $C(\uu)$ by least squares fit of the model $\widehat{E}_l=C(\uu) h_l^{2r+2}$. For the constant $\widetilde{C}(\uu)$ we have always taken in our simulation $\widetilde{C}(\uu)=C(\uu)$.
Notice that the choice of the constants $C(\uu)$ and $\widetilde{C}(\uu)$ does not affect the asymptotic complexity result.
A good choice of such constants, however, leads to a good balance of the error contributions in the MLMC estimator, notably its bias and variance.
On the same vein, the parameters $\sigma_0$ and $\epsilon_0$ should be chosen so that the two error contributions in the recurrence \eqref{RecForm}, namely $\aaa \tau_0^2 \sigma_0^2 j^{-\eta}$ and $\bbb \tau_0 \epsilon_0^2 j^{-\eta}$ are equilibrated. One such choice would be to fix $\sigma_0=\sqrt{\frac{\bbb}{\aaa  \tau_0}} \epsilon_0$.

\section{Randomized multilevel Stochastic Gradient (RMLSG) algorithm} \label{randomized}

We present in this section a modified version of the MLSG algorithm, that replaces the evaluation of the full MLMC estimator, at each iteration, by a single level estimator where the level used in the computation of the steepest direction is selected randomly. This corresponds to using at each iteration the randomized MLMC algorithm proposed in \cite{RheeGlynn2015,RheeGlynn2012} (see also \cite{Giles}). Specifically, at each iteration $j$, we sample an indice $l_j$ following a suitable discrete probability measure on $\{ 0, \dots, L_j \}$ with probability mass function $\{ p_l^j \}_{l=0}^{L_j}$ possibly changing at each iteration, and we set
\begin{equation} \label{randnewrec}
u_{j+1}=u_j-\tau_j \Er[\nabla f(u_j,\cdot)], \quad j \geq 1, \quad u_1 \in Y^{h_0}, 
\end{equation}
with
\begin{equation} \label{RandMLMC}
\Er[\nabla f(u_j,\cdot)]:=\frac{1}{p^j_{l_j}}\left(\nabla f^{h_{l_j}}(u_j,\omega_j)-\nabla f^{h_{l_j-1}}(u_j,\omega_j)\right),\quad l_j \sim  \{ p_l^j \}, \quad \omega_j \sim \mathbb{P},
\end{equation}
where all random variables $\left\{ l_j, \omega_j \right\}_{j \geq 1}$ are mutually independent.
We recall now from \cite{Giles, RheeGlynn2012, RheeGlynn2015} that the optimal choice for the discrete probability mass function $\{ p_l^j \}_{l=0}^{L_j}$, under the condition $q_s > q_c$ is
$
 p_l^{j}=2^{-l \frac{q_s+q_c}{2}}\left(\sum_{k=0}^{L_j}2^{-k \frac{q_s+q_c}{2}}\right)^{-1}
$
 which, in our setting with $q_s=2r+2$ and $q_c=\gamma d$ reads
\begin{equation} \label{probab}
 p_l^{j}=2^{-l \frac{2r+2+\gamma d}{2}}\left(\sum_{k=0}^{L_j}2^{-k \frac{2r+2+\gamma d}{2}}\right)^{-1}.
\end{equation}

\begin{remark}
The formula \eqref{probab} allows one to take $L_j=\infty$, $\forall j \geq 1$ when $2r+2 > \gamma d$. This leads to an unbiased estimator \eqref{RandMLMC}, and corresponds to the estimator proposed in \cite{RheeGlynn2012, RheeGlynn2015}. However, in this work, we prefer the biased version $L_j < \infty$, $\forall j \geq 1$ with $L_j$ suitably increasing in $j$, as it leads to a RMLSG algorithm with smaller variance of the computational cost (see Theorem \ref{thVar} below).
\end{remark}
The next Lemma quantifies the bias of the estimator \eqref{RandMLMC} for a fixed control $u$.
\begin{lemma} \label{randbiasbound}
For any $L_j \geq 0$ and any probability mass function $\left\{  p_l^j\right\}_{l=0}^{L_j}$ on $\{ 0, \dots, L_j \}$, we have
\begin{equation}
\E \left[ \Er[\nabla f(u,\cdot)] \right] = \E \left[ \nabla f^{h_{L_j}}(u,\cdot)\right].
\end{equation}
\end{lemma}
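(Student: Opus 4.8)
The plan is to compute the expectation directly, exploiting the mutual independence of the randomly selected level index $l_j$ and the sample $\omega_j$, together with the telescoping structure of the multilevel increments. First I would apply the tower property, conditioning on the value of $l_j$: since $l_j$ and $\omega_j$ are independent and $l_j$ takes the value $l$ with probability $p_l^j$, the expectation over the pair $(l_j,\omega_j)$ splits as
\begin{equation*}
\E\left[\Er[\nabla f(u,\cdot)]\right] = \sum_{l=0}^{L_j} p_l^j \, \E\left[\frac{1}{p_l^j}\left(\nabla f^{h_l}(u,\cdot) - \nabla f^{h_{l-1}}(u,\cdot)\right)\right],
\end{equation*}
where the inner expectation is now taken only over $\omega_j \sim \mathbb{P}$ with the level frozen at $l$.

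The crucial observation is that the importance-sampling weight $1/p_l^j$ built into the definition \eqref{RandMLMC} of the estimator cancels exactly against the probability $p_l^j$ of drawing level $l$. After this cancellation the sum reduces to
\begin{equation*}
\sum_{l=0}^{L_j} \left(\E[\nabla f^{h_l}(u,\cdot)] - \E[\nabla f^{h_{l-1}}(u,\cdot)]\right),
\end{equation*}
which telescopes. Using the standard MLMC convention $\nabla f^{h_{-1}} \equiv 0$ (i.e.\ $P_{-1} = 0$ as fixed in Section \ref{MLMC}), all intermediate terms cancel and only the finest contribution $\E[\nabla f^{h_{L_j}}(u,\cdot)]$ survives, which is precisely the claimed identity.

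There is essentially no genuine obstacle here: the statement is a direct consequence of the unbiasedness of importance sampling combined with the telescoping property of the multilevel differences. The only point requiring minor care is the bookkeeping at the coarsest level, namely the convention $\nabla f^{h_{-1}} = 0$, which guarantees that the $l=0$ term contributes $\E[\nabla f^{h_0}(u,\cdot)]$ rather than a spurious correction; this is exactly the convention already adopted when introducing the MLMC estimator in \eqref{Em}.
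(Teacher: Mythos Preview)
Your proof is correct and follows essentially the same approach as the paper: conditioning on the value of $l_j$, cancelling the importance-sampling weight $1/p_l^j$ against $\mathbb{P}(l_j=l)=p_l^j$, and then telescoping the resulting sum. The paper's proof is slightly more terse but the argument is identical in substance.
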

\begin{proof}
Conditioning on the value taken by the random variable $l_j$, we have
\begin{align*}
\E \left[ \Er[\nabla f(u,\cdot)] \right]&=\sum_{l=0}^{L_j} \E \left[\frac{1}{p^j_{l_j}}\left(\nabla f^{h_{l_j}}(u,\cdot)-\nabla f^{h_{l_j-1}}(u,\cdot)\right) |{{l_j}}=l \right] \underbrace{\mathbb{P}({{l_j}}=l)}_{=p_l^j}\\
&=\sum_{l=0}^{L_j} \E \left[ \nabla f^{h_l}(u,\cdot)-\nabla f^{h_{l-1}}(u,\cdot)  \right] \\
&=\E \left[ \nabla f^{h_{L_j}}(u,\cdot)\right].
\end{align*}
\end{proof}

So we conclude that the estimator \eqref{RandMLMC} has the same bias as the full MLMC estimator \eqref{MLMCestim}. 

\subsection{Convergence analysis}
Let us denote by 
$$F_j:=\sigma\left(\{ \omega_{i}\}, \{ l_i\}, \hspace{1mm}i=1, \dots, j-1
\right)$$ the $\sigma$-algebra generated by all the random variables $\{ \omega_{i}\}$ and the sampled indices $\{ l_i\}$ up to iteration $j-1$. 
Following the same procedure as in Section \ref{sec:sec31} we first derive a recurrence relation for the error $\| u_j- \uu \|$ at iteration $j$.
\begin{theorem} 
\label{th12}
For any deterministic or random $F_j$-adapted sequence $\{ L_j,\hspace{1mm} j \geq 1\}$, $\{ p_l^j , \hspace{1mm} j \geq 1, \hspace{1mm} l \in \{ 0, \dots, L_j \}\}$, with $\{ L_j \}$ non decreasing, denoting by $u_{j}$ the approximate control obtained at iteration $j$ using the recursive formula \eqref{randnewrec}, and $\uu$ the exact control for the continuous OCP \eqref{eqn:mother1}, we have:
\begin{equation}\label{mlmcSG}
\E[\| u_{j+1}-\uu \|^2|F_j]  \leq \overline{c_j}\| u_j-\uu\|^2+ 2\tau_j^2 {\overline{\sigma}_j^2}+\left( 2 \tau_j^2+\frac{2 \tau_j}{l} \right) {\overline{\epsilon}_j^2}
\end{equation}
with
$\overline{c_j}= 1-\frac{l}{2}\tau_j+2  Lip^2 \tau_j^2\left(4+3\sum_{l=0}^{L_j}\frac{2}{p^j_l} \right)$ \\
$\overline{\sigma}_j^2=\overline{\sigma}^2(L_j,\overrightarrow{p}^j )=\E\left[\left\|\Er [\nabla f (\uu)]-\E[\nabla f^{h_{L_j}} (\uu)|F_j]\right\|^2|F_j\right]$ the variance term\\ 
${\overline{\epsilon}_j^2}=\overline{\epsilon}^2(L_j)=\left\|\E [\nabla f^{h_{L_j}} (\uu)|F_j]-\E[\nabla f  (\uu)]\right\|^2$ the squared bias term.
\end{theorem}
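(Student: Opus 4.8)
The plan is to mirror the proof of Theorem \ref{th10}, the only structural novelty being that the full MLMC estimator is replaced by the single-sample, importance-weighted estimator \eqref{RandMLMC}. Two facts make the transcription possible. First, by Lemma \ref{randbiasbound} the randomized estimator is unbiased for the discretized gradient, i.e. $\E[\Er[\nabla f(u_j,\cdot)]\,|\,F_j]=\E[\nabla f^{h_{L_j}}(u_j,\cdot)\,|\,F_j]$, since the fresh randomness $(l_j,\omega_j)$ is independent of $F_j$ and $u_j$ is $F_j$-measurable. Second, the optimality condition gives $\E[\nabla f(\uu,\cdot)]=0$. I would therefore insert the two telescoping centring terms into $\|u_{j+1}-\uu\|^2$ to produce exactly the same three contributions as before,
\begin{align*}
A&=\E[\nabla f^{h_{L_j}}(u_j,\cdot)|F_j]-\E[\nabla f^{h_{L_j}}(\uu,\cdot)|F_j],\\
B&=\Er[\nabla f(u_j,\cdot)]-\E[\nabla f^{h_{L_j}}(u_j,\cdot)|F_j],\\
C&=\E[\nabla f^{h_{L_j}}(\uu,\cdot)|F_j]-\E[\nabla f(\uu,\cdot)],
\end{align*}
then expand the square and take $\E[\cdot|F_j]$ to reach the same ten-term identity as in \eqref{eq:9teqbis}.

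Because $A$ and $C$ are $F_j$-measurable and depend only on conditional means, not on the particular estimator used, every term other than $\tau_j^2\E[\|B\|^2|F_j]$ is handled verbatim as in Theorem \ref{th10}: the term in $A$ by the uniform Lipschitz bound of Lemma \ref{lemma:Lipconv_h}; the scalar product $-2\tau_j\langle u_j-\uu,\E[A|F_j]\rangle$ by the uniform strong convexity; the term $\E[\|C\|^2|F_j]=\overline{\epsilon}_j^2$ as the squared bias; the scalar product in $C$ via Young's inequality, yielding $\tfrac{l\tau_j}{2}\|u_j-\uu\|^2+\tfrac{2\tau_j}{l}\overline{\epsilon}_j^2$; and the three scalar products involving $B$ vanish since $\E[B|F_j]=0$ (again Lemma \ref{randbiasbound}) while $A,C$ are $F_j$-measurable.

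The only genuinely new estimate is the conditional second moment of the randomized estimator. Writing $\Delta_l(u,\omega):=\nabla f^{h_l}(u,\omega)-\nabla f^{h_{l-1}}(u,\omega)$ (with the convention $\nabla f^{h_{-1}}\equiv 0$) and conditioning on the sampled level $l_j$, the single-sample structure of $\Er$ gives
\begin{equation*}
\E[\|B\|^2|F_j]=\V[\Er[\nabla f(u_j,\cdot)]|F_j]\le \E\big[\|\Er[\nabla f(u_j,\cdot)]\|^2\,\big|\,F_j\big]=\sum_{l=0}^{L_j}\frac{1}{p_l^j}\,\E\big[\|\Delta_l(u_j,\cdot)\|^2\big],
\end{equation*}
so that the averaging factor $1/N_{j,l}$ of the full estimator is replaced here by the inverse probability $1/p_l^j$. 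To separate the part that feeds $\overline{c_j}$ from the part that feeds $\overline{\sigma}_j^2$, I would write $\Er[\nabla f(u_j,\cdot)]=\big(\Er[\nabla f(u_j,\cdot)]-\Er[\nabla f(\uu,\cdot)]\big)+\Er[\nabla f(\uu,\cdot)]$, bound the variance of the sum by twice the sum of variances, use $\E[\|\Delta_l(u_j,\cdot)-\Delta_l(\uu,\cdot)\|^2]\le 4Lip^2\|u_j-\uu\|^2$ (Lipschitz applied at levels $l$ and $l-1$), and recognize $\V[\Er[\nabla f(\uu,\cdot)]|F_j]=\overline{\sigma}_j^2$. This produces a bound of the form $\E[\|B\|^2|F_j]\lesssim Lip^2\big(\sum_{l=0}^{L_j}1/p_l^j\big)\|u_j-\uu\|^2+2\overline{\sigma}_j^2$; collecting all contributions then yields the recurrence \eqref{mlmcSG} with the stated $\overline{c_j}$.

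The main obstacle I anticipate is the constant bookkeeping inside $\overline{c_j}$. Unlike the full MLMC estimator, whose level corrections are averaged over $N_{j,l}$ independent realizations, the randomized estimator keeps a single importance-weighted sample, so its variance is amplified by the factors $1/p_l^j$; one must choose the splitting of $\Er[\nabla f(u_j,\cdot)]$ so as to extract the $\|u_j-\uu\|^2$ part without inflating the coefficient of $\overline{\sigma}_j^2$ beyond $2\tau_j^2$ and without letting the bias $\overline{\epsilon}_j^2$ leak into the variance term. Once this is done carefully, the remainder of the argument is a direct transcription of the proof of Theorem \ref{th10}.
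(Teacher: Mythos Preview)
Your proposal is correct and follows the same architecture as the paper's proof: identical $A,B,C$ decomposition, identical treatment of all terms except $\E[\|B\|^2\mid F_j]$, and the same splitting $B(u_j)=(B(u_j)-B(\uu))+B(\uu)$ to isolate $\overline{\sigma}_j^2$.

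The only minor difference is in how $B(u_j)-B(\uu)$ is estimated. The paper splits this difference into \emph{three} pieces (the level-$l_j$ term, the level-$(l_j{-}1)$ term, and the centring $\E[\nabla f^{h_{L_j}}(u_j)-\nabla f^{h_{L_j}}(\uu)\mid F_j]$), applies $(a+b+c)^2\le 3(a^2+b^2+c^2)$ and Lipschitz to each, obtaining $\E[\|B(u_j)-B(\uu)\|^2\mid F_j]\le 3Lip^2\big(1+\sum_l 2/p_l^j\big)\|u_j-\uu\|^2$, which is exactly what produces the coefficient $4+3\sum_l 2/p_l^j$ in $\overline{c_j}$. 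Your two-way split via $\|\Delta_l(u_j,\omega)-\Delta_l(\uu,\omega)\|\le 2Lip\|u_j-\uu\|$ actually yields a \emph{smaller} coefficient in front of $\|u_j-\uu\|^2$ (of the form $1+2\sum_l 2/p_l^j$), hence a stronger inequality than \eqref{mlmcSG}; the stated bound then follows a fortiori. So your anticipated ``constant bookkeeping'' obstacle is not a genuine difficulty---your route simply does not reproduce the paper's $\overline{c_j}$ with equality, but it certainly proves the claimed inequality.
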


\begin{proof}
Using the optimality condition
$$\E[\nabla f  (\uu)]=0,$$ 
and the fact that the expectation of the randomized MLMC estimator for any deterministic, or random $F_j$-measurable $u \in U$, is 
$$\E[\Er [\nabla f (u)]|F_j]=\E[\nabla f^{h_{L_j}}(u)|F_j],$$
we can decompose the error at iteration $j+1$ as
\begin{align*}
 \|u_{j+1}-\uu \|^2 & =  \left\|u_{j}-\uu-\tau_j \frac{1}{p^j_{l_j}}\left(\nabla f^{h_{l_j}}(u_j,\omega_j)-\nabla f^{h_{l_j-1}}(u_j,\omega_j)\right)\right\|^2 \\
&=  \left\|u_{j}-\uu-\tau_j \underbrace{\left( \frac{1}{p^j_{l_j}}\left(\nabla f^{h_{l_j}}(u_j,\omega_j)-\nabla f^{h_{l_j-1}}(u_j,\omega_j)\right) - \E \left[ \nabla f^{h_{L_j}}(u_j,\cdot)|F_j\right] \right)}_{=B(u_j)}\right.\\
& \left.-\tau_j \underbrace{\E \left[ \nabla f^{h_{L_j}}(u_j,\cdot)-\nabla f^{h_{L_j}}(\uu,\cdot)|F_j\right]}_{=A}-\tau_j \underbrace{\E \left[ \nabla f^{h_{L_j}}(\uu,\cdot)-\nabla f(\uu,\cdot)|F_j\right]}_{=C} \right\|^2\\
&= \| u_{j}-\uu \|^2 + \tau_j^2 \| A \|^2 + \tau_j^2 \| B(u_j) \|^2+ \tau_j^2 \| C \|^2\\
&-2\tau_j \langle u_{j}-\uu,A \rangle-2\tau_j \langle u_{j}-\uu,B(u_j) \rangle-2\tau_j \langle u_{j}-\uu,C \rangle\\
&+2\tau_j^2 \langle A,B(u_j) \rangle+2\tau_j^2 \langle A,C \rangle+2\tau_j^2 \langle B(u_j),C \rangle
\end{align*}
Taking conditional expectation w.r.t. the $\sigma$-algebra $F_j$ and using the fact that $\E[B(u_j)|F_j]=0$, which implies $\E[\langle u_{j}-\uu,B(u_j) \rangle|F_j]=\E[\langle A,B(u_j) \rangle|F_j]=\E[\langle C,B(u_j) \rangle|F_j]= 0$, we get
\begin{align*}
\E \left[ \|u_{j+1}- \uu \|^2  |F_j \right]  =&\| u_{j}-\uu \|^2 + \tau_j^2 \E[\| A \|^2|F_j] + \tau_j^2 \E[\| B(u_j) \|^2|F_j]+ \tau_j^2 \E[\| C \|^2|F_j]\\
&-2\tau_j \langle u_{j}-\uu,\E[A|F_j] \rangle-2\tau_j \langle u_{j}-\uu,\E[C|F_j] \rangle+2\tau_j^2 \E[\langle A,C \rangle|F_j]\\
 \leq & \| u_{j}-\uu \|^2 + \tau_j^2 \E[\| A \|^2|F_j] + \tau_j^2 \E[\| B(u_j) \|^2|F_j]+ \tau_j^2 \E[\| C \|^2|F_j]\\
&-\tau_j l \| u_{j}-\uu \|^2  +\frac{l}{2}\tau_j \| u_{j}-\uu \|^2 +\frac{2}{l}\tau_j \E[\| C \|^2|F_j]\\
&+\tau_j^2 \E[\| A\|^2|F_j]+\tau_j^2 \E[\|C \|^2|F_j]\\
 \leq & \left( 1-\frac{l}{2}\tau_j+2  Lip^2 \tau_j^2 \right) \| u_{j}-\uu \|^2 + \tau_j^2 \E[\| B(u_j) \|^2|F_j]+ \left(2\tau_j^2+\frac{2 \tau_j}{l}\right) \E[\| C \|^2|F_j],
\end{align*}
where we have exploited the Lipschitz continuity and strong convexity of $f^{h_{L_j}}$ to bound ${\E[\| A\|^2|F_j]}{\leq Lip^2 \| u_{j}-\uu \|^2}$ as well as $\langle u_{j}-\uu,\E[A|F_j] \rangle \geq \frac{l}{2}\| u_j-\uu \|^2$ (see also the proof of Theorem \ref{th10}). Splitting the term $B(u_j)$ as $ (B(u_j)- B(\uu))+B(\uu)$, we get:
\begin{align*}
\E \left[ \|u_{j+1}-\uu \|^2|F_j \right]  
\leq & \left( 1-\frac{l}{2}\tau_j+2  Lip^2 \tau_j^2 \right) \| u_{j}-\uu \|^2 + 2\tau_j^2 \E[\| B(u_j)-B(\uu) \|^2|F_j] \\
&+2\tau_j^2 \underbrace{\E[\| B(\uu) \|^2|F_j]}_{=\overline{\sigma}^{2}_j}+ \left(2\tau_j^2+\frac{2 \tau_j}{l}\right) \underbrace{\E[\| C \|^2|F_j]}_{=\overline{\epsilon}^{2}_j}.
\end{align*}
We can further split the term $B(u_j)-B(\uu)$ in 3 parts, and use the Lipschitz continuity:
\begin{align*}
\| B(u_j)- &B(\uu) \|^2  \leq  3\left\| \frac{1}{p^j_{l_j}}\left(\nabla f^{h_{l_j}}(u_j,\cdot)-\nabla f^{h_{l_j}}(\uu,\cdot)\right) \right\|^2\\
&+3\left\| \frac{1}{p^j_{l_j}}\left(\nabla f^{h_{l_j-1}}(u_j,\cdot)-\nabla f^{h_{l_j-1}}(\uu,\cdot)\right) \right\|^2+3\left\| \E[\nabla f^{h_{L_j}}(u_j,\cdot)-\nabla f^{h_{L_j}}(\uu,\cdot)|F_j] \right\|^2, 
\end{align*}
so that its conditional expectation reads
\begin{align*}
\E[\| B(u_j)- &B(\uu) \|^2|F_j]  \leq  3Lip^2 \left( 1+\sum_{l=0}^{L_j}\frac{2}{p^j_l} \right) \| u_{j}-\uu \|^2.
\end{align*}
Finally, we obtain:
\begin{equation}\label{bigrecRand}
\E \left[ \|u_{j+1}-\uu \|^2|F_j \right]  \leq \left( 1-\frac{l}{2}\tau_j+2  Lip^2 \tau_j^2\left(4+3\sum_{l=0}^{L_j}\frac{2}{p^j_l} \right) \right) \| u_{j}-\uu \|^2 + 2\tau_j^2 \overline{\sigma}^{2}_j+ \left(2\tau_j^2+\frac{2 \tau_j}{l}\right) \overline{\epsilon}^{2}_j.
\end{equation}
\end{proof}


From now on, we consider only deterministic sequences $\{ L_j \}, \{  p_l^j\}$, i.e. chosen in advance and not adaptively during the algorithm. In this case the quantities $\overline{\sigma}_j$ and $\overline{\epsilon}_j$ defined in Theorem \ref{th12} are deterministic as well. From Theorem \ref{th12}, taking the full expectation $\E[\cdot]$ in \eqref{bigrecRand} leads to the recurrence
\begin{equation} \label{recRand1}
a_{j+1}  \leq  \overline{c_j} a_j + \aaab \tau_j^2 \overline{\sigma}^{2}_j+ \bbbb \tau_j \overline{\epsilon}^{2}_j
\end{equation}
where $a_j$ denotes the MSE $a_j=\E[\| u_j-\uu\|^2]$, $\overline{c_j}$, $\overline{\sigma}_j^2$, $\overline{\epsilon}_j^2$ are defined in Theorem \ref{th12} and $\aaab=2$, $\bbbb=2\tau_0+\frac{2}{l}$ as in \eqref{RecForm}. Moreover, we restrict the analysis to the case $2r+2 > \gamma d$ so that $\sigma_j = o(1)$.
We now derive bounds on the bias term $\overline{\epsilon}^2(L)$ and the variance term $\overline{\sigma}^2(L,\{p_l\}_{l=0}^{L} )$  as a function of the total number of levels $L$ and  the 
probability mass function (pmf) $\{p_l\}_{l=0}^{L}$ on $\{ 0, \dots, L \}$. 
\begin{lemma} \label{biasbound}
For a sufficiently smooth optimal control $\uu$ and primal and dual solutions $y(\uu)$, $p(\uu)$, the bias term $\overline{\epsilon}^2(L)$ associated to the randomized MLMC estimator \eqref{RandMLMC} with $L$ levels satisfies 
$$\overline{\epsilon}^2(L)\leq C(\uu)h_{L}^{2r+2}.$$
with $C(\uu)$ as in Lemma \ref{lemma:bias}.
\end{lemma}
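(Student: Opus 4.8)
The plan is to observe that the squared bias of the randomized estimator coincides exactly with that of the full MLMC estimator, so that the bound follows by repeating the short argument already used in the proof of Lemma \ref{lemma:biaslemma}. Since we restrict to deterministic sequences, the conditioning on $F_j$ is irrelevant and, by the definition of $\overline{\epsilon}^2(L)$ in Theorem \ref{th12}, we have $\overline{\epsilon}^2(L) = \|\E[\nabla f^{h_L}(\uu,\cdot)] - \E[\nabla f(\uu,\cdot)]\|^2$, which is the object to be bounded.

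First I would invoke Lemma \ref{randbiasbound}, which establishes $\E[\Er[\nabla f(\uu,\cdot)]] = \E[\nabla f^{h_L}(\uu,\cdot)]$. This shows that the mean of the randomized estimator is precisely the expected gradient on the finest level $h_L$, independently of the probability mass function $\{p_l\}_{l=0}^L$; consequently the randomization introduces no additional bias and the squared bias term is identical to the one analyzed for the full MLMC estimator in Lemma \ref{lemma:biaslemma}. The only point worth emphasizing is that the choice of pmf affects the variance but not the bias, which is exactly the content of Lemma \ref{randbiasbound}.

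Next I would apply Jensen's inequality to pass the norm inside the expectation, namely $\|\E[\nabla f^{h_L}(\uu,\cdot) - \nabla f(\uu,\cdot)]\|^2 \leq \E[\|\nabla f^{h_L}(\uu,\cdot) - \nabla f(\uu,\cdot)\|^2]$, and then bound the right-hand side directly by Lemma \ref{lemma:bias}, yielding $C(\uu)\,h_L^{2r+2}$ under the stated smoothness assumptions $y(\uu), p(\uu) \in L^2_{\mathbb{P}}(\Gamma, H^{r+1}(D))$, with the same constant $C(\uu)$ as in Lemma \ref{lemma:bias}.

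I do not anticipate any genuine obstacle here: once the bias identity from Lemma \ref{randbiasbound} is noted, the two remaining steps (Jensen followed by the finite-element bias estimate) are verbatim identical to the proof of Lemma \ref{lemma:biaslemma}, and no new estimate is required.
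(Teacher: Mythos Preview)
Your proposal is correct and follows exactly the paper's approach: the paper's proof simply cites Lemma \ref{randbiasbound} to identify the bias with that of the full MLMC estimator and then states that the argument follows verbatim that of Lemma \ref{lemma:biaslemma}, which is precisely the Jensen-plus-Lemma \ref{lemma:bias} chain you spell out.
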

\begin{proof}
Since, from Lemma \ref{randbiasbound}, we have $\E \left[ \Erg[\nabla f(u,\cdot)] \right] = \E \left[ \nabla f^{h_{L}}(u,\cdot)\right]$, the proof follows verbatim that of Lemma \ref{lemma:biaslemma}.
\end{proof}

\begin{lemma} \label{biasvarbound}
For a sufficiently smooth optimal control $\uu$ and primal and dual solutions $y(\uu)$, $p(\uu)$, the variance term $\overline{\sigma}^{2}(L,\{ p_l \}_{l=0}^L )$ associated to the randomized MLMC estimator \eqref{RandMLMC}
using $L$ levels and 
 the pmf $\{ p_l \}_{l=0}^{L}$, satisfies
$$\overline{\sigma}^{2}(L,\{ p_l \}_{l=0}^L )\leq \sum_{l=0}^{L}2\widetilde{C}(\uu)\frac{h_{l}^{2r+2}}{p_l}$$
with $\widetilde{C}(\uu)$ as in Lemma \ref{lemma:varlemma}. If the pmf  $\{ p_l \}_{l=0}^{L}$ is chosen optimally as $p_l \propto 2^{-l\frac{2r+2+\gamma d}{2}}$, and $2r+2>\gamma d$ then 
$$\overline{\sigma}^{2}(L,\{ p_l \}_{l=0}^L )=O(1), \quad \mathrm{w.r.t. \hspace{1mm}}L.$$
\end{lemma}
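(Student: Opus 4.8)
The plan is to evaluate the variance term $\overline{\sigma}^2(L,\{p_l\}_{l=0}^L)$ directly from its definition by conditioning on the randomly drawn level. Setting $D_l(\omega) := \nabla f^{h_l}(\uu,\omega) - \nabla f^{h_{l-1}}(\uu,\omega)$ (with $D_0 = \nabla f^{h_0}(\uu,\cdot)$ since $\nabla f^{h_{-1}}=0$), the estimator \eqref{RandMLMC} evaluated at $\uu$ is $\tfrac{1}{p_l}D_l(\omega)$, where $l\sim\{p_l\}$ and $\omega\sim\mathbb{P}$ are independent. By Lemma \ref{randbiasbound} its mean is $\E[\nabla f^{h_L}(\uu)]$, so the decomposition $\V[X]=\E[\|X\|^2]-\|\E[X]\|^2$ yields $\overline{\sigma}^2 = \E[\|\tfrac{1}{p_l}D_l\|^2] - \|\E[\nabla f^{h_L}(\uu)]\|^2 \leq \E[\|\tfrac{1}{p_l}D_l\|^2]$. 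First I would compute this second moment by conditioning on the level: since $l$ is independent of $\omega$ and drawn with probability $p_l$,
\[
\E\Big[\big\|\tfrac{1}{p_l}D_l\big\|^2\Big] = \sum_{l=0}^L p_l\cdot\tfrac{1}{p_l^2}\,\E[\|D_l\|^2] = \sum_{l=0}^L \tfrac{1}{p_l}\,\E[\|D_l\|^2].
\]

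The second step is to bound the per-level second moments $\E[\|D_l\|^2]$, reusing the argument from the proof of Lemma \ref{lemma:varlemma}. For $l\geq 1$ I would insert $\pm\nabla f(\uu,\cdot)$ and apply $\|a+b\|^2\leq 2\|a\|^2+2\|b\|^2$ together with the finite-element bound of Lemma \ref{lemma:bias}, giving $\E[\|D_l\|^2]\leq 2C(\uu)(h_l^{2r+2}+h_{l-1}^{2r+2}) = 2C(\uu)(1+2^{2r+2})h_l^{2r+2}$, while for $l=0$ one has directly $\E[\|D_0\|^2]=\E[\|\nabla f^{h_0}(\uu)\|^2]$. By the definition of $\widetilde{C}(\uu)$ in Lemma \ref{lemma:varlemma}, both are $\leq 2\widetilde{C}(\uu)h_l^{2r+2}$; substituting into the sum above gives the first claim $\overline{\sigma}^2 \leq \sum_{l=0}^L 2\widetilde{C}(\uu)h_l^{2r+2}/p_l$.

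For the final claim I would insert the optimal pmf $p_l = 2^{-l(2r+2+\gamma d)/2}/S_L$ with $S_L = \sum_{k=0}^L 2^{-k(2r+2+\gamma d)/2}$, and use $h_l=h_0 2^{-l}$. A short exponent computation collapses the summand to $h_l^{2r+2}/p_l = h_0^{2r+2}S_L\,2^{-l(2r+2-\gamma d)/2}$, so that $\overline{\sigma}^2 \leq 2\widetilde{C}(\uu)h_0^{2r+2}\,S_L\sum_{l=0}^L 2^{-l(2r+2-\gamma d)/2}$. When $2r+2>\gamma d$, both $S_L$ and the remaining sum are partial sums of convergent geometric series (ratios strictly less than one), hence bounded uniformly in $L$; this yields $\overline{\sigma}^2=O(1)$ with respect to $L$.

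The computation is essentially routine; the only point requiring care is the variance evaluation for the single-sample randomized estimator, where one must keep track of the two independent sources of randomness (the level $l$ and the sample $\omega$) and verify that the \emph{non-centered} per-level moments $\E[\|D_l\|^2]$ obey the same $h_l^{2r+2}$ bound as the centered variances $V_l$ of Lemma \ref{lemma:varlemma}. I do not foresee any genuine obstacle beyond this bookkeeping and the elementary geometric-series estimate.
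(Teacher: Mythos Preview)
Your proposal is correct and follows essentially the same route as the paper: both drop the negative term in $\V[X]=\E[\|X\|^2]-\|\E[X]\|^2$, condition on the randomly drawn level to obtain $\sum_l \E[\|D_l\|^2]/p_l$, invoke the per-level bound $\E[\|D_l\|^2]\le 2\widetilde{C}(\uu)h_l^{2r+2}$ from Lemma~\ref{lemma:varlemma}, and then insert the optimal pmf to reduce the sum to a convergent geometric series under $2r+2>\gamma d$. Your explicit remark that the \emph{non-centered} moments $\E[\|D_l\|^2]$ obey the same bound as the centered $V_l$ is exactly the small bookkeeping step the paper glosses over when it says ``can be bounded as in Lemma~\ref{lemma:varlemma}''.
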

\begin{proof}
For the first part of the Lemma, we start by showing that
\begin{equation*}
\overline{\sigma}^{2}(L,\{ p_l \}_{l=0}^L ) \leq \sum_{k=0}^{L} \frac{V_k}{p_k}
\end{equation*}
with $V_k=\E\left[\left\|\nabla f^{h_k}(\uu,\cdot)-\nabla f^{h_{k-1}}(\uu,\cdot) \right\|^2\right]$. We can write, following \cite{Giles}
\begin{align*}
\overline{\sigma}^{2}(L,\{ p_l \}_{l=0}^L ) = & \E \left[ \left\|\Erg[\nabla f(\uu,\cdot)]-\E \left[ \nabla f^{h_{L}}(\uu,\cdot)\right]\right\|^2 \right]\\
= & \E  \left[ \left\| \frac{1}{p_{l_R}}\left(\nabla f^{h_{l_R}}(\uu,\cdot)-\nabla f^{h_{l_R-1}}(\uu,\cdot)\right) \right\|^2 \right]-\left\| \E \left[ \nabla f^{h_{L}}(\uu,\cdot)\right] \right\|^2\\
\leq & \E  \left[ \left\| \frac{1}{p_{l_R}}\left(\nabla f^{h_{l_R}}(\uu,\cdot)-\nabla f^{h_{l_R-1}}(\uu,\cdot)\right) \right\|^2 \right]\\
\leq & \sum_{k=0}^{L} \frac{V_k}{p_k}.
\end{align*}
The variance terms $V_k$ can be bounded as in Lemma \ref{lemma:varlemma} as 
$$V_k \leq 2 \widetilde{C}(\uu) h_k^{2r+2}, \quad k=0, \dots, L$$
leading to the final result
$$\overline{\sigma}^{2}(L,\{ p_l \}_{l=0}^L ) \leq \sum_{k=0}^{L} \frac{2}{p_k} \widetilde{C}(\uu) h_k^{2r+2}.$$
Replacing the probability mass function $p_l \propto 2^{-l\frac{2r+2+\gamma d}{2}}$, as defined in \eqref{probab}, we get:
\begin{align*}
\overline{\sigma}^{2}(L,\{ p_l \}_{l=0}^L )\leq \sum_{l=0}^{L}2\widetilde{C}(\uu)\frac{h_{l}^{2r+2}}{p_l}&=\sum_{l=0}^{L}2\widetilde{C}(\uu)h_0^{2r+2}2^{-l(2r+2)}2^{l(\frac{2r+2+\gamma d}{2})}\frac{2^{-(L+1)\frac{2r+2+\gamma d}{2}}-1}{2^{-\frac{2r+2+\gamma d}{2}}-1}\\
& \leq 2\widetilde{C}(\uu)h_0^{2r+2} \sum_{l=0}^{L}2^{-l\frac{2r+2-\gamma d}{2}}\left({1-2^{-\frac{2r+2+\gamma d}{2}}}\right)^{-1}\\
&=O(1).
\end{align*}
Since $2r+2- \gamma d>0$ and the series is convergent. 
\end{proof}
Analogously to the non-randomized MLSG algorithm, we enforce an algebraic decrease of the bias as a function of $j$, i.e. 
\begin{equation}
C(\uu)h_0^{2r+2} 2^{-{L_j}{(2r+2)}} \sim \overline{\epsilon}^{2}_0 j^{1-\eta}, \quad \mathrm{for \hspace{1mm} some\hspace{1mm}}\eta>1.
\end{equation}
Under the further condition $\eta<\frac{3(2r+2)+\gamma d}{2r+2+\gamma d}$, we can obtain a bound on the MSE $a_j=\E[\| u_j-\uu \|^2]$ for the RMLSG algorithm, analogous to the one stated in Lemma \ref{lemma:assumptionconvergence} for the non-randomized version.
\begin{lemma} \label{lemma:assumptionconvergenceRand}
Assuming $2r+2>\gamma d$ and choosing $L_j$ such that the bias term decays as 
\begin{equation} \label{convepsilonRand}
C(\uu)h_{L_j}^{2r+2} \sim \overline{\epsilon}_0^2 j^{-\eta+1}, \quad \eta \in \left] 1, \frac{3(2r+2)+\gamma d}{2r+2+\gamma d} \right[,
\end{equation}
with $\overline{\epsilon}_0^2>0$ constant, taking $\tau_j=\tau_0/j$ with $\tau_0 l>2$, and the probability mass function  $\{p_l^{j}\}$ as in \eqref{probab}, then we can bound the MSE $a_j=\E[\| u_j-\uu \|^2]$ for the RMLSG algorithm \eqref{randnewrec} after $j$ iterations as
\begin{equation} \label{rec2tRand}
 a_{j}\leq C_1(a_1) j^{-\frac{\tau_0 l}{2}}+C_2 j^{1-\min\{ 2,\eta \}}
\end{equation}
with $C_1(a_1)$ and $C_2$ independent of $j$.
\end{lemma}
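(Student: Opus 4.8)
The plan is to mirror closely the argument of Lemma \ref{lemma:assumptionconvergence}, since the randomized recurrence \eqref{recRand1} has exactly the same structure as \eqref{RecForm}: a contraction coefficient $\overline{c}_j$ of the form $1-\frac{l}{2}\tau_j+O(\tau_j^2(\cdots))$ plus a variance and a bias forcing term. First I would insert the explicit decay rates of the two forcing terms. By Lemma \ref{biasvarbound}, with the optimal pmf \eqref{probab} and $2r+2>\gamma d$, the variance is bounded by a constant, $\overline{\sigma}_j^2\leq \overline{\sigma}_0^2=O(1)$; by Lemma \ref{biasbound} and the choice of $L_j$ in \eqref{convepsilonRand}, the squared bias satisfies $\overline{\epsilon}_j^2\leq C(\uu)h_{L_j}^{2r+2}\sim \overline{\epsilon}_0^2\, j^{1-\eta}$. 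Then I would unroll \eqref{recRand1} exactly as in \eqref{eq:recbound}, obtaining $a_{j+1}\leq \kappa_{0,j}a_1+\mathcal{B}_j$ with $\kappa_{i,j}=\prod_{k=i+1}^j\overline{c}_k$ and $\mathcal{B}_j=\sum_{i=1}^j(\aaab\tau_i^2\overline{\sigma}_i^2+\bbbb\tau_i\overline{\epsilon}_i^2)\kappa_{i,j}$.

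The hard part, and the only genuine difference from Lemma \ref{lemma:assumptionconvergence}, is controlling the products $\kappa_{i,j}$. Taking logarithms and using $\log(1+x)\leq x$ gives $\log\kappa_{i,j}\leq -\frac{\tau_0 l}{2}\sum_{k=i+1}^j\frac{1}{k}+2Lip^2\tau_0^2\sum_{k=i+1}^j\frac{1}{k^2}\big(4+6\sum_{l=0}^{L_k}\tfrac{1}{p_l^k}\big)$, so everything hinges on showing that $\sum_{k\geq1}\tau_k^2\sum_{l=0}^{L_k}\frac{1}{p_l^k}<\infty$. Here I would compute, for the pmf \eqref{probab}, $\sum_{l=0}^{L_k}\frac{1}{p_l^k}=\big(\sum_{m=0}^{L_k}2^{-m\frac{2r+2+\gamma d}{2}}\big)\big(\sum_{l=0}^{L_k}2^{l\frac{2r+2+\gamma d}{2}}\big)$; the first factor is a convergent geometric series, hence bounded, while the second is dominated by its last term, giving $\sum_{l=0}^{L_k}\frac{1}{p_l^k}\lesssim 2^{L_k\frac{2r+2+\gamma d}{2}}$. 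Using $2^{-L_k(2r+2)}\sim k^{1-\eta}$ from \eqref{convepsilonRand}, this becomes $\lesssim k^{\frac{(\eta-1)(2r+2+\gamma d)}{2(2r+2)}}$, so $\tau_k^2\sum_l\frac{1}{p_l^k}\lesssim k^{\frac{(\eta-1)(2r+2+\gamma d)}{2(2r+2)}-2}$. The series converges precisely when this exponent is $<-1$, which rearranges to $\eta<\frac{3(2r+2)+\gamma d}{2r+2+\gamma d}$: this is exactly where the upper restriction on $\eta$ in \eqref{convepsilonRand} is used. With the sum bounded by a finite constant $M$, I obtain $\kappa_{i,j}\leq e^{M}\,(j/i)^{-\tau_0 l/2}$ and, in particular, $\kappa_{0,j}\leq C_1 j^{-\tau_0 l/2}$.

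Finally I would estimate the two pieces of $\mathcal{B}_j$ separately using this bound on $\kappa_{i,j}$. For the variance piece, since $\overline{\sigma}_i^2=O(1)$ and $\tau_i=\tau_0/i$, $\sum_{i=1}^j\aaab\tau_i^2\overline{\sigma}_i^2\kappa_{i,j}\lesssim j^{-\tau_0 l/2}\sum_{i=1}^j i^{\frac{\tau_0 l}{2}-2}\lesssim j^{-1}=j^{1-2}$, where $\tau_0 l>2$ guarantees the summand exponent exceeds $-1$. For the bias piece, $\sum_{i=1}^j\bbbb\tau_i\overline{\epsilon}_i^2\kappa_{i,j}\lesssim j^{-\tau_0 l/2}\sum_{i=1}^j i^{\frac{\tau_0 l}{2}-\eta}\lesssim j^{1-\eta}$ whenever $\eta<\frac{\tau_0 l}{2}+1$, and is otherwise dominated by $j^{-\tau_0 l/2}$, hence harmless. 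Taking the larger of the two exponents gives $\mathcal{B}_j\lesssim j^{1-\min\{2,\eta\}}$, and combining with $\kappa_{0,j}a_1\lesssim j^{-\tau_0 l/2}$ yields the claimed bound $a_j\leq C_1(a_1)j^{-\tau_0 l/2}+C_2 j^{1-\min\{2,\eta\}}$. I expect the delicate bookkeeping of the exponent in the $\kappa_{i,j}$ estimate, and the verification that the restriction on $\eta$ is exactly the summability threshold, to be the principal obstacle; the remaining geometric-sum manipulations are routine.
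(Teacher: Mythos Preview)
Your proposal is correct and follows essentially the same route as the paper's proof: unroll the recurrence \eqref{recRand1}, bound the products $\kappa_{i,j}$ by showing $\sum_{k\ge 1}\tau_k^2\sum_{l=0}^{L_k}(p_l^k)^{-1}<\infty$ via the estimate $\sum_{l=0}^{L_k}(p_l^k)^{-1}\lesssim k^{\theta}$ with $\theta=\frac{(\eta-1)(2r+2+\gamma d)}{2(2r+2)}$, identify the condition $\theta<1$ with the upper bound on $\eta$ in \eqref{convepsilonRand}, and then split $\mathcal{B}_j$ into its variance ($O(1)$ from Lemma \ref{biasvarbound}) and bias ($\sim j^{1-\eta}$) contributions. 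You are in fact slightly more careful than the paper in noting the case $\eta\ge\frac{\tau_0 l}{2}+1$ for the bias sum, which the paper absorbs silently into the $\lesssim$.
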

\begin{proof}
We notice that we can bound $\sum_{l=0}^{L_j}\left(p_l^{j}\right)^{-1}$ appearing in the constant $\overline{c_j}$ of Theorem \ref{th12}, by
\begin{equation}\label{sumpm1}
\sum_{l=0}^{L_j}\left(p_l^{j}\right)^{-1} \leq \widetilde{c} j^{\theta},
\end{equation}
for some constant $\widetilde{c}>0$ and $\theta={\frac{\eta-1}{2}{\left(1+\frac{\gamma d}{2r+2}\right)}}$. Indeed, denoting by $P_j=\sum_{l=0}^{L_j}2^{-l \frac{2r+2+\gamma d}{2}}=O(1)$, we have
\begin{align*}
\sum_{l=0}^{L_j}\left(p_l^{j}\right)^{-1}&=\sum_{l=0}^{L_j}2^{l \frac{2r+2+\gamma d}{2}}P_j\\
&\lesssim 2^{(L_j+1) \frac{2r+2+\gamma d}{2}}\\
&\lesssim j^{\frac{\eta-1}{2r+2}\frac{2r+2+\gamma d}{2}}\\
&\lesssim j^{\theta}.
\end{align*}
The condition $\eta<\frac{3(2r+2)+\gamma d}{2r+2+\gamma d}$ is equivalent to $\theta<1,$ which makes the series $\sum_{l=0}^{L_j} \frac{j^{-2}}{p_l^j}$ summable. From \eqref{recRand1} we obtain by induction
\begin{align*}
a_{j+1} \leq & \overline{c_j} a_j + \aaab \tau_j^2 \overline{\sigma}_j^2+\bbbb \tau_j \overline{\epsilon}_j^2 \\
\leq & \overline{c_j} \hspace{1mm}\overline{c_{j-1}} a_{j-1} + \overline{c_j} (\aaab \tau_{j-1}^2 \overline{\sigma}_{j-1}^2+\bbbb\tau_{j-1} \overline{\epsilon}_{j-1}^2)+ \aaab\tau_j^2 \overline{\sigma}_j^2+\bbbb\tau_j \overline{\epsilon}_j^2 \\
\lesssim & \cdots\\
\lesssim & \underbrace{\Big( \prod_{i=1}^j \overline{c_i} \Big)}_{=\kappa_{0,j}} a_1 + \underbrace{\sum_{i=1}^j (\aaab\tau_i^2 \overline{\sigma}_i^2+\bbbb\tau_i \overline{\epsilon}_i^2) \prod_{l=i+1}^j \overline{c_l}}_{=\mathcal{B}_j}.\numberthis \label{recboundRand}
\end{align*}
with $\aaab=2$ and $\bbbb=2\tau_0+\frac{2}{l}$. For the first term $\kappa_{0,j}$, computing its logarithm, we have,

\begin{equation*}
\log(\kappa_{0,j}) \leq\sum_{i=1}^j \log\left(1-\frac{ \tau_0 l}{2i}+ \tau_0^2 Lip^2\left(8+12\widetilde{c}i^{\theta}\right)i^{-2}\right)\leq \sum_{i=1}^j \frac{- \tau_0 l}{2i} +\widehat{c}\sum_{i=1}^{j} i^{\theta-2},
\end{equation*}
with $\widehat{c}=Lip^2 \left(8+12\widetilde{c}\right)\tau_0^2$. Hence,
\begin{equation*}
\log(\kappa_{0,j}) \leq -\frac{\tau_0 l}{2} \log(j+1) + M',\mathrm{\hspace{2mm}with\hspace{2mm}} M'=\widehat{c}\sum_{i=1}^{\infty} i^{\theta-2}< + \infty,
\end{equation*}
which implies $\kappa_{0,j} \lesssim j^{- \frac{\tau_0 l}{2}}$.
For the second term $\mathcal{B}_j$ in \eqref{recboundRand} we have:
\[
\mathcal{B}_j \lesssim \sum_{i=1}^j \left(\aaab \tau_0^2 i^{-2}+\bbbb \tau_0 \overline{\epsilon}_0^2 i^{-\eta}\right) \underbrace{\prod_{k=i+1}^j \overline{c_k}}_{=\kappa_{i,j}}.\]
For the term $\kappa_{i,j}$ we can proceed as follows:
\begin{align*}
\log(\kappa_{i,j}) & =\sum_{k=i+1}^j \log\left(\overline{c_k}\right) \\
& \leq \sum_{k=i+1}^j \log\left(1-\frac{\tau_0 l}{2k}+ \widehat{c}\frac{k^{\theta}}{k^2}\right) \\
&\leq \sum_{k=i+1}^j \Big(-\frac{\tau_0 l}{2k}+ \widehat{c}\frac{k^{\theta}}{k^2}\Big) \\
&\leq - \frac{\tau_0 l}{2} \left(\log(j+1)-\log(i+1)\right)+M',
\end{align*}
which implies
\begin{align*}
\kappa_{i,j} & \leq (j+1)^{-\frac{\tau_0 l}{2}} (i+1)^{\frac{\tau_0 l}{2}} \exp\left(M'\right),
\end{align*}
and the final bound on $B_j$:
\begin{align*}
\mathcal{B}_j & \lesssim  (j+1)^{- \frac{\tau_0 l}{2}} {\exp\left(M'\right)} \sum_{i=1}^{j} \left(\aaa\tau_0^2 i^{\frac{\tau_0 l}{2}-2}+\bbb\tau_0 \overline{\epsilon}_0^2 i^{\frac{\tau_0 l}{2}-\eta}\right)2^{\tau_0 l/2}  \\
& \lesssim j^{1-\min\{ 2,\eta \}}.
\end{align*}
\end{proof}
We are now ready to state the final complexity result for the RMLSG algorithm.
\begin{theorem}\label{corr:complexity2Rand}
In the case $2r+2> \gamma d$ with the same notation and assumptions as in Lemma \ref{lemma:assumptionconvergenceRand}, if the parameter $\eta$ satisfies 
$\eta \in \left] 1,  \frac{3(2r+2)+\gamma d}{2r+2+\gamma d}  \right[$, and 
$\tau_0>\frac{2 }{l}$, then the expected computational work $W(tol)$ of the RMLSG algorithm \eqref{randnewrec} to reach a MSE $O(tol^2)$, is bounded by:
\begin{equation}
\E[W(tol)]  \lesssim  tol^{\min\left\{ -2, \frac{-2}{\eta-1} \right\}}.
\end{equation}
In particular, if we choose $\eta \in \left[ 2,  \frac{3(2r+2)+\gamma d}{2r+2+\gamma d}  \right[$, we reach the optimal complexity  of $\E[W(tol)] \lesssim tol^{-2}$.
\end{theorem}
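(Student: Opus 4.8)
The plan is to combine the MSE bound from Lemma \ref{lemma:assumptionconvergenceRand} with a uniform estimate of the expected cost per iteration, and then simply multiply the two. The overall structure mirrors a standard stochastic-approximation complexity argument: first fix the number of iterations $j$ needed to reach the target accuracy, then bound the total work accumulated over those iterations.

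First I would identify the dominant term in the MSE bound $a_j \leq C_1(a_1)\, j^{-\tau_0 l/2} + C_2\, j^{1-\min\{2,\eta\}}$. Since the hypothesis gives $\tau_0 l > 2$, we have $\tau_0 l/2 > 1 \geq \min\{2,\eta\}-1$, so the second term always decays more slowly and hence dominates, giving $a_j \lesssim j^{-(\min\{2,\eta\}-1)}$. Inverting this bound, to guarantee $MSE = a_j = O(tol^2)$ it suffices to run a number of iterations $j \sim tol^{-2/(\min\{2,\eta\}-1)}$. (The condition $\eta < (3(2r+2)+\gamma d)/(2r+2+\gamma d)$ is inherited from Lemma \ref{lemma:assumptionconvergenceRand}, which is what makes that MSE bound valid in the first place.)

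Next I would estimate the expected computational work of a single iteration. At iteration $i$ the randomized estimator \eqref{RandMLMC} evaluates only one correction $\nabla f^{h_{l_i}} - \nabla f^{h_{l_i-1}}$ on a single sample at the randomly drawn level $l_i$, whose cost satisfies $C_{l_i} \lesssim 2^{l_i \gamma d}$; hence the expected per-iteration cost is $\E[\widehat{W}_i] = \sum_{l=0}^{L_i} p_l^i C_l$. Substituting the optimal pmf \eqref{probab}, $p_l^i \propto 2^{-l(2r+2+\gamma d)/2}$, the summand becomes proportional to $2^{l(\gamma d - (2r+2))/2}$. The crucial point is that the assumption $2r+2 > \gamma d$ makes this exponent strictly negative, so the series converges to a constant independent of $L_i$; likewise the normalizing constant $P_i = \sum_k 2^{-k(2r+2+\gamma d)/2}$ is bounded above and below by positive constants. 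Therefore $\E[\widehat{W}_i] = O(1)$ uniformly in $i$. This uniform-in-$j$ boundedness of the per-iteration cost is the feature that distinguishes the randomized scheme from the full MLSG method, where the per-iteration cost grows with $j$, and I expect verifying it cleanly to be the heart of the argument; the only real obstacle is the bookkeeping needed to confirm that both geometric series are uniformly controlled.

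Finally I would assemble the pieces. By linearity of expectation, $\E[W(tol)] = \sum_{i=1}^{j} \E[\widehat{W}_i] \lesssim j \sim tol^{-2/(\min\{2,\eta\}-1)}$. It then remains only to simplify the exponent by cases: for $\eta \geq 2$ we have $\min\{2,\eta\}-1 = 1$, giving $\E[W(tol)] \lesssim tol^{-2}$, whereas for $\eta < 2$ we have $\min\{2,\eta\}-1 = \eta-1$, giving $\E[W(tol)] \lesssim tol^{-2/(\eta-1)}$ with $2/(\eta-1) > 2$. The two cases combine into the stated bound $\E[W(tol)] \lesssim tol^{\min\{-2,\, -2/(\eta-1)\}}$, and the optimal rate $tol^{-2}$ is recovered precisely when $\eta \geq 2$, i.e. for $\eta \in [2, (3(2r+2)+\gamma d)/(2r+2+\gamma d)[$, which completes the proof.
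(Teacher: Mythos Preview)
Your proposal is correct and follows essentially the same approach as the paper's proof: bound the expected per-iteration cost by $O(1)$ via the convergent geometric series in $2^{-l(2r+2-\gamma d)/2}$, sum over iterations to get $\E[W_j]\lesssim j$, and invert the MSE bound from Lemma~\ref{lemma:assumptionconvergenceRand} to determine $j\sim tol^{-2/(\min\{2,\eta\}-1)}$. Your explicit justification that the $j^{-\tau_0 l/2}$ term is dominated (since $\tau_0 l/2>1\ge \min\{2,\eta\}-1$) is a detail the paper leaves implicit, but otherwise the arguments coincide.
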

\begin{proof}
The expected computational work of RMLSG up to iteration $j$, namely $W_j$, can be bounded by
\begin{align*}
\E[W_j]=\sum_{i=1}^{j}\E[C_{l_i}]&=\sum_{i=1}^{j}\sum_{k=0}^{L_i}C_k p_k^{i}\\
& \lesssim \sum_{i=1}^{j}\sum_{k=0}^{L_i} 2^{k \gamma d} 2^{-k \frac{2r+2+\gamma d}{2}}\left(\sum_{p=0}^{L_j}2^{-p \frac{2r+2+\gamma d}{2}}\right)^{-1}\\
& = \sum_{i=1}^{j}\sum_{k=0}^{L_i} 2^{-k \frac{2r+2-\gamma d}{2}}\left(\sum_{p=0}^{L_j}2^{-p \frac{2r+2+\gamma d}{2}}\right)^{-1}\\
& \lesssim O(j).\\
\end{align*}
Taking now $j \sim tol^{\frac{2}{1-\min\{ 2, \eta \}}}$ to achieve $a_j \lesssim tol^2$ we finally obtain $\E[W(tol)] \lesssim tol^{\frac{2}{1-\min\{ 2, \eta \}}}$ which can be equivalently rewritten as
$$\E[W(tol)] \lesssim tol^{\min\left\{ -2, \frac{-2}{\eta-1} \right\}}.$$
\end{proof}
The previous Theorem shows that the RMLSG algorithm achieves the optimal complexity $\E[W(tol)] \lesssim tol^{-2}$ (in terms of expected computational cost versus MSE), for all $\eta \in \left[ 2,  \frac{3(2r+2)+\gamma d}{2r+2+\gamma d}  \right[$. It is worth looking also at the variance of the computational work, beside its expected value. The next Lemma shows that the choice $\eta=2$ is optimal in the sense that it minimizes the variance of the cost among all $\eta \in \left[ 2,  \frac{3(2r+2)+\gamma d}{2r+2+\gamma d}  \right[$, at least in the case $2r+2<3 \gamma d$, and leads to a coefficient of variation $\varrho(tol)=\frac{\sqrt{\V[W(tol)]}}{\E[W(tol)]}$ that goes asymptotically to zero.

\begin{theorem} \label{thVar}
Let $W(tol)$ be the computational cost to reach a MSE $=O(tol^2)$ by the RMLSG algorithm \eqref{randnewrec}, and denote by $\varrho(tol)$ the coefficient of variation of $W(tol)$, namely 
$$\varrho(tol)=\frac{\sqrt{\V[W(tol)]}}{\E[W(tol)]}.$$
Assuming that the computational cost $C_l$ of computing one realization of $\nabla f^{h_l}(u, \cdot)-\nabla f^{h_{l-1}}(u, \cdot)$ can be bounded as $\underline{c_c}2^{l \gamma d} \leq C_l \leq \overline{c_c}2^{l \gamma d}$ for some $\underline{c_c}, \overline{c_c}>0$,
then if $2r+2 \geq 3 \gamma d$,
$$\lim_{tol \rightarrow 0}\varrho(tol)=0, \quad \forall \eta \in \left[ 2, \frac{3(2r+2)+\gamma d}{2r+2+\gamma d} \right[.$$
On the other hand, if $\gamma d < 2r+2 < 3 \gamma d$,
$$\lim_{tol \rightarrow 0}\varrho(tol)=0, \quad \forall \eta \in \left[ 2, \frac{3 \gamma d+(2r+2)}{3 \gamma d-(2r+2)} \right[.$$
Moreover, $\varrho(tol)$ is minimized for $\eta=2$, for which
$$\varrho(tol) \lesssim tol^{\frac{3(2r+2-\gamma d)}{2(2r+2)}}.$$
\end{theorem}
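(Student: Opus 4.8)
The plan is to exploit the structural fact that, because the sampled levels $\{l_i\}_{i\geq 1}$ are mutually independent and each realized cost $C_{l_i}$ is a deterministic function of $l_i$ alone (by the two-sided bound $\underline{c_c}2^{l\gamma d}\leq C_l\leq \overline{c_c}2^{l\gamma d}$), the total cost $W_j=\sum_{i=1}^j C_{l_i}$ is a sum of independent random variables. Hence mean and variance both decouple: $\E[W_j]=\sum_{i=1}^j \E[C_{l_i}]$ and $\V[W_j]=\sum_{i=1}^j \V[C_{l_i}]$. Since $\varrho(tol)^2 = \V[W(tol)]/(\E[W(tol)])^2$, the whole argument reduces to estimating these two sums as functions of the stopping index $j$ and then substituting $j\sim tol^{-2}$, which is valid for $\eta\geq 2$ because Lemma \ref{lemma:assumptionconvergenceRand} then gives $a_j\lesssim j^{-1}$.

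First I would pin down $\E[W_j]$. Writing $P_i=\sum_{k=0}^{L_i}2^{-k\frac{2r+2+\gamma d}{2}}$, which is bounded above and below by positive constants since $2r+2>\gamma d$, the pmf \eqref{probab} yields $\E[C_{l_i}]=\sum_{k=0}^{L_i}C_k p_k^i \sim \frac{1}{P_i}\sum_{k=0}^{L_i}2^{-k\frac{2r+2-\gamma d}{2}}$, a convergent geometric sum with positive limit. Thus $\E[C_{l_i}]$ is bounded between two positive constants (the lower bound comes from the $\underline{c_c}$ hypothesis and the $k=0$ term), so that $\E[W_j]\sim j$, recovering the cost estimate already obtained in Theorem \ref{corr:complexity2Rand}.

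The heart of the proof is the variance. Using the crude bound $\V[C_{l_i}]\leq \E[C_{l_i}^2]=\sum_{k=0}^{L_i}C_k^2 p_k^i \lesssim \frac{1}{P_i}\sum_{k=0}^{L_i}2^{2k\gamma d-k\frac{2r+2+\gamma d}{2}}=\sum_{k=0}^{L_i}2^{k\frac{3\gamma d-(2r+2)}{2}}$, the behaviour is governed by the sign of $3\gamma d-(2r+2)$. When $2r+2>3\gamma d$ the series converges, so $\V[C_{l_i}]=O(1)$ and $\V[W_j]\lesssim j$; the borderline $2r+2=3\gamma d$ gives $\V[C_{l_i}]\lesssim L_i\sim \log i$, hence $\V[W_j]\lesssim j\log j$. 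In both subcases $\varrho^2\lesssim (\log j)/j\to 0$ for every admissible $\eta$, proving the first assertion. When $\gamma d<2r+2<3\gamma d$ the sum is dominated by its top term $2^{L_i\frac{3\gamma d-(2r+2)}{2}}$; the bias law $C(\uu)h_{L_i}^{2r+2}\sim \overline{\epsilon}_0^2 i^{1-\eta}$ forces $2^{L_i}\sim i^{\frac{\eta-1}{2r+2}}$, so $\V[C_{l_i}]\lesssim i^{\rho}$ with $\rho=\frac{(\eta-1)(3\gamma d-(2r+2))}{2(2r+2)}$, whence $\V[W_j]\lesssim j^{\rho+1}$ and $\varrho^2\lesssim j^{\rho-1}$. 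This tends to zero exactly when $\rho<1$, which rearranges to $\eta<\frac{3\gamma d+(2r+2)}{3\gamma d-(2r+2)}$, the stated admissible range.

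Finally, since $\varrho(tol)\lesssim j^{(\rho-1)/2}\sim tol^{1-\rho}$ with $\rho$ strictly increasing in $\eta$ in this last regime, the provable decay is fastest at the left endpoint $\eta=2$, which is the sense in which $\varrho(tol)$ is minimized there. Plugging $\eta=2$ gives $\rho=\frac{3\gamma d-(2r+2)}{2(2r+2)}$ and $1-\rho=\frac{3(2r+2-\gamma d)}{2(2r+2)}$, precisely the claimed rate $\varrho(tol)\lesssim tol^{\frac{3(2r+2-\gamma d)}{2(2r+2)}}$. The main obstacle I anticipate is the careful bookkeeping of the three case distinctions in the geometric sum and the correct conversion of powers of $2^{L_i}$ into powers of $i$ via the bias decay; the crude step $\V\leq \E[\,\cdot^2\,]$ is harmless because the subtracted $(\E[C_{l_i}])^2=O(1)$ is negligible in exactly the regime where $\E[C_{l_i}^2]$ diverges.
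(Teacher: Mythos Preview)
Your proposal is correct and follows essentially the same route as the paper's proof: you exploit the independence of the per-iteration costs $C_{l_i}$ to decouple $\E[W_j]$ and $\V[W_j]$, lower-bound the mean by a constant times $j$ using the two-sided assumption on $C_l$, upper-bound the variance via $\V[C_{l_i}]\leq \E[C_{l_i}^2]\lesssim \sum_{k=0}^{L_i}2^{k(3\gamma d-(2r+2))/2}$, split into the three cases according to the sign of $3\gamma d-(2r+2)$, convert $2^{L_i}$ into powers of $i$ through the bias relation, and finally substitute $j\sim tol^{-2}$. The exponents you obtain, in particular $\rho=\frac{(\eta-1)(3\gamma d-(2r+2))}{2(2r+2)}$ and the final rate $1-\rho=\frac{3(2r+2-\gamma d)}{2(2r+2)}$ at $\eta=2$, coincide with the paper's computations.
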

\begin{proof}
Let us start by computing the variance of the computational cost $W_j$, after $j$ iterations
\begin{align*}
\V\left[W_j\right]:=\V\left[ \sum_{i=1}^{j}C_{l_i} \right]&= \sum_{i=1}^{j}\V\left[C_{l_i} \right]\\
&\leq \sum_{i=1}^{j} \E\left[ C_{l_i}^2 \right]\\
&=\sum_{i=1}^{j}\sum_{l=0}^{L_i}  C_{l}^2 p_l^i\\
&\lesssim \sum_{i=1}^{j}\sum_{l=0}^{L_i}  2^{2l\gamma d}2^{-l\frac{2r+2+\gamma d}{2}}\\
& \lesssim \sum_{i=1}^{j}\sum_{l=0}^{L_i}  2^{-l\frac{2r+2-3\gamma d}{2}}.
\end{align*}
Observe moreover, that under the assumption $C_l \sim 2^{l \gamma d}$, we have
\begin{align*}
\E\left[W_j\right]&=\sum_{i=1}^{j}\sum_{l=0}^{L_i}  C_{l} p_l^i\\
&\geq \sum_{i=1}^{j} \frac{1}{P_j}\sum_{l=0}^{L_i}\underline{c_c}2^{2l\gamma d}2^{-l\frac{2r+2+\gamma d}{2}}\\
& \geq \frac{\underline{c_c}}{P_{\infty}}j.
\end{align*}
If $2r+2 > 3 \gamma d$, the series $\{ \sum_{l=0}^{L_i}  2^{-l\frac{2r+2-3\gamma d}{2}} \}_i$ is convergent, we end up with
$$\V[W_j] \lesssim j, \quad \forall \eta \in \left[2, \frac{3(2r+2)+\gamma d}{2r+2+\gamma d}\right[.$$
In this case, the squared coefficient of variation $\frac{\V[W_j]}{\E[W_j]^2} \lesssim j^{-1} $ which implies $\varrho^2(tol) \lesssim tol^2$, $\forall \eta \in \left[2, \frac{3(2r+2)+\gamma d}{2r+2+\gamma d}\right[.$\\
If, instead, $2r+2 = 3 \gamma d$, then we have
$$\V[W_j] \lesssim \sum_{i=1}^{j}L_i \leq \sum_{i=1}^{j} \log \left(  i^{\frac{\eta-1}{2r+2}}\right) \leq \frac{\eta-1}{2r+2} (j+1) \log (j+1)$$
and then
  $$\varrho(tol)^2= \frac{\V[W(tol)]}{\E[W(tol)]^2} \lesssim \frac{\eta-1}{2r+2} tol^2 \log(tol^{-1})$$ 
 which is minimized for $\eta=2$.\\
 Finally, when $2r+2 < 3 \gamma d$, we have
 $$\V[W_j] \lesssim \sum_{i=1}^{j}2^{L_i\frac{3 \gamma d-(2r+2)}{2}}\lesssim \sum_{i=1}^{j} i^{\frac{\eta-1}{2r+2}\frac{3 \gamma d-(2r+2)}{2}} \lesssim j^{\frac{\eta-1}{2r+2}\frac{3 \gamma d-(2r+2)}{2}+1}$$
 and we derive
 $$\varrho(tol)^2= \frac{\V[W(tol)]}{\E[W(tol)]^2} \lesssim  tol^{-\frac{\eta-1}{2r+2}(3 \gamma d-(2r+2))+2},$$ 
 which shows that $\displaystyle \lim_{tol \rightarrow 0}\varrho(tol)=0$, $\forall \eta \in [2, \frac{3 \gamma d+(2r+2)}{3 \gamma d-(2r+2)}[$. In particular,  $\varrho(tol)$ is minimized for $\eta=2$, what finishes the proof.
\end{proof}

We present in the following Section a description of the RMLSG algorithm.

\subsection{Implementation of the RMLSG algorithm} \label{implementationRand}
Here we present an effective implementation of the RMLSG algorithm\\ \\ 
\begin{algorithm}[H]
\caption{Randomized MLSG algorithm}
 \KwData{}
 Choose $\tau_0>\frac{2}{l}, h_0, \overline{\epsilon}_0$,\\
 generate the sequence $L_j=\left\lceil \frac{-1}{\log(2)} \log\left( \frac{1}{h_0}\left(\frac{\overline{\epsilon}_0^2 j^{-1}}{ C(\uu)}\right)^{\frac{1}{2r+2}} \right)    \right\rceil \quad j\geq 1$,\\ 
 compute $p_l^{j}=2^{-l \frac{2r+2+\gamma d}{2}}\left(\sum_{k=0}^{L_j}2^{-k \frac{2r+2+\gamma d}{2}}\right)^{-1} \quad j\geq 1, \quad l=0, \dots, L_j.$\\
\textbf{initialize } $u=0$\;
 \For{$j\geq 1$}{
generate one iid realization of the random field $a_{j}=a(\cdot, \omega_{j})$, $j\geq 1$.\\
sample $l_j \sim \{ p_l^j \}_{l=0}^{L_j}$ on $\{ 0, \dots, L_j \}$\\
 solve primal problem by FE on mesh $h_{l_j-1}$ and realization\\ $a_{j} \rightarrow y^{h_{l_j-1}}(a_{j},u)$ \\
 solve dual problem by FE on mesh $h_{l_j-1}$ and realization\\ $a_{j}\rightarrow p^{h_{l_j-1}}(a_{j},y^{h_{l_j-1}})$\\
 solve primal problem by FE on mesh $h_{l_j}$ and realization\\ $a_{j}\rightarrow y^{h_{l_j}}(a_{j},u)$ \\
 solve dual problem by FE on mesh $h_{l_j}$ and realization\\ $a_{j}\rightarrow p^{h_{l_j}}(a_{j},y^{h_{l_j}})$\\
$\widehat{\nabla J} \leftarrow \beta u+ \frac{1}{p_{l_j}^j}\left( p^{h_{l_j}}(a_{j},u)- p^{h_{{l_j}-1}}(a_{j},u)\right)$\\\
 $u \leftarrow u-\frac{\tau_0}{j} \widehat{\nabla J}$\\
 }
\label{algMLSGRand}
\end{algorithm}

Algorithm \ref{algMLSGRand} requires estimating the constant $C(\uu)$ which can be done in the same way as proposed in Section \ref{implementation}. Notice that overall this randomized version of the MLSG algorithm has less parameters to tune than the non-randomized one.

\section{Numerical results}
\label{numer}
\subsection{Problem setting}
In this section we verify the assertions on the order of convergence and computational complexity stated in Lemmas \ref{lemma:assumptionconvergence}, \ref{lemma:assumptionconvergenceRand} and Theorem \ref{corr:complexity2}, \ref{corr:complexity2Rand} for the MLSG Algorithm \ref{alg:algMLSG} and the RMLSG Algorithm \ref{algMLSGRand}, respectively. For this purpose, we consider the optimal
control problem \eqref{eqn:mother1} in the domain
$D=(0,1)^2$ with $g=1$ and the following random diffusion coefficient:
\begin{multline}
\label{rf}
a(x_1,x_2,{\xi})=1+ \\ \exp\left(var\left(\xi_1 \cos(1.1 \pi x_1)+\xi_2 \cos(1.2 \pi x_1)+\xi_3 \sin(1.3 \pi x_2)+ \xi_4 \sin(1.4 \pi x_2)\right)\right),
\end{multline}
with $(x_1,x_2) \in D$, $var=\exp(-1.125)$ and $\xi = (\xi_1,\dots, \xi_4)$ with
$ \xi_i \overset{iid}{\sim} \mathcal{U}([-1,1])$ (this test case is taken from \cite{LeeGunzburger}). We have chosen $\beta=10^{-4}$ as the price of energy (regularization parameter) in the objective functional, and $z_d(x,y)=\sin(\pi x) \sin( \pi y)$. For the FE approximation,
we have considered a structured triangular grid of mesh size $h$ where each side of the domain $D$
is divided into $1/h$ sub-intervals and used piece-wise linear finite elements (i.e. $r=1$). All calculations have been performed using the FE library Freefem++ \cite{MR3043640}.

\subsection{Reference solution} In order to compute one reference solution $u_{ref}$, we used a tensorized Gauss-Legendre quadrature formula with $q=5$ Gauss-Legendre knots in each of the 4 random variables $\xi_1, \dots, \xi_4$ in \eqref{rf}, hence $n=5^4$ knots in total, in order to approximate the expectation $\E$ in the objective functional. We discretized the OCP \eqref{eqn:mother1}, using a FE method with $\mathbb{P}_1$ elements (i.e. $r=1$), over a regular triangulation of the domain $D$, with a discretization parameter $h=2^{-7}$. To compute one optimal control, we used a full gradient strategy, (requiring at each iteration to solve $2 \times 5^4$ discretized PDE) up to $20$ iterations, using an adaptive (optimal in our quadratic setting) step-size. We reached a final gradient norm of $\|\nabla \widehat{J} (u_{20})\|=6.54276e-12$ and a final difference between two consecutive controls of $\| u_{20}-u_{19} \|=5.05678e-09$.

\subsection{MLSG algorithm}

In order to assess the convergence rate of the MLSG Algorithm \ref{alg:algMLSG} and its computational complexity, we run 10 independent realizations of the MLSG algorithm, up to  120 iterations, using a step size $\tau_j=\tau_0/(j+10)$, and the following parameters:
 $\tau_0=2/\beta$, $h_0=2^{-3}$, $\epsilon_0=\sqrt{C(\uu)h_0^{2r+2}}$, $\sigma_0=\sqrt{\left( 2 \tau_0+\frac{2}{l} \right) \frac{\epsilon_0^2}{2 \tau_0}}$, $\eta=\frac{2(2r+2)-\gamma d}{2r+2-\gamma d}$, $r=1$, $d=2$, $\gamma=1$, $l=2 \beta$ (see Lemma \ref{lemma:SC}).
The constant $C(\uu)$ has been estimated in \cite{MartinSAGA} for the same test case. Here we have taken $C(\uu)=0.5$. These parameters have been used in Algorithm \ref{alg:algMLSG} to determine the levels $L_j$ and samples per level $N_{j,l}$, at each iteration. We report in Table \ref{tab:levelrep} the levels $L_j$ and the corresponding mesh sizes over the iterations 
\begin{table*}\centering
\begin{tabular}{@{}r|c|c|c|c@{}}\toprule
$j: iteration$ & $\{ 0, \dots, 1 \}$ &$\{ 2, \dots, 13 \}$ &$\{ 14, \dots, 62 \}$ &$\{ 63, \dots, 100 \}$ \\
\hline \hline
$L_j$ & 0 & 1 & 2 & 3 \\
$h_{L_j}$ & $2^{-3}$ & $2^{-4}$ & $2^{-5}$ & $2^{-6}$\\
\bottomrule
\end{tabular} 
\caption{One example of level refinement over the iterations for Algorithm \ref{alg:algMLSG}}\label{tab:levelrep}
\end{table*}
In Figure \ref{isocurves1}, we plot the mean error on the control, $\E[\| u_j-u_{ref} \|]$, averaged over the 10 repetitions of the MLSG procedure, versus the iteration counter in $\log$ scale. We verify a slope of $-1.09$, which is consistent with the result $MSE=O(j^{1-\eta})$ stated in Lemma \ref{lemma:assumptionconvergence} with $\eta=3$.
\begin{figure}[!ht]
\centering
\includegraphics[width = 0.9\textwidth]{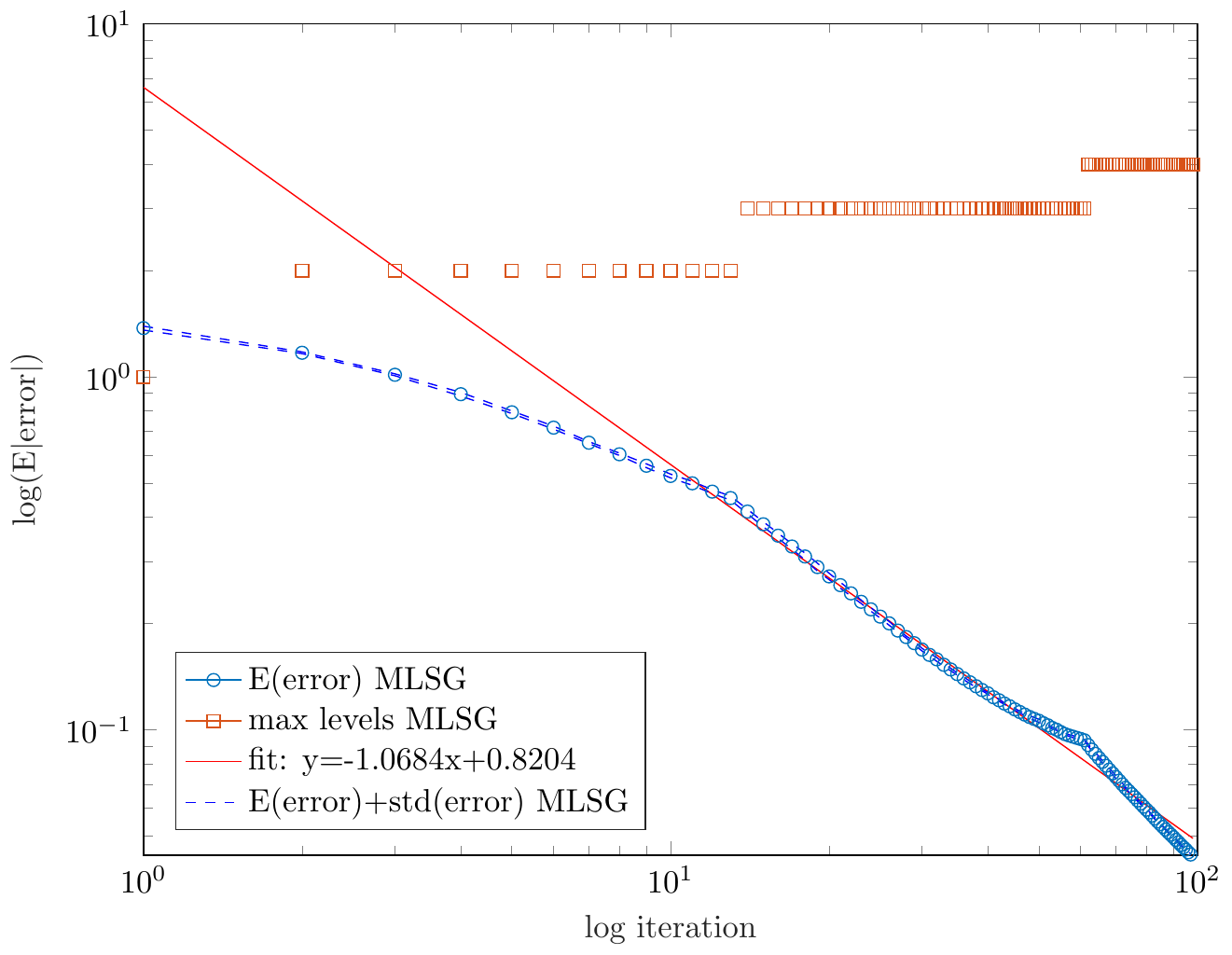}
\caption{Mean error vs iteration counter for the MLSG Algorithm \ref{alg:algMLSG}.}
\label{isocurves1}
\end{figure}
Figure \ref{isocurves2} shows the estimated mean error, averaged over the 10 repetitions, versus the computational cost model $W_j=\sum_{i=0}^{j}\sum_{l=0}^{L_i}2^{l \gamma d}N_{l,i}$, which confirms the complexity result of Theorem \ref{corr:complexity2}.
\begin{figure}[!ht]
\centering
\includegraphics[width = 0.9\textwidth]{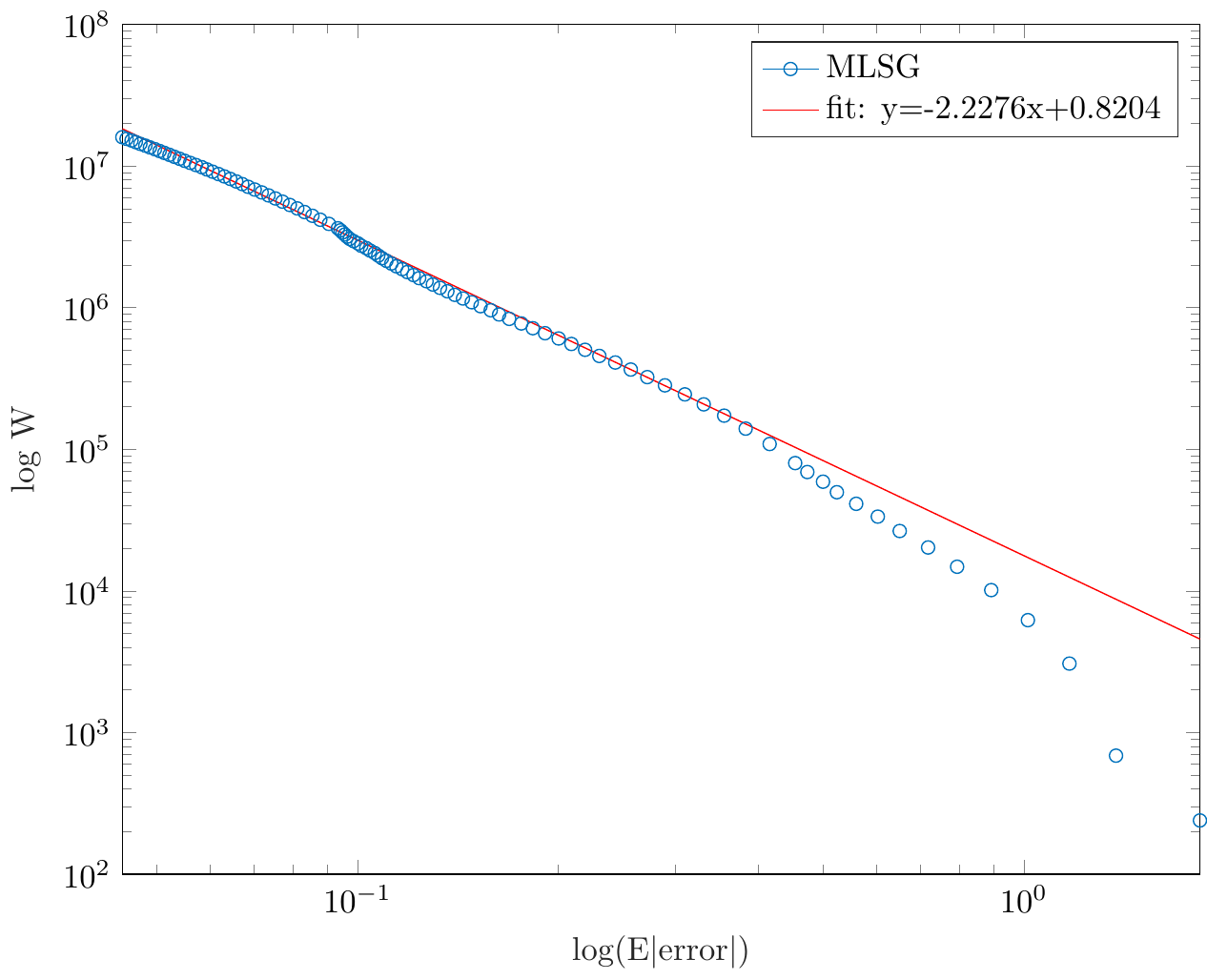}
\caption{Cost $W$ versus mean error, using the MLSG Algorithm \ref{alg:algMLSG}.}
\label{isocurves2}
\end{figure}

\subsection{RMLSG algorithm}
Using the randomized version of the MLSG Algorithm \ref{algMLSGRand}, we assess the convergence rate \eqref{rec2tRand} averaging over 20 independent realizations, of the RMLSG algorithm, up to iteration $j=10000$. The problem setting is the same as above and we have used the same parameters $r=1$. $d=2$, $\gamma=1$, $C(\uu)=0.5$, $\tau_j=\tau_0/(j+10)$ with $\tau_0=2/\beta$, $\overline{\epsilon}_0=\sqrt{C(\uu) h_0^{2r+2}}$, $\eta=2$.
We use the the \emph{optimal} probability mass function:
$$p_l^{j}=2^{-l \frac{2r+2+\gamma d}{2}}\left(\sum_{k=0}^{L_j}2^{-k \frac{2r+2+\gamma d}{2}}\right)^{-1} \quad j\geq 1, \quad l=0, \dots, L_j.$$ 
In Figure \ref{isocurves3}, we plot the mean error versus the iteration counter in $\log$-scale and observe a rate $-1/2$ which is consistent with the result in Lemma \ref{lemma:assumptionconvergenceRand} with $\eta=2$.

\begin{figure}[!ht]
\centering
\includegraphics[width = 0.9\textwidth]{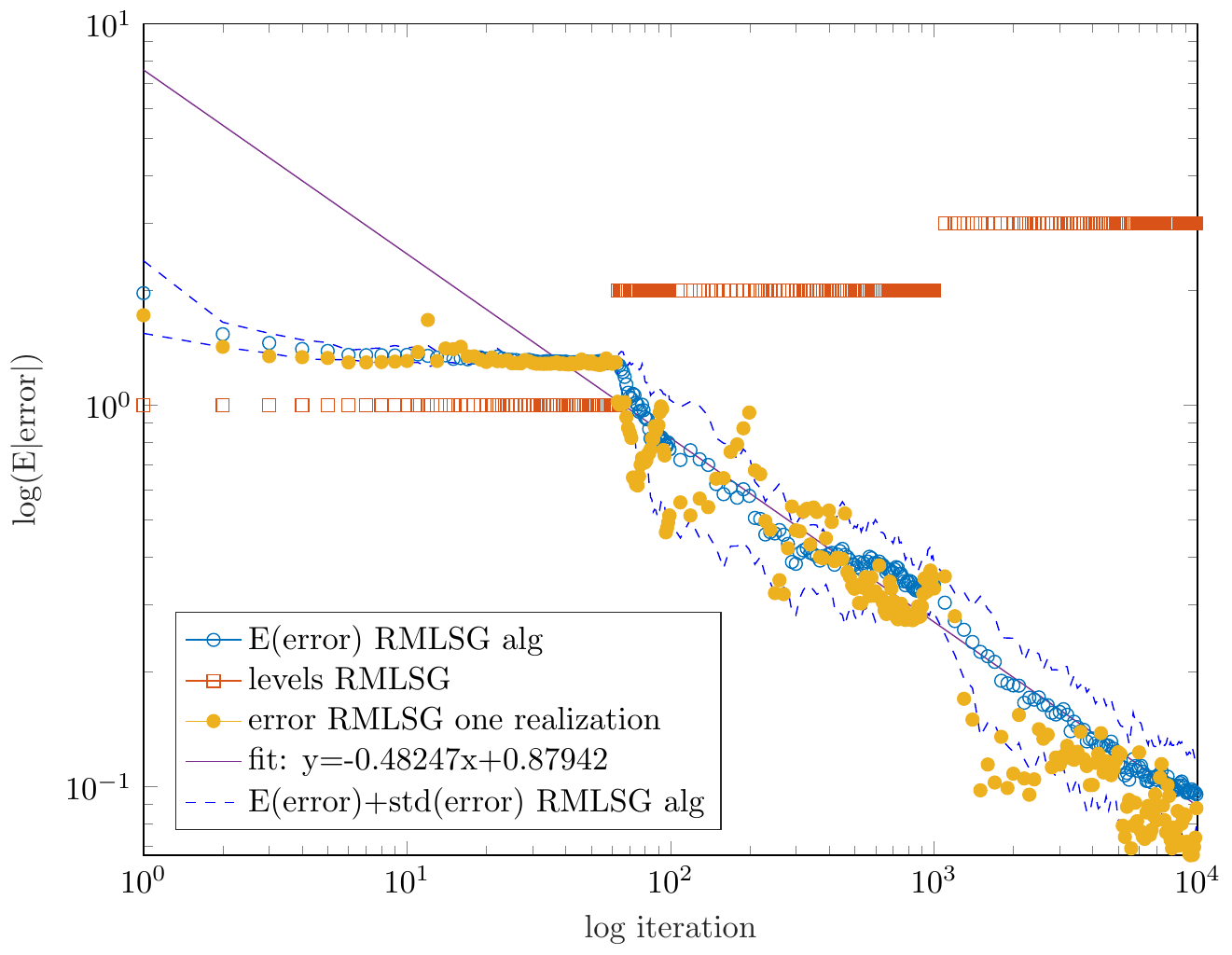}
\caption{Mean error vs iteration counter for the RMLSG Algorithm \ref{algMLSGRand}.}
\label{isocurves3}
\end{figure}
\begin{figure}[!ht]
\centering
\includegraphics[width = 0.9\textwidth]{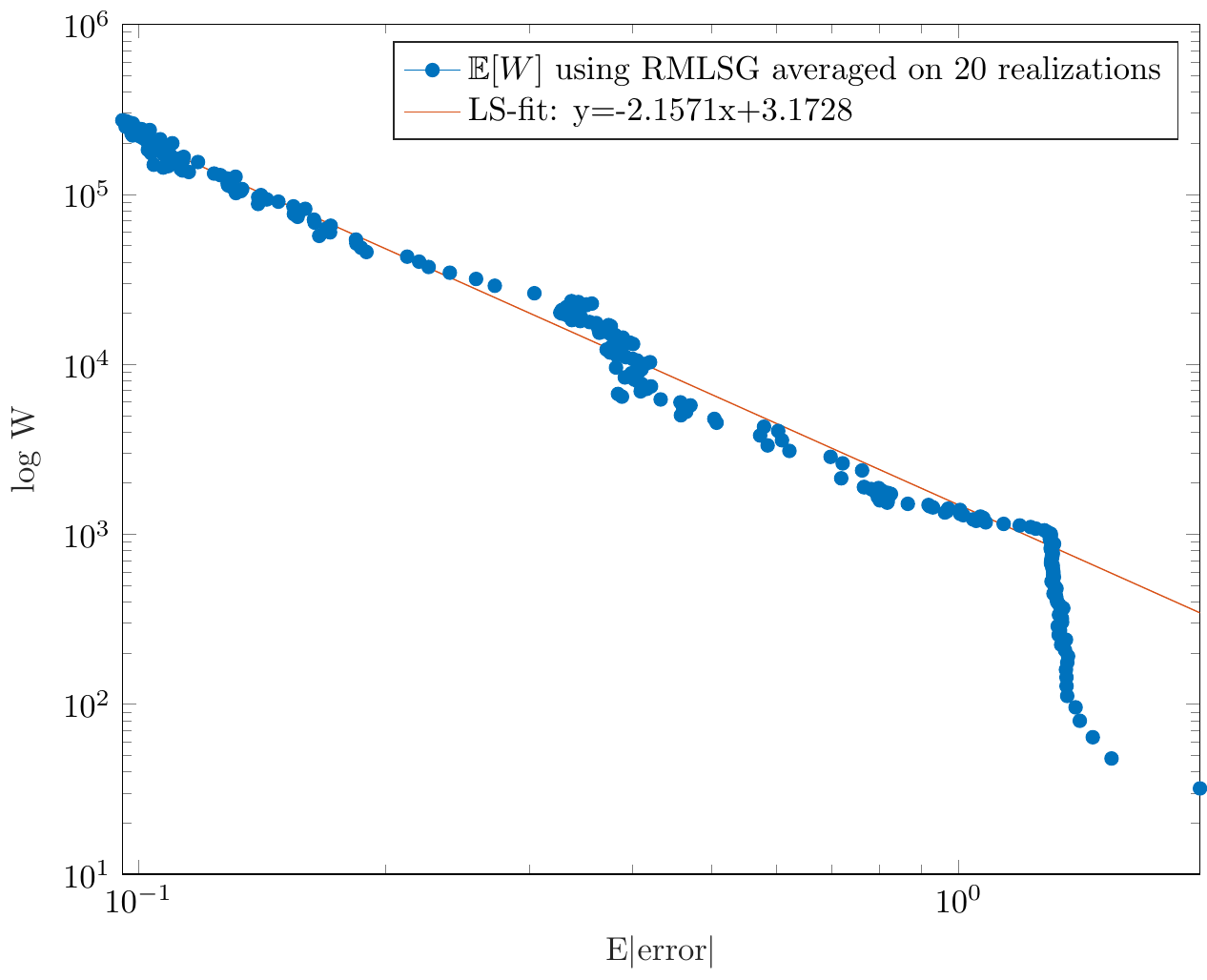}
\caption{Expected cost $\E[W]$ vs mean error for the randomized MLSG Algorithm \ref{algMLSGRand}.}
\label{isocurves4}
\end{figure}
Figure \ref{isocurves4} shows the \emph{expected} computational cost model $\E[W_j]=\sum_{i=1}^{j}\sum_{k=0}^{L_i}2^{k \gamma d} p_k^{i}$ versus the actual mean error averaged over the 20 repetitions and verify a slope of -2 consistent with the complexity result in Theorem \ref{corr:complexity2Rand} (with $\eta=2$). The discontinuities in the expected computational cost in Figure \ref{isocurves4} are due to the fact that the expected cost is not an increasing function of the iteration counter. Specifically, as the probability mass function $\{ p_l^j \}$ is normalized by its sum $\sum_{k'=0}^{L_j}2^{-k' \frac{2r+2+\gamma d}{2}}$, when we reach iteration $j$ where the maximum level $L_j$ is increased the  by 1, we observe slight lower expected cost, i.e. $\E[W(\Er)]=\sum_{k=0}^{L_j}2^{k \gamma d} p_k^{j}=\sum_{k=0}^{L_j} 2^{-k \frac{2r+2-\gamma d}{2}}\left(\sum_{k'=0}^{L_j}2^{-k' \frac{2r+2+\gamma d}{2}}\right)^{-1}$ is not monotonic in $j$.


\section{Conclusions} \label{ccl}
In this work, we presented a modified version of the Stochastic Gradient algorithm, in order to solve numerically a PDE-constrained OCP, with uncertain coefficients. The usual Robbins-Monro approach, involving a single realization estimator of the gradient is replaced by either a MLMC estimator, with increasing cost w.r.t. the iteration counter, or a randomized version of the MLMC estimator, where only one difference term of the full MLMC estimator is computed at each iteration, on a randomly drawn level, according to a probability mass function, set a priori. We have shown that both algorithms, when properly tuned, achieve the optimal complexity $W \lesssim tol^{-2}$ in certain cases. 
These complexity results are assessed in the numerical Section. In practice, many constants have to be tuned beforehand in these two MLSG algorithms. Our preference goes to the randomized version RMLSG as it presents fewer parameters to tune. 


\bibliographystyle{siamplain}
\bibliography{bib}
\end{document}